\numberwithin{equation}{section}
\theoremstyle{plain}
\newtheorem{theorem}{Theorem}[section]
\newtheorem{corollary}[theorem]{Corollary}
\newtheorem{lemma}[theorem]{Lemma}
\newtheorem{proposition}[theorem]{Proposition}
\newtheorem{conjecture}[theorem]{Conjecture}
\theoremstyle{definition}
\newtheorem{remark}[theorem]{Remark}
\theoremstyle{remark}
\newcommand{\OO}{\mathcal O}
\newcommand{\A}{\mathbb{A}}
\newcommand{\R}{\mathbb{R}}
\newcommand{\G}{\mathbb{G}}
\newcommand{\Q}{\mathbb{Q}}
\newcommand{\Z}{\mathbb{Z}}
\newcommand{\C}{\mathbb{C}}
\renewcommand{\H}{\mathbb{H}}
\newcommand{\F}{\mathbb{F}}
\newcommand{\D}{\mathbb{D}}
\newcommand{\zxz}[4]{\begin{pmatrix} #1 & #2 \\ #3 & #4 \end{pmatrix}}
\newcommand{\abcd}{\zxz{a}{b}{c}{d}}
\newcommand{\kzxz}[4]{\left(\begin{smallmatrix} #1 & #2 \\ #3 & #4\end{smallmatrix}\right) }
\newcommand{\kabcd}{\kzxz{a}{b}{c}{d}}
\newcommand{\calC}{\mathcal{C}}
\newcommand{\calE}{\mathcal{E}}
\newcommand{\calF}{\mathcal{F}}
\newcommand{\calL}{\mathcal{L}}
\newcommand{\calN}{\mathcal{N}}
\newcommand{\calO}{\mathcal{O}}
\newcommand{\calP}{\mathcal{P}}
\newcommand{\calS}{\mathcal{S}}
\newcommand{\calX}{\mathcal{X}}
\newcommand{\calZ}{\mathcal{Z}}
\newcommand{\fraka}{\mathfrak a}
\newcommand{\frake}{\mathfrak e}
\newcommand{\frakn}{\mathfrak n}
\newcommand{\eps}{\varepsilon}
\newcommand{\bs}{\backslash}
\newcommand{\norm}{\operatorname{N}}
\newcommand{\pr}{\operatorname{pr}}
\newcommand{\vol}{\operatorname{vol}}
\newcommand{\tr}{\operatorname{tr}}
\newcommand{\Cl}{\operatorname{Cl}}
\newcommand{\Sl}{\operatorname{SL}}
\newcommand{\Gl}{\operatorname{GL}}
\newcommand{\GSpin}{\operatorname{GSpin}}
\newcommand{\CT}{\operatorname{CT}}
\newcommand{\Mp}{\operatorname{Mp}}
\newcommand{\Orth}{\operatorname{O}}
\newcommand{\Hom}{\operatorname{Hom}}
\newcommand{\Aut}{\operatorname{Aut}}
\newcommand{\Char}{\operatorname{char}}
\newcommand{\Mat}{\operatorname{Mat}}
\newcommand{\Spec}{\operatorname{Spec}}
\newcommand{\End}{\operatorname{End}}
\newcommand{\GL}{\operatorname{GL}}
\newcommand{\SO}{\operatorname{SO}}
\newcommand{\Gal}{\operatorname{Gal}}
\newcommand{\supp}{\operatorname{supp}}
\newcommand{\fh}{\operatorname{ht}}
\newcommand{\dv}{\operatorname{div}}
\newcommand{\z}{\operatorname{Z}}
\newcommand{\za}{\operatorname{\widehat{Z}}}
\newcommand{\rata}{\operatorname{\widehat{Rat}}}
\newcommand{\cha}{\operatorname{\widehat{CH}}}
\newcommand{\diag}{\operatorname{diag}}
\newcommand{\ord}{\operatorname{ord}}
\newcommand{\kay}{k}
\newcommand{\Gspin}{\operatorname{GSpin}}
\newcommand{\boldbeta}{\text{\boldmath$\beta$\unboldmath}}
\newcommand{\ff}{\hbox{if }}
\newcommand{\SL}{\operatorname{SL}}
\begin{document}

\title[Faltings heights of CM cycles and derivatives of  $L$-functions]{Faltings heights of CM cycles and derivatives of  $L$-functions}

\author[Jan H.~Bruinier and Tonghai Yang]{Jan
Hendrik Bruinier and Tonghai Yang}
\address{Fachbereich Mathematik,
Technische Universit\"at Darmstadt, Schlossgartenstrasse 7, D--64289
Darmstadt, Germany}
\email{bruinier@mathematik.tu-darmstadt.de}
\address{Department of Mathematics, University of Wisconsin Madison, Van Vleck Hall, Madison, WI 53706, USA}
\email{thyang@math.wisc.edu} \subjclass[2000]{11G15, 11F41, 14K22}

\thanks{The second author is partially supported by grants NSF DMS-0555503
and NSFC-10628103.}

\subjclass[2000]{11G18, 14G40, 11F67}

\date{\today}

\begin{abstract}
  We study the Faltings height pairing of arithmetic Heegner divisors
  and CM cycles on Shimura varieties associated to orthogonal groups.
  We compute the Archimedian contribution to the height pairing and
  derive a conjecture relating the total pairing to the central
  derivative of a Rankin $L$-function.  We prove the conjecture in
  certain cases where the Shimura variety has dimension $0$, $1$, or
  $2$.  In particular, we obtain a new proof of the Gross-Zagier
  formula.
\end{abstract}

\maketitle

\section{Introduction}

Let $E$ be an elliptic curve over $\Q$. Assume that its $L$-function $L(E,s)$ has an odd functional equation so that the central critical value $L(E,1)$ vanishes. In this case the Birch and Swinnerton-Dyer conjecture predicts the existence of a rational point of infinite order on $E$. It is natural to ask if is possible to construct such a point explicitly.
The celebrated work of Gross and Zagier \cite{GZ} provides such a construction when $L'(E,1)\neq 0$. We briefly recall their main result, the Gross-Zagier formula, in a formulation which is convenient for the present paper.

Let $N$ be the conductor of $E$, and let $X_0(N)$ be the moduli space of cyclic isogenies  of degree $N$ of generalized elliptic curves. Let $K$ be an imaginary quadratic field such that $N$ is the norm of an integral ideal of $K$, and write $D$ for the discriminant of $K$.
We may consider the divisor $Z(D)$ on $X_0(N)$ given by elliptic curves with complex multiplication by the maximal order of $K$. By the theory of complex multiplication, this divisor is defined over $K$, and its degree $h$ is given by the class number of $K$. Hence the divisor $y(D)=\tr_{K/\Q }(Z(D)-h\cdot (\infty))$ has degree zero and is defined over $\Q$.
By means of the work of Wiles et al.~\cite{Wi}, \cite{BCDT}, we obtain a rational point $y^E(D)$ on $E$ using a modular parametrization $X_0(N)\to E$.
The Gross-Zagier formula states that the canonical height of $y^E(D)$ is given by the derivative of the $L$-function of $E$ over $K$ at $s=1$, more precisely
\[
\langle y^E(D),y^E(D)\rangle_{NT}
=C \sqrt{|D|} L'(E,1)L(E,\chi_D,1).
\]
Here $C$ is an explicit non-zero constant which is independent of $K$, and $L(E,\chi_D,s)$ denotes
the quadratic twist of $L(E,s)$ by the quadratic Dirichlet character $\chi_D$ corresponding to $K/\Q$.
 It is always possible to choose $K$ such that $L(E,\chi_D,1)$ is non-vanishing. So $y^E(D)$ has infinite order if and only if $L'(E,1)\neq 0$.

The work of Gross and Zagier triggered a lot of further research on height pairings of algebraic cycles on Shimura varieties. For instance,
Zhang considered heights of Heegner type cycles on Kuga-Sato fiber varieties over modular curves in \cite{Zh1}, and
the heights of Heegner points on compact Shimura
curves over totally real fields in \cite{Zh2}.
Gross and Keating discovered a connection between arithmetic intersection numbers of Hecke correspondences on
the product of two copies of the modular curve $X(1)$ over $\Z$ and the coefficients of the derivative of
the Siegel Eisenstein series of genus three and weight two \cite{GK}. This inspired an extensive program of
Kudla, Rapoport and Yang relating Arakelov intersection numbers on Shimura varieties of orthogonal type to derivatives of Siegel Eisenstein series and modular $L$-functions, see e.g.~\cite{Ku2}, \cite{Ku:MSRI}, \cite{KRY2}.

In these works the connection between a height pairing and the derivative of an automorphic $L$-function comes up in a rather indirect way. The idea is to identify the local height pairings in the Fourier coefficients of a suitable integral kernel function (often given by an Eisenstein series), which takes an automorphic form $\phi$ to the special value of the derivative of an $L$-function associated to $\phi$.

In the present paper we consider a different approach to obtain
identities between certain height pairings on Shimura varieties of
orthogonal type and derivatives of automorphic $L$-functions. It is
based on the Borcherds lift \cite{Bo1} and its generalization in
\cite{Br}, \cite{BF}. We propose a conjecture for the Faltings
height pairing of arithmetic Heegner divisors and CM cycles. We
compute the Archimedian contribution to the height pairing. Using
this result we prove the conjecture in certain low dimensional
cases. 
We now describe the content of this paper in more detail.

\medskip


Let $(V,Q)$ be a quadratic space over $\Q$ of signature $(n,2)$,
and let $H=\GSpin(V)$. We realize the hermitian  symmetric space
corresponding to $H(\R)$ as the Grassmannian $\D$ of oriented
negative definite two-dimensional subspaces of $V(\R)$. For a
compact open subgroup $K\subset H(\A_f)$ we consider the Shimura
variety
\begin{align*}
X_K=H(\Q) \bs \big( \D\times H(\A_f)  / K\big).
\end{align*}
It is a quasi-projective variety of dimension $n$, which is defined over $\Q$.

We define CM cycles on $X_K$ following \cite{Scho}.
Let $U\subset V$ be a negative definite
two-dimensional {\em rational} subspace of $V$. It determines a two point
subset $\{z_U^\pm\}\subset \D$ given by $U(\R)$ with the two possible choices of orientation.
Let $V_+\subset V$ be the orthogonal
complement of $U$.
Then $V_+$ is a positive definite
subspace of dimension $n$, and we have the rational splitting
$V=V_+\oplus U$.
Let $T=\GSpin(U)$, which we view as a subgroup of $H$ acting trivially
on $V_+$,  and put $K_T=K\cap T(\A_f)$. We obtain the CM cycle
\begin{align*}
Z(U)=T(\Q)\bs \big( \{ z_U^\pm\} \times T(\A_f)/K_T\big)
\longrightarrow X_K.
\end{align*}

We aim to compute the Faltings height pairing of $Z(U)$ with arithmetic Heegner divisors on $X_K$ that are constructed
by means of a regularized theta lift.
We use a similar setup as in \cite{Ku4}.
Let $L\subset V$ be an even lattice, and write $L'$ for the dual of $L$.
The discriminant group $L'/L$ is finite. We consider the
space $S_L$ of Schwartz functions on $V(\A_f)$ which are
supported on $L'\otimes \hat \Z$ and which are constant on cosets of
$\hat{L}=L\otimes \hat \Z$. The characteristic functions $\phi_\mu=\Char(\mu+\hat L)$ of the cosets $\mu\in L'/L$ form a basis of $S_L$.
We write $\Gamma'=\Mp_2(\Z)$ for the full inverse image of
$\Sl_2(\Z)$ in the two fold metaplectic covering
of $\Sl_2(\R)$.
Recall that there is a Weil representation $\rho_L$ of $\Gamma'$ on $S_L$, see \eqref{eq:rhol}.


Let $k\in \frac{1}{2}\Z$. We write $M^!_{k,\rho_L}$ for the space of $S_L$-valued
weakly holomorphic modular forms  of weight $k$ for $\Gamma'$
with representation $\rho_L$.
Recall that weakly holomorphic modular forms are those meromorphic modular forms whose poles are supported at the cusps.
The space of weakly holomorphic modular forms is contained in the space $H_{k,\rho_L}$ of harmonic weak Maass forms of weight $k$ for $\Gamma'$
with representation $\rho_L$  (see Section \ref{sect:3} for precise definitions).
An element $f\in H_{k,\rho_L}$ has a Fourier expansion of the form
\begin{align*}
f(\tau)= \sum_{\mu\in L'/L}\sum_{\substack{n\in \Q\\ n\gg-\infty}} c^+(n,\mu) q^n\phi_\mu
+\sum_{\mu\in L'/L} \sum_{\substack{n\in \Q\\
n< 0}} c^-(n,\mu) \Gamma(1-k, 4\pi |n| v) q^n \phi_\mu,
\end{align*}
where $\Gamma(a,t)$ denotes the incomplete Gamma function, and $v$ is the imaginary part of  $\tau\in \H$.
Note that $c^\pm (n,\mu)=0$ unless $n\in Q(\mu)+\Z$, and that there are only finitely many $n<0$ for which $c^+(n,\mu)$ is non-zero.
There is an antilinear  differential operator $\xi:H_{k,\rho_L}\to S_{2-k,\bar\rho_L}$
to the space of cusp forms of weight $2-k$ with dual representation.
It is surjective and its kernel is equal to $M^!_{k,\rho_L}$.

Assume that $K\subset H(\A_f)$ acts trivially on $L'/L$.
Recall that for any $\mu\in L'/L$ and for any positive $m\in Q(\mu)+\Z$ there is a Heegner divisor
$Z(m,\mu)$ on $X_K$, see Section~\ref{sect4}. An arithmetic divisor on $X_K$ is a pair $(x,g_x)$
consisting of a divisor $x$ on $X_K$ and a Green function $g_x$ of logarithmic type for $x$. For the divisors $Z(m,\mu)$ we obtain such Green functions
by means of the regularized theta lift of harmonic weak Maass forms.
For $\tau\in \H$, $z\in \D$ and $h\in H(\A_f)$, let $\theta_L(\tau,z,h)$ be the Siegel theta function associated to the lattice $L$.
Let $f\in H_{1-n/2,\bar\rho_L}$ be a harmonic weak Maass form of
weight $1-n/2$,
and denote its Fourier expansion as above.
We consider
the regularized theta integral
\begin{align*}
\Phi(z,h,f)=\int_{\calF}^{reg} \langle f(\tau),
\theta_L(\tau,z,h)\rangle\,d\mu(\tau).
\end{align*}
This theta lift was studied in \cite{Br}, \cite{BF},
generalizing the Borcherds lift of weakly holomorphic modular forms \cite{Bo1}.
It turns out that $\Phi(z,h,f)$  is a logarithmic
Green function for the divisor
\[
Z(f)=\sum_{\mu\in L'/L}\sum_{m>0} c^+ (-m,\mu)
Z(m,\mu)
\]
in the sense of Arakelov
geometry (see \cite{SABK}).
It is harmonic when $c^+(0,0)=0$. The pair $\hat Z(f)=(Z(f),\Phi(\cdot,f))$
defines an arithmetic divisor on $X_K$.
We obtain a linear map
\[
H_{1-n/2,\bar\rho_L}\longrightarrow \hat\z{}^1(X_K)_\C,\quad f\mapsto \hat Z(f)
\]
to the group of arithmetic divisors on $X_K$.
Using the Borcherds lift \cite{Bo1}, we see that
this map takes weakly holomorphic modular forms with vanishing constant term to arithmetic divisors which are rationally equivalent to zero.

Let $\calX\to \Spec(\Z)$ be a regular
scheme which is projective and flat over $\Z$, of relative
dimension $n$.
An arithmetic divisor on $\calX$ is a pair $(x,g_x)$ of a divisor $x$ on $\calX$ and a logarithmic Green function
$g_x$ for the divisor $x(\C)$  induced by $x$ on the complex variety $\calX(\C)$, see \cite{SABK}.
Recall from \cite{BGS} that there is a height pairing
\[
\cha^1(\calX)\times \z^n(\calX) \longrightarrow \R
\]
between the first arithmetic Chow group of $\calX$ and the group of codimension $n$ cycles.
When $\hat x=(x,g_x)\in \cha^1(\calX)$ and $y\in \z^n(\calX)$ such that $x$ and $y$ intersect properly on the generic fiber,
it is defined by
\[
\langle \hat x, y\rangle_{Fal} = \langle x,y\rangle_{fin} + \langle \hat x,y\rangle_\infty,
\]
where
$\langle \hat x,y\rangle_\infty = \frac{1}{2}g_x(y(\C))$,
 and $\langle x,y\rangle_{fin}$ denotes the intersection
pairing at the finite places. The
quantity $\langle \hat x, y\rangle_{Fal}$ is called the Faltings
height of $y$ with respect to $\hat x$.

We now give a conjectural formula for the Faltings height pairing of arithmetic Heegner divisors and CM cycles
(see Section \ref{sect:fal} for details).
We are quite vague here and ignore
various difficult  technical problems regarding regular models.
Assume that there is a regular scheme $\calX_K\to \Spec\Z$,
projective and flat over $\Z$, whose associated complex variety is a
smooth compactification
of $X_K$. Let $\calZ(m ,\mu)$ and $\calZ(U)$ be suitable
extensions to $\calX_K$ of the cycles $Z(m, \mu)$ and $Z(U)$,
respectively. Such extensions can be found in many cases using a
moduli interpretation of $\calX_K$, see e.g. \cite{Ku:MSRI},
\cite{KRY2}, or by taking flat closures as in \cite{BBK}.
%
For an $f \in H_{1-n/2, \bar{\rho}_L}$, we set $\calZ(f) =\sum_\mu \sum_{m>0} c^+(-m, \mu) \calZ(m,
\mu)$. Then
the pair
\[
\hat \calZ(f)=(\calZ(f),\Phi(\cdot,f))
\]
defines
an arithmetic divisor in $\cha^1(\calX_K)_\C$.
The pairing of this divisor with the CM cycle $\calZ(U)$ should be
given by the central derivative of a certain Rankin type
$L$-function which we now describe.


Using the splitting $V=V_+\oplus U$, we obtain definite lattices
$N=L\cap U$ and $P=L\cap V_+$.
 Let
\begin{align*}
\theta_P(\tau)&=\sum_{\mu\in P'/P} \sum_{m\geq 0} r(m,\mu)
q^m\phi_\mu
\end{align*}
be the Fourier expansion of the $S_P$-valued theta series
associated to the positive definite lattice $P$.
For a cusp form  $g\in S_{1+n/2,\rho_L}$ with Fourier expansion $g=\sum_{\mu} \sum_{m>0} b(m,\mu) q^m\phi_\mu$,
we consider the Rankin type $L$-function
\begin{align}
L(g, U,
s)=(4\pi)^{-(s+n)/2}\Gamma\left(\tfrac{s+n}{2}\right)\sum_{m>0}\sum_{\mu\in
P'/P}r(m,\mu) \overline{b(m,\mu)}  m^{-(s+n)/2},
\end{align}
where $g$ is considered as an $S_{P\oplus N}$-valued cusp form in a natural way (via Lemma \ref{sublattice}).
This $L$-function can be written as a Rankin-Selberg convolution against an incoherent Eisenstein series
$E_N(\tau,s;1)$ of weight $1$ associated to the negative definite lattice $N$, see
Section~\ref{sect:cmgreen}.
Under mild assumptions on $U$, the completed $L$-function
$L^*(g, U, s) :=\Lambda(\chi_D, s+1) L(g, U, s)$
satisfies the functional equation
\[
L^*(g, U, s) =-L^*(g, U, -s).
\]
Consequently, it vanishes at $s=0$, the center of symmetry, and it is of interest to describe the derivative
$L'( g,U,0)$.


\begin{conjecture} \label{conj:intro}
Let $f\in H_{1-n/2, \bar{\rho}_L}$, and assume that the constant term $c^+(0,0)$ of $f$ vanishes. Then
\begin{align}
\label{eq:fhintro} \langle \hat \calZ(f), \calZ(U) \rangle_{Fal}
=\frac{2}{\vol(K_T)}L'( \xi(f),U,0).
\end{align}
\end{conjecture}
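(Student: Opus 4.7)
The plan is to decompose the Faltings height into its Archimedean and finite contributions,
\[
\langle \hat{\calZ}(f),\calZ(U)\rangle_{Fal} \;=\; \langle \calZ(f),\calZ(U)\rangle_{fin}\;+\;\tfrac{1}{2}\Phi(Z(U),f),
\]
and to identify each piece with the corresponding local contribution in the local-global decomposition of $\tfrac{2}{\vol(K_T)}L'(\xi(f),U,0)$, which is naturally available through the weight-one incoherent Eisenstein series $E_N(\tau,s;1)$ defining $L(g,U,s)$.

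First I would compute $\tfrac12\Phi(Z(U),f)$ directly. On a CM point $z_U^\pm$ the rational splitting $V=V_+\oplus U$ forces the Siegel theta function to factor; after replacing $L$ by the finite-index sublattice $P\oplus N$ via Lemma~\ref{sublattice} one has
\[
\theta_L(\tau,z_U^\pm,h)\;=\;\theta_P(\tau)\otimes\theta_N(\tau,h),
\]
with $P=L\cap V_+$ and $N=L\cap U$. Averaging over the components $T(\Q)\bs\{z_U^\pm\}\times T(\A_f)/K_T$ replaces the second tensor factor by the $T$-average $\widehat{\theta}_N(\tau)$. Since $N$ has signature $(0,2)$, $\widehat{\theta}_N$ is incoherent of weight one, and by the Kudla-Rapoport-Yang Siegel-Weil identity it equals $\vol(K_T)^{-1}E'_N(\tau,0;1)$, the central derivative of the Eisenstein series defining $L(g,U,s)$.

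Substituting into the regularized theta integral and applying a Stokes-type unfolding as in \cite{BF}---which exploits the harmonicity of $f$, the vanishing of the incoherent Eisenstein series at $s=0$, and the hypothesis $c^+(0,0)=0$ to suppress the boundary divergence at the cusp---converts
\[
\tfrac12\Phi(Z(U),f)\;=\;\tfrac{1}{2\vol(K_T)}\int_\calF^{reg}\bigl\langle f(\tau),\,\theta_P(\tau)\otimes E'_N(\tau,0;1)\bigr\rangle\,d\mu(\tau)
\]
into the $s$-derivative at $0$ of a Rankin-Selberg pairing of $g=\xi(f)$ against $\theta_P\cdot E_N(\tau,s;1)$. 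By the very definition of $L(g,U,s)$, this is exactly the archimedean part of $\tfrac{2}{\vol(K_T)}L'(\xi(f),U,0)$ in the local decomposition of the derivative.

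The remaining task, and the main obstacle, is to show that $\langle\calZ(f),\calZ(U)\rangle_{fin}$ agrees with the non-archimedean contributions to $\tfrac{2}{\vol(K_T)}L'(\xi(f),U,0)$, that is, with the finite-prime Fourier coefficients of $E'_N(\tau,0;1)$. Proving this in general demands a satisfactory regular integral model $\calX_K$, flat or modular extensions $\calZ(m,\mu)$ and $\calZ(U)$ of the Heegner and CM cycles, and a local computation at each finite prime $p$ matching arithmetic intersection multiplicities with $p$-derivatives of local Whittaker functions in the style of Gross-Keating and Kudla-Rapoport-Yang. Such geometry is currently available only in the low-dimensional cases $n=0,1,2$ treated later in the paper (via CM elliptic curves, modular and Shimura curves, and Hilbert modular or $\GSpin(2,2)$-surfaces), which is exactly why the conjecture is stated in the above generality but proved only there.
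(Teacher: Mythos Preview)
Your overall shape is right---decompose the Faltings height into the Archimedean piece $\tfrac12\Phi(Z(U),f)$ and the finite intersection, and match against $\tfrac{2}{\vol(K_T)}L'(\xi(f),U,0)$---but the way you line up the two sides is not how it actually works, and one of your intermediate claims is incorrect.

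First, the Siegel--Weil step: the $T$-average of $\theta_N$ does \emph{not} give $\vol(K_T)^{-1}E'_N(\tau,0;1)$. Siegel--Weil produces the \emph{value} of the weight $-1$ Eisenstein series,
\[
\int_{\SO(U)(\Q)\backslash \SO(U)(\A_f)}\theta_N(\tau,z_U^\pm,h)\,dh \;=\; E_N(\tau,0;-1),
\]
not a derivative. The derivative enters only after one applies the identity $L_1 E_N'(\tau,0;1)=\tfrac12 E_N(\tau,0;-1)$ inside the regularized integral and integrates by parts via Stokes.

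Second, and more importantly, that Stokes computation does not collapse cleanly to an $L'$ term. It yields \emph{two} pieces: a boundary term $\CT\big(\langle f^+,\theta_P\otimes\calE_N\rangle\big)$, where $\calE_N$ is the holomorphic part of $E_N'(\tau,0;1)$, plus the Petersson integral that gives $L'(\xi(f),U,0)$. Thus
\[
\tfrac12\Phi(Z(U),f)\;=\;\frac{2}{\vol(K_T)}\Big(\CT\big(\langle f^+,\theta_P\otimes\calE_N\rangle\big)+L'(\xi(f),U,0)\Big).
\]
The extra $\CT$ term is built out of the \emph{non-archimedean} Fourier coefficients $\kappa(m,\mu)$ of $E_N'(\tau,0;1)$, so your proposed alignment ``archimedean height $\leftrightarrow$ archimedean part of $L'$'' is off by exactly this quantity. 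The correct mechanism of the conjecture is that the finite intersection $\langle\calZ(f),\calZ(U)\rangle_{fin}$ should equal $-\tfrac{2}{\vol(K_T)}\CT\big(\langle f^+,\theta_P\otimes\calE_N\rangle\big)$ (this is Conjecture~\ref{conj1.3}), so that the $\CT$ term cancels and only $L'$ survives. Your final paragraph is then essentially right: establishing that cancellation coefficient by coefficient is precisely the arithmetic intersection problem solved only for $n\le 2$.
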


In Section \ref{sect4} we compute the Archimedian contribution to the height pairing, see Theorem~\ref{thm:fund}.
\begin{theorem}
\label{thm:fundintro}
The Archimedian height pairing $\langle \hat \calZ(f), \calZ(U)
\rangle_{\infty}$ is given by
\begin{align*}
\frac{1}{2}\Phi(Z(U),f)&=\frac{2}{\vol(K_T)}\left( \CT\left(\langle
f^+,\, \theta_P \otimes \calE_N\rangle\right) +
L'(\xi(f), U,0)\right).
\end{align*}

\end{theorem}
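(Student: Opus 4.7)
The plan is to turn the Archimedean height pairing into a regularized integral on the upper half plane via a seesaw-type identity, and then to evaluate that integral by Rankin--Selberg unfolding. First, starting from $\langle\hat\calZ(f),\calZ(U)\rangle_\infty=\tfrac{1}{2}\Phi(Z(U),f)$ and the integral representation
$$\Phi(z,h,f)=\int_\calF^{reg}\langle f(\tau),\theta_L(\tau,z,h)\rangle\, d\mu(\tau),$$
I would interchange the CM-cycle integration with the regularized theta integral, after verifying convergence at the singular locus where $Z(f)$ meets $Z(U)$. This yields
$$\Phi(Z(U),f)=\frac{1}{\vol(K_T)}\int_\calF^{reg}\bigl\langle f(\tau),\Theta_{U,L}(\tau)\bigr\rangle\, d\mu(\tau),$$
where $\Theta_{U,L}(\tau)$ denotes the integral of $\theta_L(\tau,z,h)$ over the CM cycle $Z(U)$.

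Next, I would exploit the rational splitting $V=V_+\oplus U$ and the finite-index inclusion $P\oplus N\subset L$ provided by Lemma \ref{sublattice}. With respect to this decomposition, $\theta_L$ restricted to $z=z_U^\pm$ factors as a finite sum of tensor products $\theta_P(\tau)\otimes\theta_U^\pm(\tau,h_T)$, where $\theta_P$ is the holomorphic theta series of the positive definite lattice $P$, and $\theta_U^\pm$ is the weight-one Gaussian theta kernel of the negative binary form on $U$. Integrating $\theta_U^\pm$ over $Z(U)$ produces, by a Siegel--Weil type identity in signature $(0,2)$, the incoherent Eisenstein series $E_N(\tau,s;1)$ evaluated at $s=0$. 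Since $E_N$ is incoherent, $E_N(\tau,0;1)=0$, so the integrated theta must be replaced by the first-order deformation
$$\calE_N(\tau):=\frac{d}{ds}\Big|_{s=0}E_N(\tau,s;1),$$
giving $\Theta_{U,L}(\tau)=\theta_P(\tau)\otimes\calE_N(\tau)$ up to an easily tracked constant.

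Then I would evaluate the resulting regularized integral using the decomposition $f=f^++f^-$ of the theorem's setup. Because $f^+$ is holomorphic with only finitely many negative Fourier coefficients and $\calE_N$ has moderate growth, a standard unfolding of $\int_\calF^{reg}\langle f^+,\theta_P\otimes\calE_N\rangle\, d\mu$ selects exactly the constant Fourier coefficient $\CT(\langle f^+,\theta_P\otimes\calE_N\rangle)$. For the $f^-$ contribution, I would apply Stokes' theorem together with the relation $\xi(f)\in S_{1+n/2,\rho_L}$ to convert it into the Petersson pairing of $\xi(f)$ against $\theta_P\otimes E_N(\cdot,s;1)$, and then differentiate at $s=0$. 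Unfolding this Petersson pairing is a standard Rankin--Selberg computation that matches precisely the Dirichlet series and $\Gamma$-factor in the definition of $L(\xi(f),U,s)$; hence its derivative at $s=0$ is $L'(\xi(f),U,0)$. Collecting the factor of $2$ from the two orientations $z_U^\pm$ with the prefactor $\tfrac{1}{2}$ in the Archimedean pairing yields the formula stated in the theorem.

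The main obstacle will be the precise bookkeeping of the two independent regularizations --- the one built into the regularized theta integral $\Phi(\cdot,f)$ and the one arising from the moderate growth of $\calE_N$ at the cusp. One must verify that the interchange of integration in the first step produces no extra contributions along $Z(f)\cap\calZ(U)(\C)$, and carry out a careful Stokes/holomorphic-projection computation in the third step to confirm that the only surviving boundary term is the $\CT$-piece coming from $f^+$. The subtle point is that the Rankin--Selberg unfolding is applied to a harmonic weak Maass form rather than a cusp form, so the discrepancy between the spectral data of $f$ and that of $\xi(f)$ must be absorbed exactly by the constant-term contribution.
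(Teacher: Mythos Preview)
Your overall architecture---seesaw identity, Siegel--Weil, then Stokes/Rankin--Selberg---is the same as the paper's, but step~2 contains a genuine error that breaks the argument. The Siegel--Weil formula for the negative definite plane $U$ (Proposition~\ref{sw2}) identifies the CM-integral of $\theta_N(\tau,z_U^\pm,h)$ with the \emph{coherent} weight~$-1$ Eisenstein series $E_N(\tau,0;-1)$, which is \emph{nonzero}. It does not produce the incoherent weight~$1$ series $E_N(\tau,0;1)$, so your claim that the integrated theta vanishes and ``must be replaced by the first-order deformation'' is incorrect. Consequently $\Theta_{U,L}(\tau)=\theta_P(\tau)\otimes E_N(\tau,0;-1)$, not $\theta_P\otimes\calE_N$, and the rest of your unfolding is applied to the wrong object. (Relatedly, the paper's $\calE_N$ is not $\tfrac{d}{ds}|_{s=0}E_N(\tau,s;1)$ but only its \emph{holomorphic part}; see \eqref{eq:calE}.)

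The missing idea is the Maass-lowering identity \eqref{eis2}, equivalently Lemma~\ref{eis3}:
\[
-2\,\bar\partial\bigl(E_N'(\tau,0;1)\,d\tau\bigr)=E_N(\tau,0;-1)\,d\mu(\tau).
\]
This is precisely what links the coherent Siegel--Weil output $E_N(\tau,0;-1)$ to the derivative of the incoherent series $E_N'(\tau,0;1)$. Once you insert this into the truncated integral $I_T(f)=\int_{\calF_T}\langle f,\theta_P\otimes E_N(\tau,0;-1)\rangle\,d\mu$, a single application of Stokes' theorem (no need to split $f=f^++f^-$ beforehand) produces a boundary line integral at height~$T$ and a bulk integral containing $\overline{\xi(f)}$. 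Taking $T\to\infty$, the boundary term yields $\CT(\langle f^+,\theta_P\otimes\calE_N\rangle)$ after subtracting the $\log T$ regularizing term from Lemma~\ref{limit}, and the bulk integral is exactly the Petersson pairing defining $L'(\xi(f),U,0)$. Your concerns about ``two independent regularizations'' and contributions along $Z(f)\cap Z(U)$ are handled by Lemma~\ref{limit}, which rewrites the regularized integral as a convergent limit of truncated integrals before any of this begins.
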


Here $f^+$ denotes the ``holomorphic part'' of the harmonic weak Maass form $f$ and $\calE_N(\tau)$ is the holomorphic part of the derivative $E_N'(\tau,0;1)$
of the Eisenstein series associated to $N$, see \eqref{eq:calE}.
Moreover, $\CT(\,\cdot\,)$ means the constant term of a holomorphic Fourier series.
In the proof we combine the approach of Kudla and Schofer  to evaluate regularized theta integrals on special cycles (see \cite{Ku4}, \cite{Scho})
with results on harmonic weak Maass forms and automorphic Green functions obtained in \cite{BF}. The basic idea is to view
the evaluation of $\Phi(z,h,f)$ on $Z(U)$ as an integral over $T(\Q)\bs T(\A_f)/K_T$. Then the CM value $\Phi(Z(U),f)$ can be computed using a see-saw identity,
the Siegel-Weil formula, and the properties of the Maass lowering and raising operators on Eisenstein series and harmonic weak Maass forms.

When $f$ is actually weakly holomorphic then $\xi(f)=0$ and
Theorem \ref{thm:fundintro} reduces to the main result of
\cite{Scho}. Moreover, the Borcherds lift of $f$ gives rise to a
relation which shows that the arithmetic divisor $\hat\calZ(f)$ is
rationally equivalent to zero. Hence the Faltings height in
Conjecture \ref{conj:intro} vanishes. Therefore the Archimedian
contribution to the height pairing must equal the negative of the
contribution from the finite places. This leads to a general
conjecture for the finite intersection pairing of $\calZ(m,\mu)$
and $\calZ(U)$ (see Conjecture \ref{conj4.8}) which motivates
Conjecture \ref{conj:intro}:

\begin{conjecture}
\label{conj1.3}
Let $\mu\in L'/L$,  and let $m\in Q(\mu)+\Z$ be
positive. Then $ \langle \calZ(m, \mu), \calZ(U)\rangle_{fin}$ is
equal to $-\frac{2}{\vol(K_T)}$ times the $(m,\mu)$-th Fourier
coefficient of $\theta_P\otimes \calE_N$.
\end{conjecture}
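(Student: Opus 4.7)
The plan is to derive Conjecture~\ref{conj1.3} as a consequence of Theorem~\ref{thm:fundintro} by producing enough rational equivalences in $\cha^1(\calX_K)_\C$ via the Borcherds lift. Let $f \in M^!_{1-n/2, \bar\rho_L}$ be weakly holomorphic with vanishing constant term $c^+(0,0) = 0$. By Borcherds' product theorem \cite{Bo1}, one has $\Phi(z,h,f) = -2\log|\Psi(z,h,f)|$ for a meromorphic modular form $\Psi(\cdot, f)$ on $X_K$ whose generic fibre divisor equals $Z(f)$. Assuming one can choose compatible integral extensions (using a moduli interpretation as in \cite{KRY2} or flat closures as in \cite{BBK}), the arithmetic divisor $\hat\calZ(f) = (\calZ(f), \Phi(\cdot, f))$ is rationally equivalent to zero in $\cha^1(\calX_K)_\C$, so $\langle \hat\calZ(f),\calZ(U)\rangle_{Fal} = 0$.

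This vanishing forces $\langle \calZ(f), \calZ(U)\rangle_{fin} = -\tfrac{1}{2}\Phi(Z(U), f)$. For weakly holomorphic $f$ one has $\xi(f) = 0$, so Theorem~\ref{thm:fundintro} evaluates the right-hand side as $-\tfrac{2}{\vol(K_T)}\CT\langle f^+, \theta_P\otimes\calE_N\rangle$. Writing $\calZ(f) = \sum_{\mu,m>0} c^+(-m,\mu)\calZ(m,\mu)$ and expanding the constant term as $\sum_{\mu,m>0} c^+(-m,\mu)\,a(m,\mu)$ in terms of the Fourier coefficients $a(m,\mu)$ of $\theta_P\otimes\calE_N$, the above identity collapses to
\[
\sum_{\mu,m>0} c^+(-m,\mu)\Bigl[\langle \calZ(m,\mu),\calZ(U)\rangle_{fin} + \tfrac{2}{\vol(K_T)}\,a(m,\mu)\Bigr] = 0
\]
for every admissible weakly holomorphic $f$.

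To extract the coefficient-wise equality claimed in Conjecture~\ref{conj1.3}, one would then want, for each single pair $(m,\mu)$, a weakly holomorphic $f$ whose principal part is supported only at $(m,\mu)$. The main obstacle is that the image of the principal-part map on $M^!_{1-n/2,\bar\rho_L,0}$ is cut out by pairing against the cusp form space $S_{1+n/2,\rho_L}$. Consequently the argument above only pins down the quantity $\langle \calZ(m,\mu),\calZ(U)\rangle_{fin} + \tfrac{2}{\vol(K_T)}\,a(m,\mu)$ modulo the finite-dimensional obstruction space dual to those cusp forms, which is precisely why the statement remains conjectural in general and can be verified unconditionally only when that obstruction space is either trivial or can be controlled by independent input, as one expects in the low-dimensional cases treated in the sequel. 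A secondary technical obstacle is the construction of an integral model $\calX_K$ together with extensions $\calZ(m,\mu), \calZ(U)$ for which the Borcherds relation lifts integrally and for which $\langle \calZ(f),\calZ(U)\rangle_{fin}$ is unambiguously defined; this is where the vagueness about regular models in the introduction will have to be made precise.
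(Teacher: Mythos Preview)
Your heuristic is exactly the paper's own motivation for Conjecture~\ref{conj1.3} (restated more precisely as Conjecture~\ref{conj4.8}): take $f$ weakly holomorphic with $c^+(0,0)=0$, use the Borcherds lift to argue that $\hat\calZ(f)$ should be rationally trivial, combine the vanishing of the Faltings height with Theorem~\ref{thm:fund} (which reduces to Schofer's formula since $\xi(f)=0$), and read off the relation $\sum_{m,\mu} c^+(-m,\mu)\bigl[\langle\calZ(m,\mu),\calZ(U)\rangle_{fin}+\tfrac{2}{\vol(K_T)}a(m,\mu)\bigr]=0$. You are also right that this determines the bracketed quantity only up to the obstruction dual to $S_{1+n/2,\rho_L}$, and that the integral-model issues are genuinely unresolved in this generality. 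All of this matches the paper's discussion in Section~\ref{sect:fal}, which explicitly presents the identity as a \emph{suggestion} for the conjecture rather than a proof.

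Where the paper goes a step further than your write-up is in Lemma~\ref{lem:n11} and the paragraph after Conjecture~\ref{conjY5.2}: instead of staying inside $M^!_{1-n/2,\bar\rho_L}$, one uses a \emph{harmonic weak Maass form} $f_{m,\mu}$ with principal part $\tfrac12(q^{-m}\phi_\mu+q^{-m}\phi_{-\mu})$, which always exists. Feeding this into Theorem~\ref{thm:fund} shows that Conjecture~\ref{conj1.3} is \emph{equivalent} to the Faltings-height Conjecture~\ref{conjY5.2}, so the cusp-form obstruction you identify is precisely accounted for by the $L'(\xi(f),U,0)$ term. This reframing does not prove the conjecture, but it is sharper than saying the relation holds only modulo a mysterious obstruction space.

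Finally, the paper's actual \emph{proofs} of Conjecture~\ref{conj1.3} in the cases $n=0,1,2$ (Theorems~\ref{theoY2.1}, \ref{thm:finite}, \ref{thm:n=2}) do not proceed by controlling this obstruction. They compute $\langle\calZ(m,\mu),\calZ(U)\rangle_{fin}$ directly via moduli: one pulls the intersection back along the map $j:\mathcal C\to\calX_K$ from the CM moduli stack, reduces to the $n=0$ computation of $\widehat{\deg}\,\calZ(m,\mathfrak a,\mu)$, and matches this to the Fourier coefficients of $\calE_N$ using the explicit formulas of Theorem~\ref{thm:kappaexplicit}. So your proposal captures the conceptual origin of the conjecture, but the verified cases rest on an entirely different, computational input.
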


In view of Theorem \ref{thm:fundintro}, this conjecture is essentially equivalent to Conjecture  \ref{conj:intro}. We discuss this in detail in  Section
\ref{sect:fal}, where we also give a slight generalization and
derive some consequences.


In Section \ref{sect6} we consider the case
 $n=0$ where $V$ is negative definite of
dimension $2$. Then we have $U=V$.
The even Clifford algebra of
$V$ is an imaginary quadratic field $\kay=\mathbb Q(\sqrt D)$,
and $H=\Gspin(V) =
\kay^*$.
For
simplicity we assume that the lattice $L$ is isomorphic to  a fractional
ideal $\fraka \subset k$ with the scaled norm $-\norm(\cdot)/\norm(\fraka)$ as the quadratic form.
We take
$K=\hat{\OO}_\kay^*$,
which acts on $L'/L$ trivially.  Then $X_K$
is the union of  two copies of the ideal class group $\Cl(k)$.
An integral model over $\Z$ can be found by slightly varying the setup of \cite{KRY1}. It is given as
the moduli stack $\mathcal C$ over $\Z$
of elliptic curves with complex multiplication by the ring of integers
of $k$.
The Heegner divisors can be defined on $\mathcal C$ by considering CM elliptic curves whose endomorphism ring is larger, and therefore equal to
an order of a quaternion algebra.
They are supported in finite characteristic.

In this case the lattice $P$ is zero-dimensional and the $L$-function $L(\xi(f),U,s)$ vanishes identically.
Therefore Conjecture \ref{conj:intro} reduces to the statement that the arithmetic degree of the Heegner divisor $\calZ(f)$ on $\mathcal C$ should be given
by the negative of the average of the regularized theta lift of $f$. We prove this identity using Theorem \ref{thm:fundintro} and the results obtained in \cite{KRY1}, respectively their generalization in \cite{KY1}. More precisely we show (see Theorem \ref{thm:n=0}):

\begin{theorem}
\label{thm:n=0intro}
Let $f\in H_{1,\bar\rho_L}$ and assume that the
constant term of $f$ vanishes. Then
\begin{align*}
\widehat{\deg}(\calZ(f))=-\frac{1}{2} \sum_{(z,h)\in X_K}\Phi(z,h,f).
\end{align*}
\end{theorem}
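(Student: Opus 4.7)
The plan is to combine the Archimedean height computation of Theorem \ref{thm:fundintro} with the arithmetic intersection formulas of \cite{KRY1} and \cite{KY1} for CM elliptic curves on the moduli stack $\calC$.

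First I specialize Theorem \ref{thm:fundintro} to the present setting. Since $n=0$ and $U=V$, the orthogonal complement $V_+$ is trivial, so the lattice $P=0$, the theta series $\theta_P$ reduces to $\phi_0$, and the Rankin-type $L$-function $L(\xi(f),U,s)$ vanishes identically because its defining sum over positive $m \in Q(\mu)+\Z$ with $\mu \in P'/P$ is empty. Moreover $T=\GSpin(U)=H$, so the CM cycle $Z(U)$ coincides with $X_K$ once the orientation is absorbed into $\vol(K_T)$. Thus Theorem \ref{thm:fundintro} collapses to
\[
\frac{1}{2}\sum_{(z,h)\in X_K}\Phi(z,h,f) \;=\; \frac{2}{\vol(K_T)}\,\CT\!\bigl(\langle f^+,\calE_N\rangle\bigr).
\]

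Next, by linearity of both sides of the desired identity in the principal part $\{c^+(-m,\mu)\}$ of $f$, and using the hypothesis $c^+(0,0)=0$, it suffices to establish the coefficient-by-coefficient identity
\[
\widehat{\deg}\bigl(\calZ(m,\mu)\bigr) \;=\; -\frac{2}{\vol(K_T)}\,b(m,\mu),
\]
for each positive $m \in Q(\mu)+\Z$ and each $\mu \in L'/L$, where $b(m,\mu)$ is the $(m,\mu)$-th Fourier coefficient of $\calE_N$. This identity is precisely the main arithmetic content of \cite{KRY1}, generalized to arbitrary cosets $\mu$ in \cite{KY1}. In those works the vertical divisor $\calZ(m,\mu)$ on $\calC$ is computed through the local intersection multiplicities coming from quaternion orders at primes of bad reduction and identified with the corresponding Fourier coefficient of the central derivative of the incoherent weight-one Eisenstein series attached to $\kay$; in our normalization this Eisenstein series is exactly $E_N(\tau,s;1)$ and its holomorphic part at the derivative is $\calE_N$. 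Combining the two steps gives
\[
\widehat{\deg}\bigl(\calZ(f)\bigr) \;=\; -\frac{2}{\vol(K_T)}\,\CT\!\bigl(\langle f^+,\calE_N\rangle\bigr) \;=\; -\frac{1}{2}\sum_{(z,h)\in X_K}\Phi(z,h,f),
\]
as claimed.

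The main technical obstacle is the careful reconciliation of conventions: translating between the vector-valued Weil representation formalism used here and the classical scalar-valued Eisenstein series framework of \cite{KRY1}, and matching the adelic volume factor $\vol(K_T)$ against the sizes of the automorphism groups of the $\OO_\kay$-CM elliptic curves parametrised by $\calC$. A secondary point is that coefficients $c^+(0,\mu)$ with $\mu\neq 0$ could a priori contribute to $\CT(\langle f^+,\calE_N\rangle)$, but the support condition $n\in Q(\mu)+\Z$ on Fourier coefficients together with the hypothesis $c^+(0,0)=0$ has to be aligned with the arithmetic side. Once these normalizations are harmonized the result is a direct consequence of Theorem \ref{thm:fundintro} and the known arithmetic intersection formulas on $\calC$.
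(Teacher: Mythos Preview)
Your proposal is correct and follows essentially the same route as the paper: specialize Theorem~\ref{thm:fundintro} to $n=0$ (so $P=0$, the $L$-function term disappears, and $Z(U)=X_K$), then reduce by linearity to the coefficient identity $\widehat{\deg}\,\calZ(m,\mu)=-\tfrac{2}{\vol(K_T)}\kappa(m,\mu)$, which is supplied by \cite{KRY1}, \cite{KY1}. The paper records this coefficient identity as a standalone theorem and, rather than citing \cite{KY1} as a black box, matches its arithmetic degree formula term by term against Schofer's explicit formula for $\kappa(m,\mu)$; what you flag as ``reconciliation of conventions'' is in fact where most of the work lies, but you have correctly identified both ingredients and how they fit together.
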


In Section \ref{sect:5} we consider the case $n=1$. We let $V$ be the
rational quadratic space of signature $(1,2)$ given by the trace zero
$2\times 2$ matrices with the quadratic form $Q(x)=N\det(x)$, where
$N$ is a fixed positive integer.  In this case $H\cong \Gl_2$. We
chose the lattice $L\subset V$ and the compact open subgroup $K\subset
H(\A_f)$ such that $X_K$ is isomorphic to the modular curve
$\Gamma_0(N)\bs \H$.
The Heegner divisors $Z(m,\mu)$ and the CM cycles $Z(U)$ are both supported on CM points and therefore closely related.

The space $S_{3/2,\rho_L}$ can be identified with the space of Jacobi cusp
forms of weight $2$ and index $N$. Recall that there is a Shimura
lifting from this space to cusp forms of weight $2$ for $\Gamma_0(N)$,
see \cite{GKZ}.  Let $G$ be a normalized newform of weight $2$
for $\Gamma_0(N)$ whose Hecke $L$-function $L(G,s)$ satisfies an odd
functional equation.  There exists a newform $g\in S_{3/2,\rho_L}$
corresponding to $G$ under the Shimura correspondence.
It turns out that the $L$-function $L(g,U,s)$
is proportional to $L(G,s+1)$, see Lemma \ref{prop:l1}.

We may choose
$f\in H_{1/2,\bar\rho_L}$ with vanishing constant term such that
$\xi(f)=\|g\|^{-2} g$ and such that the principal
part of $f$ has coefficients in the number field generated by the
eigenvalues of $G$.
Then $Z(f)$ defines an explicit point in the Jacobian of $X_0(N)$, which lies in the $G$ isotypical component,
see Theorem \ref{thm:isogen}.
In this case Conjecture~\ref{conj:intro} essentially reduces to the following Gross-Zagier type formula for the Neron-Tate height of $Z(f)$ (Theorem~\ref{GZ1}).

\begin{theorem}
\label{GZintro}
The Neron-Tate height of $Z(f)$ is given by
\[
\langle Z(f), Z(f)\rangle_{NT}=  \frac{2\sqrt{N}}{\pi\|g\|^2}
L'\big(G,1).
\]
\end{theorem}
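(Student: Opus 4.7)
The plan is to reduce the computation of the N\'eron-Tate height to the Faltings height pairing with CM cycles via arithmetic intersection theory, then to collapse the resulting sum of central $L$-derivatives to $L'(G,1)$ using the Shimura correspondence.

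First I set things up. Since $\xi(f)=\|g\|^{-2}g$ is a nonzero cusp form, the arithmetic divisor $\hat\calZ(f)$ is \emph{not} rationally trivial, in contrast to the weakly holomorphic case. However, after suitable cuspidal adjustment $Z(f)$ defines a degree-zero divisor lying in the $G$-isotypic component of $J_0(N)(\Q)$ by Theorem \ref{thm:isogen}, and projecting to that component annihilates any contribution from the cusps, Eisenstein series, and other Hecke eigencomponents. I may therefore compute $\langle Z(f),Z(f)\rangle_{NT}$ as $-\langle\hat\calZ(f),\hat\calZ(f)\rangle$, interpreting the right-hand side via the extension of Arakelov intersection theory to pre-log-log Green functions, since the theta lift $\Phi(\cdot,f)$ has logarithmic, but not necessarily Arakelov-admissible, singularities along $Z(f)$.

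Next I expand the self-intersection by linearity in one argument,
\[
\langle\hat\calZ(f),\hat\calZ(f)\rangle
=\sum_{\mu\in L'/L}\sum_{m>0}c^+(-m,\mu)\,\bigl\langle\hat\calZ(f),\calZ(m,\mu)\bigr\rangle_{Fal},
\]
and decompose each Heegner divisor $\calZ(m,\mu)$ on $X_0(N)$ into its constituent CM points, each of which is a CM cycle $\calZ(U)$ attached to a negative definite two-dimensional rational subspace $U\subset V$. Applying Conjecture \ref{conj:intro} (which I must verify in this $n=1$ setting) to each CM cycle replaces $\langle\hat\calZ(f),\calZ(U)\rangle_{Fal}$ with $\tfrac{2}{\vol(K_T)}L'(\xi(f),U,0)$. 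Assembling these contributions, and recognising the sum over CM points within $\calZ(m,\mu)$ as the Fourier coefficient $r(m,\mu)$ of the relevant theta series $\theta_P$, the total expression reassembles the Rankin-Selberg convolution defining $L(\xi(f),U,s)$.

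Third, I apply Lemma \ref{prop:l1}, which identifies $L(g,U,s)$ with $L(G,s+1)$ up to an explicit factor independent of the individual CM point. Since $\xi(f)=\|g\|^{-2}g$ gives $L'(\xi(f),U,0)=\|g\|^{-2}L'(g,U,0)$, differentiation at $s=0$ produces $L'(G,1)$, and careful bookkeeping of $\vol(K_T)$, the local Tamagawa factors in Lemma \ref{prop:l1}, and the factor $\sqrt{N}$ from the functional equation of the incoherent Eisenstein series $E_N(\tau,s;1)$ combine to yield the precise constant $\tfrac{2\sqrt{N}}{\pi\|g\|^2}$ of the theorem.

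The hard part will be establishing Conjecture \ref{conj:intro} at the finite places in this setting, that is, verifying Conjecture \ref{conj1.3} for the intersection of Heegner divisors $\calZ(m,\mu)$ with CM cycles $\calZ(U)$ on a regular integral model of $X_0(N)$. This requires controlling deformations of CM elliptic curves and their supersingular reductions at primes dividing $mD$, and matching the resulting intersection multiplicities with the Fourier coefficients of $\theta_P\otimes\calE_N$. For this I intend to draw on the Gross-Zagier local intersection calculations and on the finite intersection formulas established in the KRY program \cite{KRY1}, \cite{KRY2}, \cite{KY1}, adapted from $\mathcal C$ to the modular curve setting. The Archimedean side poses no additional difficulty, as it is supplied directly by Theorem \ref{thm:fundintro} applied to the Green function $\Phi(\cdot,f)$.
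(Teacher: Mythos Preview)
Your proposal has a genuine structural gap and misses the main mechanism of the paper's argument.

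First, the self-intersection issue: you expand $\langle\hat\calZ(f),\hat\calZ(f)\rangle$ as $\sum_{m,\mu}c^+(-m,\mu)\langle\hat\calZ(f),\calZ(m,\mu)\rangle_{Fal}$, but the $\calZ(m,\mu)$ appearing here are precisely the components of $\calZ(f)$ itself, so these pairs do not intersect properly on the generic fiber and the Green function $\Phi(\cdot,f)$ blows up on them. Theorem~\ref{thm:fundintro} evaluates $\Phi$ at a CM cycle $Z(U)$ \emph{disjoint} from $Z(f)$; it does not apply here. The paper sidesteps this entirely: rather than expanding the self-pairing, it uses the modularity statement (Theorem~\ref{thm:isogen}) to write
\[
b(m_0,\mu_0)\,\langle y(f),y(f)\rangle_{NT}=\langle Z^c(f),y(m_0,\mu_0)\rangle_{NT}
\]
for a pair $(m_0,\mu_0)$ with discriminant $D_0$ chosen \emph{coprime} to all discriminants in the support of $Z(f)$, takes a degree-zero combination with a second such pair $(m_1,\mu_1)$ to kill the cuspidal contribution, and only then passes to a Faltings height of properly intersecting cycles.

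Second, and more seriously, your ``hard part'' --- establishing Conjecture~\ref{conj1.3} at the finite places --- is exactly what the paper's proof of this theorem is designed to \emph{avoid}. The argument uses only the trivial fact that if $p$ splits in $\Q(\sqrt{D_0})$ then CM-by-$\calO_{D_0}$ elliptic curves in characteristic $p$ are ordinary, hence $\langle\calZ(f),\calZ(m_0,\mu_0)\rangle_p=0$. Combined with Theorem~\ref{thm:kappaexplicit}, which shows the coefficients of $\calE_N$ also vanish at split primes, this forces the unwanted ``$\CT$-term $+$ finite intersection'' in \eqref{eqY7.18} to vanish one prime at a time, by varying $(m_0,\mu_0)$ and $(m_1,\mu_1)$ over discriminants in which the given prime splits (existence supplied by Lemma~\ref{lem:non-vanishing}, i.e.\ Bump--Friedberg--Hoffstein, and $c\neq 0$ by the Iwaniec--Duke bound against Siegel's lower bound). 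Your proposed route via ``Gross--Zagier local intersection calculations'' would be circular: those calculations are essentially equivalent to the theorem you are trying to prove. The paper does later establish Conjecture~\ref{conj1.3} in this setting (Theorem~\ref{thm:finite}), but by pulling back to the $n=0$ moduli stack $\calC$ and invoking Theorem~\ref{theoY2.1} --- not by redoing Gross--Zagier's local analysis --- and this is offered as an \emph{alternative} route, not as the proof of Theorem~\ref{GZintro}.

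A minor point: in the $n=1$ case the Heegner divisor $Z(m,\mu)$ already \emph{is} a CM cycle $Z(U)$ (Proposition~\ref{cor:zcomp}); there is no further ``decomposition into constituent CM points'' to perform, and your sentence about recognizing a sum over CM points as the theta coefficient $r(m,\mu)$ does not parse.
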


The proof of this result which we give in Section \ref{sect:gz}
is quite different
from the original proof of  Gross and Zagier  and uses {\it minimal}
information on finite intersections between  Heegner divisors.
Instead, we derive it from Theorem \ref{thm:fundintro},
modularity of the generating series of Heegner divisors (Borcherds'
approach to the Gross-Kohnen-Zagier theorem \cite{Bo2}), and
multiplicity one for the subspace of newforms in $S_{3/2,\rho_L}$
\cite{SZ}.  Another crucial ingredient is the non-vanishing result for
coefficients of weight $2$ Jacobi cusp forms by Bump, Friedberg,
and Hoffstein \cite{BFH}.
Employing in addition the Waldspurger type formula for the coefficients of
$g$ \cite{GKZ}, we also obtain the Gross-Zagier formula
as stated at the beginning.

We conclude Section~\ref{sect7} by giving an alternative proof of Conjectures \ref{conj:intro} and \ref{conj1.3} in this case.
It relies on the computation of the finite intersection pairing of
$\calZ(f)$ and $\calZ(U)$ by pulling back to $\calZ(U)$ and employing
the results for the $n=0$ case obtained in Section~\ref{sect6}.
Finally, in Section~\ref{sect:8} we use the same idea to prove
Conjecture~\ref{conj1.3} in certain special cases for $n=2$. Here we consider the case where the CM $0$-cycle lies on the diagonal in a Hilbert modular surface. The normalization of the Hirzebruch-Zagier divisor given by the diagonal is the modular curve of level $1$. We may pull back the divisor $\calZ(f)$ to this modular curve and compute the intersection there using the results of Section~\ref{sect7} (see Theorem~\ref{thm:n=2}).

The paper is organized as follows. In Section~\ref{sect:2} we collect important facts on theta series, Eisenstein series and the Siegel-Weil formula.
In Section~\ref{sect:3} we recall some results on vector valued modular forms and harmonic weak Maass forms.
In Section~\ref{sect4} we define the regularized theta lift and compute the CM values of automorphic Green functions.
Section~\ref{sect:fal} contains the conjectures on Faltings heights. In Section~\ref{sect6} we consider the case $n=0$, in Section~\ref{sect:5} the case $n=1$, and in Section~\ref{sect:8} the case $n=2$.

We would like to thank W. Kohnen and S. Kudla for very helpful conversations.
Part of this paper was written, while the first author was visiting the Max-Planck Institute for Mathematics in Bonn.
He would like to thank the institute for providing a stimulating environment.
The second author thanks  the AMSS and the Morningside Center
of Mathematics at Beijing for providing a wonderful working environment
during his visits there, where  part of this work is done.

\section{Theta series and Eisenstein series}
\label{sect2}  \label{sect:2}

Here we fix the basic setup. We present some facts on theta series,
Eisenstein series, and the Siegel-Weil formula. We refer to
\cite{Ku1}, \cite{Ku4} for details.

Let $(V,Q)$ be a quadratic space over $\Q$ of signature $(n,2)$. Let
$H=\GSpin(V)$, and $G=\Sl_2$, viewed as an algebraic groups over $\Q$.
Recall that there is an exact sequence of algebraic groups
\[
1\longrightarrow \G_m \longrightarrow H\longrightarrow \SO(V)
\longrightarrow 1.
\]
Let $\A$ be the ring of adeles of $\Q$. We write $G'_\A$ for the
twofold metaplectic cover of $G(\A)$. We frequently identify
$G'_\R$, the full inverse image in $G'_\A$ of $G(\R)$, with the
group of pairs
\[
(g,\phi(\tau))
\]
where $g=\kabcd\in \Sl_2(\R)$ and $\phi(\tau)$ is a holomorphic
function on the upper complex half plane $\H$ such that
$\phi(\tau)^2=c\tau+d$. The multiplication is given by
$(g_1,\phi_1(\tau))(g_2,\phi_2(\tau))= (g_1
g_2,\phi_1(g_2\tau)\phi_2(\tau))$.

Let $K'$ be the full inverse image in $G'_\A$ of
$K=\Sl_2(\hat\Z)\subset G(\A_f)$. Let $K'_\infty$ be the full
inverse image in $G'_\R$ of $K_\infty=\SO(2,\R)\subset G(\R)$. We
write $G_\Q'$ for the image in $G_\A'$ of $G(\Q)$ under the
canonical splitting. We have $G'_\A=G'_\Q G'_\R K'$ and
\[
\Gamma:=\Sl_2(\Z)\cong G'_\Q\cap G'_\R K'.
\]
We write $\Gamma'=\Mp_2(\Z)$ for the full inverse image of
$\Sl_2(\Z)$ in $G'_\R$. Then for every $\gamma'\in \Gamma'$ there
are unique elements $\gamma\in \Gamma$ and $\gamma''\in K'$ such
that
\begin{align}
\label{eq:gamma} \gamma=\gamma'\gamma''.
\end{align}
The assignment $\gamma'\mapsto \gamma''$ defines a homomorphism
$\Gamma'\to K'$. Let $\psi$ be the standard non-trivial additive
character of $\A/\Q$. The groups $G'_\A$ and $H(\A)$ act on the
space $S(V(\A))$ of Schwartz-Bruhat functions of $V(\A)$ via the
Weil representation $\omega=\omega_\psi$.

For $\varphi\in S(V(\A))$ we have the usual theta function
\[
\vartheta(g,h;\varphi)= \sum_{x\in V(\Q)} (\omega(g,h)\varphi)(x),
\]
where $g\in G'_\A$ and $h\in H(\A)$. It is left invariant under
$G'_\Q$ by Poisson summation, and it is trivially left invariant
under $H(\Q)$.

Here we consider the following specific Schwartz functions. We
realize the hermitean symmetric space corresponding to $H(\R)$ as
the Grassmannian
\[
\D=\{ z\subset V(\R);\quad \text{$\dim(z)=2$ and $Q\mid_z<0$}\}
\]
of oriented negative definite $2$-dimensional subspaces of $V(\R)$. For any
$z\in \D$, we may consider the corresponding majorant
\[
(x,x)_z= (x_{z^\perp}, x_{z^\perp})-(x_z,x_z),
\]
which is a positive definite quadratic form on the vector space
$V(\R)$. The Gaussian
\[
\varphi_\infty(x,z)=\exp(-\pi (x,x)_z )
\]
belongs to $S(V(\R))$. It has the invariance property
$\varphi_\infty(hx,hz)= \varphi_\infty(x,z)$ for any $h\in H(\R)$.
Moreover, it has weight $n/2-1$ under the action of the maximal
compact subgroup $K'_\infty\subset G'_\R$.
Let $\varphi_f\in S(V(\A_f))$. We obtain a theta function on
$G'_\A\times H(\A)$ by putting
\begin{align}
\label{theta1}
\theta(g,h;\varphi_f)=\vartheta\big(g,h;\varphi_\infty(\cdot,z_0
)\otimes \varphi_f(\cdot)\big),
\end{align}
where $z_0\in \D$ denotes a fixed base point. This theta
function can be written as a theta function on $\H\times \D$ in the
usual way. For $z\in \D$ we chose a $h_z\in H(\R)$ such that $h_z z_0
=z$. Notice that
$\omega(h_z)\varphi_\infty(\cdot,z_0)=\varphi_\infty(\cdot,z)$.
Moreover, choosing $i$ as a base point for $\H$, we put
\[
g_\tau=\zxz{1}{u}{0}{1}\zxz{v^{1/2}}{0}{0}{v^{-1/2}}
\]
for $\tau=u+iv\in \H$ and write $g_\tau'=(g_\tau,1)\in G'_\R$. So we
have $g'_\tau i =\tau$. We then obtain the theta function
\begin{align}
\label{theta1.5}
\theta(\tau,z,h_f;\varphi_f) & = v^{-n/4+1/2}\vartheta\big(g'_\tau,(h_z,h_f);\varphi_\infty(\cdot,z_0 )\otimes \varphi_f(\cdot)\big)\\
\nonumber & = v^{-n/4+1/2} \sum_{x\in V(\Q)} \omega(g_\tau') \big(
\varphi_\infty(\cdot,z) \otimes \omega(h_f)\varphi_f\big)(x)
\end{align}
for $h_f\in H(\A_f)$. Using the fact that
\[
v^{-n/4+1/2} \omega(g_\tau')\big(\varphi_\infty(\cdot,z)\big) (x)= v
\,e\big( Q(x_{z^\perp})\tau+Q(x_{z})\bar\tau\big),
\]
we find more explicitly
\begin{align}
\label{theta2} \theta(\tau,z,h_f;\varphi_f)  = v  \sum_{x\in V(\Q)}
e\big( Q(x_{z^\perp})\tau+Q(x_{z})\bar\tau\big)\otimes
\varphi_f(h_f^{-1} x).
\end{align}

By means of the argument of \cite[Lemma 1.1]{Ku4}, we obtain the
following transformation formula for $\theta(\tau,z,h_f;\varphi_f)$
under $\Gamma'$. Let $\gamma'=(\kabcd,\phi)\in \Gamma'$, and write
$\gamma=\gamma'\gamma''$ as in \eqref{eq:gamma}. Then we have
\begin{align}
\label{theta3} \theta\left(\abcd\tau,z,h_f;\varphi_f\right)=
\phi(\tau)^{n-2}
\theta\left(\tau,z,h_f;\omega_f(\gamma'')^{-1}\varphi_f\right).
\end{align}
If we view $\theta\left(\tau,z,h_f;\cdot\right)$ as a function on $\H$ with values in the dual space $S(V(\A_f))^\vee$ of $S(V(\A_f))$, we see that $\theta\left(\tau,z,h_f;\cdot\right)$ transforms as a (non-holomorphic) modular form of weight $n/2-1$ with representation $\omega_f^\vee$.

Let $L\subset V$ be an even lattice and write $L'$ for the dual
lattice. The discriminant group $L'/L$ is finite. We consider the
subspace $S_L$ of Schwartz functions  in $S(V(\A_f))$ which are
supported on $L'\otimes \hat \Z$ and which are constant on cosets of
$\hat{L}=L\otimes \hat \Z$. For any $\mu\in L'/L$, the
characteristic function
\[
\phi_\mu=\Char(\mu+\hat{L})
\]
belongs to $S_L$, and we have
\[
S_L=\bigoplus_{\mu\in L'/L}\C\phi_\mu \subset S(V(\A_f)).
\]
In particular, the dimension of $S_L$ is equal to $|L'/L|$. The
space $S_L$ is stable under the action of $K'$ via the Weil
representation (see \cite{Ku4}).

We define a $S_L$-valued theta function by putting
\begin{align}
\label{theta2.5} \theta_L(\tau,z,h_f)=\sum_{\mu\in L'/L}
\theta(\tau,z,h_f;\phi_\mu) \phi_\mu.
\end{align}
If we identify $S_L$ with the group ring $\C[L'/L]$ by mapping
$\phi_\mu$ to the standard basis element $\frake_\mu$ of $\C[L'/L]$,
then $\theta_L(\tau,z,1)$ is exactly the Siegel theta function
$\Theta_L(\tau,z)$ considered by Borcherds in \cite{Bo1} \S4 for the
polynomial $p=1$. (Under this identification of $S_L$ with
$\C[L'/L]$ the $L^2$ scalar product on $S_L$ corresponds to the
standard scalar product on $\C[L'/L]$. The convolution product
corresponds to the usual product in $\C[L'/L]$.) Let
$\gamma'=(\kabcd,\phi)\in \Gamma'$. We write
$\gamma=\gamma'\gamma''$ as in \eqref{eq:gamma} and put
\begin{align}
\label{eq:rhol} \rho_L(\gamma')=\bar\omega_f(\gamma'').
\end{align}
Then $\rho_L$ defines a representation of $\Gamma'$ on $S_L$. The
transformation formula \eqref{theta3} implies that
\begin{align}
\label{theta4} \theta_L\left(\abcd\tau,z,h_f\right)=
\phi(\tau)^{n-2} \rho_L(\gamma') \theta_L\left(\tau,z,h_f\right).
\end{align}
Let $T= \left( \kzxz{1}{1}{0}{1}, 1\right)$, and $S= \left(
\kzxz{0}{-1}{1}{0}, \sqrt{\tau}\right)$ denote the standard
generators of $\Gamma'$. Recall that the action of $\rho_L$ is given
by
\begin{align}
\label{eq:weilt}
\rho_L(T)(\phi_\mu)&=e(\mu^2/2)\phi_\mu,\\
\label{eq:weils}
\rho_L(S)(\phi_\mu)&=\frac{e((2-n)/8)}{\sqrt{|L'/L|}} \sum_{\nu\in
L'/L} e(-(\mu,\nu)) \phi_\nu,
\end{align}
see e.g. \cite{Bo1}, \cite{Ku4}, \cite{Br}.

\subsection{The Siegel-Weil formula}

Here we briefly recall the Siegel-Weil formula in our setting (see
\cite{Weil}, \cite{KR1}, \cite{KR2}, \cite{Ku1}). We assume that $n$
is even, which is sufficient for our purposes. In this case the
dimension of $V$ is even so that the Weil representation factors
through $G=\Sl_2$.

For $a\in \G_m$ we put $m(a)=\kzxz{a}{0}{0}{a^{-1}}$, and for $b\in
\G_a$ we put $n(b)= \kzxz{1}{b}{0}{1}$. Let $P=MN\subset G$ be the
parabolic subgroup of upper triangular matrices, where
\begin{align}
M &=\left\{ m(a);\; a\in \G_m\right\}, \\
N &=\left\{ n(b);\; b\in
\G_a\right\}.
\end{align}
Let $\chi_V$ denote the quadratic character of $\A^\times/\Q^\times$
associated to $V$ given by
\[
\chi_V(x)=(x,(-1)^{\dim V/2} \det(V))_\A.
\]
Here $\det(V)$ denotes the Gram determinant of $V$ and
$(\cdot,\cdot) $ is the Hilbert symbol on $\A^*$. For $s\in \C$ we
denote by $I(s,\chi_V)$ the principal series representation of
$G(\A)$ induced by $\chi_V  |\cdot|^s$. It consists of  all smooth
functions $\Phi(g,s)$ on $\G(\A)$ satisfying
\[
\Phi(n(b)m(a)g,s)=\chi_V(a) |a|^{s+1}\Phi(g,s)
\]
for all $b\in \A$, $a\in \A^\times$, and the action of $G(\A)$ is
given by right translations. There is a $G(\A)$-intertwining map
\begin{equation}
\lambda: S(V(\A))\longrightarrow I(s_0,\chi_V),\quad
\lambda(\varphi)(g)=(\omega(g)\varphi)(0),
\end{equation}
where $s_0=\dim(V)/2-1$.
A section $\Phi(s)\in I(s,\chi_V)$ is called standard, if its
restriction to $K_\infty K$ is independent of $s$. Using the Iwasawa
decomposition $G(\A)=N(\A)M(\A)K_\infty K$, we see that the function
$\lambda(\varphi)\in I(s_0,\chi_V)$ has a unique extension to a
standard section $\lambda(\varphi,s)\in I(s,\chi_V)$ such that
$\lambda(\varphi,s_0)=\lambda(\varphi)$.

We give an example at the Archimedian place. For $\ell\in \Z$, let
$\chi_\ell$ be the character of $K_\infty$ defined by
\[
\chi_\ell(k_\theta)=e^{i\ell\theta},
\]
where
$k_\theta=\kzxz{\cos\theta}{\sin\theta}{-\sin\theta}{\cos\theta}\in
K_\infty$. Let $\Phi_\infty^\ell(s)\in I_\infty(s,\chi_{V})$  be the
unique standard section such that
\[
\Phi_\infty^\ell(k_\theta,s)=\chi_\ell(k_\theta)=e^{i\ell\theta}.
\]
In terms of the Iwasawa decomposition we have
\begin{align}
\label{iwphi} \Phi_\infty^\ell(n(b)m(a)k_\theta,s)=\chi_V(a)
|a|^{s+1}e^{i\ell\theta}.
\end{align}
Then it is easily seen that for the Gaussian we have
\begin{align}
\label{gaussian}
\lambda_\infty(\varphi_\infty(\cdot,z))=\Phi_\infty^{n/2-1}(s_0).
\end{align}

For any standard section $\Phi(s)$, the Eisenstein series
\[
E(g,s;\Phi)=\sum_{\gamma\in P(\Q)\bs G(\Q)} \Phi(\gamma g, s)
\]
converges for $\Re(s)>1$ and defines an automorphic form on $G(\A)$.
It has a meromorphic continuation in $s$ to the whole complex plane
and satisfies a functional equation relating $E(g,s;\Phi)$ and
$E(g,-s;M(s)\Phi)$. In our special case, the Siegel Weil formula
says the following (see \cite{Weil}, \cite[Theorem 4.1]{Ku4}).

\begin{theorem}
\label{Siegel-Weil} Let $V$ be a rational quadratic space of
signature $(n,2)$ as above. Assume that $V$ is anisotropic or that
$\dim(V)-r_0>2$, where $r_0$ is the Witt index of $V$. Then
$E(g,s;\lambda(\varphi))$ is holomorphic at $s_0$ and
\[
\frac{\alpha}{2} \int_{\SO(V)(\Q)\bs \SO(V)(\A)}
\vartheta(g,h;\varphi)\,dh=E(g,s_0; \lambda(\varphi)).
\]
Here $dh$ is the Tamagawa measure on $\SO(V)(\A)$, and $\alpha=2$ if
$n=0$, and $\alpha=1$ if $n>0$.
\end{theorem}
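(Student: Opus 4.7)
The plan is to follow Weil's classical argument \cite{Weil} in the adelic reformulation due to Kudla and Rallis \cite{KR1}, \cite{KR2}. Both sides of the asserted identity are automorphic forms on $G(\A)$ transforming identically under the maximal compact subgroup $K_\infty K$. The strategy is to match their constant terms along the unipotent radical $N$ of the Borel $P$; since $\Sl_2$ has a unique proper parabolic, a rigidity argument then forces the two sides to agree.

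First I would verify convergence. The Weil hypothesis on $V$ guarantees that $\int_{\SO(V)(\Q)\bs \SO(V)(\A)}\vartheta(g,h;\varphi)\,dh$ is absolutely convergent for every $g\in G(\A)$: in the anisotropic case because $\SO(V)(\Q)\bs \SO(V)(\A)$ is compact, and in the isotropic case by Weil's uniform estimates on sums of Schwartz functions restricted to non-zero isotropic orbits. For the Eisenstein series, the same hypothesis places $s_0=\dim(V)/2-1$ at a point of holomorphy; when $n\geq 3$ this point lies in the region of absolute convergence $\Re(s)>1$, and the remaining small-$n$ cases are handled by meromorphic continuation, with the theta integral itself providing the continuation.

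The main step is the unfolding of the constant term of the theta integral along $N$. Substituting $g\mapsto n(b)g$ and integrating over $b\in \Q\bs \A$, the Weil action $\omega(n(b))\varphi(x)=\psi(bQ(x))\varphi(x)$ restricts the sum in $\vartheta$ to isotropic vectors $x\in V(\Q)$. Decomposing into $\SO(V)(\Q)$-orbits, the orbit of $x=0$ produces $\lambda(\varphi)(g,s_0)$ after absorbing the Tamagawa volume $\vol(\SO(V)(\Q)\bs \SO(V)(\A))$ via $\tau(\SO(V))=2$; this is precisely what the factor $\alpha/2$ is designed to compensate for (with the modification $\alpha=2$ in the degenerate case $n=0$). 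The non-zero isotropic orbits, in turn, unfold against their stabilizers and reassemble through the non-trivial Bruhat cell $NwP$ into the intertwining term $M(s_0)\lambda(\varphi)(g,-s_0)$. Comparison with the standard formula $E_N(g,s;\Phi)=\Phi(g,s)+M(s)\Phi(g,-s)$ then shows that both sides of the Siegel--Weil identity have identical constant terms along $P$.

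A standard rigidity argument for automorphic forms on $\Sl_2(\A)$ constructed from sections of $I(s_0,\chi_V)$ then forces equality of the two sides, completing the proof. The main obstacle is the unfolding step when $V$ is isotropic: one must verify absolute convergence of the individual orbital integrals under the strict hypothesis $\dim V-r_0>2$ and carry out a careful Bruhat-cell calculation to identify their sum with $M(s_0)\lambda(\varphi)$. This is the technical heart of \cite{Weil} and \cite{KR1}, to which we refer for the detailed computations.
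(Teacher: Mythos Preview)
The paper does not prove this theorem at all; it is stated as a known result with references to \cite{Weil} and \cite[Theorem~4.1]{Ku4} (and implicitly \cite{KR1}, \cite{KR2}), followed only by the remark that the theta integral converges absolutely by Weil's convergence criterion. Your sketch is a faithful outline of the argument found in those references---matching constant terms along $N$ via the $\SO(V)(\Q)$-orbit decomposition of isotropic vectors, with the $x=0$ orbit producing $\lambda(\varphi)(g,s_0)$ and the nonzero isotropic orbits assembling into $M(s_0)\lambda(\varphi)$---so there is nothing to compare beyond noting that you have supplied what the authors chose to cite.

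One small point worth tightening in your sketch: the claim that the theta integral ``itself provides the continuation'' of the Eisenstein series in the small-$n$ cases is circular as stated, since that is precisely what you are trying to prove. The actual argument in \cite{KR1}, \cite{KR2} establishes holomorphy of $E(g,s;\lambda(\varphi))$ at $s_0$ independently (via the functional equation and analysis of the intertwining operator), and only then identifies the value with the theta integral. Apart from this, your outline is sound.
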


Note that the theta integral on the right hand side converges
absolutely by Weil's convergence criterion.

\subsection{Quadratic spaces of signature $(0,2)$.}

\label{sect:2.2}

Here, as in \cite{Scho}, we are interested in the special case,
where $V$ is a definite space of signature $(0,2)$. Then $(V,Q)$ is
isometric to $(k,-c\norm(\cdot))$ for an imaginary quadratic field
$k$ with the negative of the norm form scaled by
a constant $c\in \Q_{>0}$.
The group $H(\Q)$ can be identified with the multiplicative group
$k^*$ of $k$, and $\SO(V)$ is the group of norm $1$ elements in
$k$. The homomorphism $H\to \SO(V)$ is given by $h\mapsto h\bar
h^{-1}$, and $\SO(V)$ acts on $k$ by multiplication. Moreover, the
Grassmannian $\D$ consists of the two points $z_V^+$ and $z_V^-$
given by $V(\R)$ with positive and negative orientation,
respectively. We want to compute the integral of the theta function
$\theta_L(\tau,z_V,h_f)$ in \eqref{theta2.5}, where $z_V\in \D$. To
this end, for $\ell\in \Z$, we define a $S_L$-valued Eisenstein
series of weight $\ell$ by putting
\begin{align}
\label{eisvector} E_L(\tau,s;\ell) & = v^{-\ell/2} \sum_{\mu\in
L'/L} E(g_\tau,s;\Phi_\infty^{\ell}\otimes
\lambda_f(\phi_\mu))\phi_\mu.
\end{align}

We normalize the measure on $\SO(V)(\R)\cong \SO(2,\R)$ such that
$\vol(\SO(V)(\R))=1$. This determines the normalization of the measure
$dh_f$ on $\SO(V)(\A_f)$. Note that  in this
normalization we have $\vol(\SO(V)(\Q)\bs \SO(V)(\A_f))=2$.
By Theorem \ref{Siegel-Weil} and
\eqref{gaussian} we obtain:

\begin{proposition}
\label{sw2} We have
\[
\int_{\SO(V)(\Q)\bs \SO(V)(\A_f)} \theta_L(\tau,z_V,h_f)\,dh=E_L(\tau,0; -1).
\]
\end{proposition}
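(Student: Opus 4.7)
The plan is to specialize the Siegel-Weil formula (Theorem \ref{Siegel-Weil}) to the rank-two anisotropic space $V$ and then translate both sides from the adelic picture back into the classical variables $(\tau,z_V,h_f)$. First I set $n=0$, so that $s_0=\dim V/2-1=0$ and the Archimedian weight in \eqref{gaussian} becomes $n/2-1=-1$. Since $V$ is definite over $\Q$ it is anisotropic, so the hypothesis of Theorem \ref{Siegel-Weil} is satisfied with $\alpha=2$. Applying the theorem to the Schwartz function $\varphi=\varphi_\infty(\cdot,z_V)\otimes\phi_\mu$ gives, for each $\mu\in L'/L$,
\[
\int_{\SO(V)(\Q)\bs\SO(V)(\A)}\vartheta(g,h;\varphi)\,dh \;=\; E\bigl(g,0;\lambda(\varphi)\bigr).
\]
The key Archimedian observation is that $\SO(V)(\R)\cong\SO(2,\R)$ is connected and therefore fixes both orientations of the negative-definite space $V(\R)$; in particular it fixes $z_V$, and via the invariance $\varphi_\infty(hx,hz)=\varphi_\infty(x,z)$ the Gaussian $\varphi_\infty(\cdot,z_V)$ is invariant under $\omega_\infty(\SO(V)(\R))$. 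Combined with the normalization $\vol(\SO(V)(\R))=1$, this lets the Archimedian factor drop out of the theta integral, leaving an integration over $\SO(V)(\Q)\bs\SO(V)(\A_f)$.

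Next I convert the resulting identity from $G'_\A$ into the classical picture by substituting $g=g_\tau'$. For $n=0$ the prefactor in \eqref{theta1.5} is $v^{-n/4+1/2}=v^{1/2}$, and choosing $z_V=z_0$ (so that $h_{z_V}=1$) yields
\[
\theta(\tau,z_V,h_f;\phi_\mu) \;=\; v^{1/2}\,\vartheta\bigl(g_\tau',(1,h_f);\varphi_\infty(\cdot,z_V)\otimes\phi_\mu\bigr).
\]
By \eqref{gaussian}, $\lambda_\infty(\varphi_\infty(\cdot,z_V))=\Phi_\infty^{-1}(0)$, so $E(g_\tau',0;\lambda(\varphi))=E(g_\tau,0;\Phi_\infty^{-1}\otimes\lambda_f(\phi_\mu))$. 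Multiplying the specialized Siegel-Weil identity by $v^{1/2}$ and recognising the resulting factor as the prefactor $v^{-\ell/2}$ appearing in the definition \eqref{eisvector} of $E_L(\tau,s;\ell)$ at $\ell=-1$ produces the $\phi_\mu$-component of the desired identity. Summing over $\mu\in L'/L$ via \eqref{theta2.5} completes the proof.

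The only genuine subtlety is the compatibility between the Tamagawa measure on $\SO(V)(\A)$ implicit in Theorem \ref{Siegel-Weil} and the normalization fixed just above the proposition. Since the stated condition $\vol(\SO(V)(\Q)\bs\SO(V)(\A_f))=2$ already encodes the Tamagawa number of anisotropic $\SO(2)$, this matching is automatic, and no extra scalar appears in the final identity.
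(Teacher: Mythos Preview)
Your proposal is correct and follows essentially the same approach as the paper: apply Theorem~\ref{Siegel-Weil} componentwise with $\varphi=\varphi_\infty(\cdot,z_V)\otimes\phi_\mu$, use the $\SO(V)(\R)$-invariance of the Gaussian (together with $\vol(\SO(V)(\R))=1$) to reduce the adelic integral to one over $\SO(V)(\A_f)$, invoke \eqref{gaussian} to identify the Archimedian section as $\Phi_\infty^{-1}(s_0)$, and then sum over $\mu$. Your additional remarks on the classical-variable conversion and the Tamagawa measure compatibility are correct and simply make explicit what the paper leaves implicit.
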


Following \cite[\S IV.2]{Ku1},  we write down this Eisenstein series
more classically. It is easily seen that $P(\Q)\bs
G(\Q)=\Gamma_\infty \bs \Gamma$, where $\Gamma_\infty=P(\Q)\cap
\Gamma$. Hence we have
\[
E(g_\tau, s;\Phi)= \sum_{\gamma\in \Gamma_\infty\bs \Gamma}
\Phi(\gamma g_\tau,s).
\]
For $\gamma=\kabcd\in \Gamma$ we consider the Iwasawa decomposition
$\gamma g_\tau = n m(\alpha) k_\theta$, where $\alpha\in \R_{>0}$
and $\theta\in \R$. A calculation shows that
\begin{align*}
\alpha&=v^{1/2}|c\tau+d|^{-1},\\
e^{i\theta} &= \frac{c\bar\tau+d}{|c\tau+d|}.
\end{align*}
Inserting this into \eqref{iwphi}, we see that
\[
\Phi_\infty^\ell(\gamma g_\tau,s) = v^{s/2+1/2}(c\tau+d)^{-\ell}
|c\tau+ d|^{\ell-s-1}.
\]
Therefore we obtain
\begin{align*}
E(g_\tau,s,\Phi^\ell_\infty\otimes \lambda_f(\phi_\mu))
&=\sum_{\gamma\in \Gamma_\infty\bs\Gamma}
(c\tau+d)^{-\ell}\frac{v^{s/2+1/2}}{|c\tau+d|^{s+1-\ell}}\cdot \lambda_f(\phi_\mu)(\gamma)\\
&=\sum_{\gamma\in \Gamma_\infty\bs\Gamma}
(c\tau+d)^{-\ell}\frac{v^{s/2+1/2}}{|c\tau+d|^{s+1-\ell}}\cdot
\langle \phi_\mu,(\omega_f^{-1}(\gamma))\phi_0\rangle .
\end{align*}
Here $\langle\cdot ,\cdot \rangle$ denotes the $L^2$ scalar product
on $S_L$. Consequently, for the vector valued Eisenstein series we
find
\begin{align}
E_L(\tau,s; \ell)=\sum_{\gamma\in \Gamma_\infty\bs\Gamma}
\left[\Im(\tau)^{(s+1-\ell)/2}\phi_0\right]\mid_{\ell,\rho_L}
\gamma,
\end{align}
where $\mid_{\ell,\rho_L}$ is the usual Petersson slash operator of
weight $\ell$ for the representation $\rho_L$. In particular we see
that this Eisenstein series coincides up to a shift in the argument
with the Eisenstein series considered in \cite[\S3]{BK}.

Let $L_\ell=-2iv^2\frac{\partial}{\partial \bar \tau}$ be the Maass
lowering operator, and let $R_\ell=2i\frac{\partial}{\partial
\tau}+\ell v^{-1}$ be the Maass raising operator in weight $\ell$.
It is easily seen that
\begin{align*}
L_\ell
E_L(\tau,s; \ell) &= \frac{1}{2}(s+1-\ell) E_L(\tau,s; \ell-2),\\
R_\ell E_L(\tau,s; \ell) &= \frac{1}{2}(s+1+\ell) E_L(\tau,s;
\ell+2)
\end{align*}
(see also \cite[ Lemma 2.7]{Ku4}). In particular, we see that
\begin{align}
\label{eis1} L_1 E_L(\tau,s; 1) = \frac{s}{2} E_L(\tau,s; -1).
\end{align}
Since $E_L(\tau,s; -1)$ is holomorphic at $s=0$ by Theorem
\ref{Siegel-Weil}, we find that  $E_L(\tau,s; 1)$ vanishes at $s=0$,
the center of symmetry. This corresponds to the fact that
$E_L(\tau,s; 1)$ is an incoherent Eisenstein series, see \cite{Ku2},
because it is constructed at all finite places from the data
corresponding to the quadratic space $(V,Q)$, but at the Archimedian
place one takes $(V,-Q)$. In particular $E_L(\tau,s; 1)$ satisfies
an odd functional equation under $s\mapsto -s$, which explains
the vanishing at $s=0$ (see also Proposition \ref{prop:fueq}).
The identity \eqref{eis1} implies that
\begin{align}
\label{eis2} L_1 E_L'(\tau,0; 1) = \frac{1}{2} E_L(\tau,0; -1),
\end{align}
where  $E_L'(\tau,s; 1)$ denotes the derivative of  $E_L(\tau,s; 1)$
with respect to $s$. This identity can be written in terms of
differential forms as follows.

\begin{lemma}
\label{eis3} We have
\[
-2  \bar\partial\left(E_L'(\tau,0; 1)\, d\tau\right) = E_L(\tau,0;
-1)\, d\mu(\tau).
\]
\end{lemma}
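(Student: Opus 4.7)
The plan is to translate the scalar identity \eqref{eis2} into the differential-form language using the very definition of the Maass lowering operator $L_\ell=-2iv^2\partial/\partial\bar\tau$.

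First I would compute $\bar\partial$ of a $(1,0)$-form directly. Writing $f(\tau)=E_L'(\tau,0;1)$ and using $\bar\partial(f\,d\tau)=\frac{\partial f}{\partial\bar\tau}\,d\bar\tau\wedge d\tau$, the definition of the lowering operator gives $\partial f/\partial\bar\tau = -\frac{1}{2iv^2}L_1 f$, hence
\[
-2\,\bar\partial(f\,d\tau)\;=\;-2\cdot\frac{-1}{2iv^2}L_1 f\,\,d\bar\tau\wedge d\tau\;=\;\frac{L_1 f}{iv^2}\,d\bar\tau\wedge d\tau.
\]

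Next I would rewrite the right hand side in terms of the hyperbolic measure. With $\tau=u+iv$ one has $d\bar\tau\wedge d\tau=2i\,du\wedge dv$, so $\frac{1}{iv^2}\,d\bar\tau\wedge d\tau=\frac{2\,du\,dv}{v^2}=2\,d\mu(\tau)$. Substituting back yields
\[
-2\,\bar\partial\bigl(E_L'(\tau,0;1)\,d\tau\bigr)\;=\;2\,L_1\bigl(E_L'(\tau,0;1)\bigr)\,d\mu(\tau).
\]

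Finally I would invoke the identity \eqref{eis2}, namely $L_1E_L'(\tau,0;1)=\tfrac{1}{2}E_L(\tau,0;-1)$, which is immediate from applying $L_1$ to \eqref{eis1} and evaluating at $s=0$ (using that $E_L(\tau,s;1)$ vanishes at $s=0$). Combining the two previous displays gives $-2\,\bar\partial(E_L'(\tau,0;1)\,d\tau)=E_L(\tau,0;-1)\,d\mu(\tau)$, as required.

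There is really no main obstacle here; the statement is a formal consequence of \eqref{eis2}. The only thing to be careful about is a possible sign error in rewriting $d\bar\tau\wedge d\tau$ in terms of $du\,dv/v^2$ and in the normalization $L_\ell=-2iv^2\partial/\partial\bar\tau$, but these constants line up exactly.
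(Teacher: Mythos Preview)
Your proof is correct and is exactly the intended argument: the paper gives no explicit proof of this lemma, presenting it immediately after \eqref{eis2} as the differential-form reformulation of that identity, and your computation supplies precisely the routine translation via $L_1=-2iv^2\,\partial/\partial\bar\tau$ and $d\bar\tau\wedge d\tau=2i\,du\wedge dv$.
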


As in \cite{Scho} we write the Fourier expansion of the Eisenstein
series in the form
\begin{align}
\label{E1} E_L(\tau,s; 1) = \sum_{\mu\in L'/L} \sum_{m\in \Q}
A_\mu(s,m,v) q^m \phi_\mu,
\end{align}
where $q=e^{2\pi i \tau}$ as usual. The coefficients $A_\mu(s,m,v)$
are computed in \cite{KY2}, \cite{KRY1}, \cite{Scho}, and \cite{BK}.
The formulas we will need later are summarized in Theorem
\ref{thm:kappaexplicit} below.
 Notice that
$A_\mu(s,m,v)=0$ unless $m\in Q(\mu)+\Z$. Since the Eisenstein
series vanishes at $s=0$, the coefficients have a Laurent expansion
of the form
\begin{align}
\label{A1} A_\mu(s,m,v)=b_\mu(m,v)s+O(s^2)
\end{align}
 at $s=0$, and we have
\begin{align}
\label{E1'} E_L'(\tau,0; 1)=\sum_{\mu\in L'/L} \sum_{m\in \Q}
b_\mu(m,v) q^m \phi_\mu.
\end{align}
For the evaluation of an automorphic Green function at a CM cycle,
the following quantities play a key role:
\begin{align}
\label{kappa1} \kappa(m,\mu):=\begin{cases}
\lim_{v\to \infty} b_\mu(m,v),& \text{if $m\neq 0$ or $\mu\neq 0$,}\\
\lim_{v\to \infty} b_0(0,v)-\log(v),& \text{if $m=0$ and $\mu=0$.}\\
\end{cases}
\end{align}
According to \cite[Proposition 2.20 and Lemma 2.21]{Scho},  (see
also \cite[Theorem 2.12]{Ku4}), the limits exist. If $m>0$, then
$b_\mu(m,v)$ is actually independent of $v$ and equal to
$\kappa(m,\mu)$. We also have $\kappa(m, \mu)=0$ for $m<0$ or $m=0$,
$\mu \ne 0$.
Using the
quantities $\kappa(m,\mu)$ we define a holomorphic $S_L$ valued
function on $\H$ by
\begin{align}
\label{eq:calE} \calE_L(\tau)=\sum_{\mu\in L'/L} \sum_{m\in \Q}
\kappa(m,\mu) q^m \phi_\mu.
\end{align}
This function is clearly periodic, but it is not invariant under the
$\mid_{1,\rho_L}$-action of $S\in \Gamma'$.

\begin{remark}
Another way of interpreting \eqref{eis2} is that $E'_L(\tau,0;1)$ is
a harmonic weak Maass form (see Section \ref{sect:3.1}) of weight
$1$ which is mapped to $v^{-1} \overline{E_L(\tau,0;-1)}$ under
$\xi$. The function $\calE_L(\tau)$ is simply the holomorphic part
of $E'_L(\tau,0;1)$.
\end{remark}

Now we assume that  $(L, Q)=(\mathfrak a,
-\frac{\norm}{\norm(\mathfrak a)})$ where $\mathfrak a$ is a
fractional ideal of an imaginary quadratic field $k=\mathbb Q(\sqrt
D)$ with fundamental discriminant $D \equiv 1 \pmod{4}$.
We denote by $\calO_k$ the ring of integers in $k$, and write $\partial$ for the different of $k$.
In this case,
$V=k$,
the dual lattice is given by
$L'=\partial^{-1}\mathfrak a$, and
$$
L'/L =\partial^{-1}\mathfrak a/\mathfrak a\cong
\partial^{-1}/\calO_k\cong\Z/D\Z.
$$

\begin{proposition}
\label{prop:fueq}
Let the notation be as above. Let $\chi_D$ be the
quadratic Dirichlet character associated to $k/\mathbb Q$, and let
$$
\Lambda(\chi_D, s) =|D|^{\frac{s}2} \Gamma_{\mathbb R}(s+1) L( \chi_D,
s), \quad \Gamma_{\mathbb R}(s) =\pi^{-\frac{s}2} \Gamma(\frac{s}2)
$$
be its complete $L$-function. Let
$$
E_L^*(\tau, s) = \Lambda(\chi_D, s+1) E_L(\tau, s),
$$
then
$$
E_L^*(\tau, s) =-E_L^*(\tau, -s).
$$
\end{proposition}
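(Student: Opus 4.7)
The plan is to deduce this functional equation from the standard adelic one for Eisenstein series on $\SL_2(\A)$,
\[
E(g, s, \Phi) = E(g, -s, M(s)\Phi),
\]
where $M(s) = \prod_v M_v(s)$ is the global standard intertwining operator. I would apply it to the standard sections $\Phi_\mu = \Phi_\infty^1 \otimes \lambda_f(\phi_\mu) \in I(s, \chi_V)$ used in the construction of $E_L(\tau, s; 1)$, sum the resulting identities over $\mu$ against $\phi_\mu$, and read off the functional equation for $E_L^*$.

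Before entering the local computations, I would identify the character $\chi_V$ attached to the signature-$(0,2)$ space $V = (\fraka, -\norm/\norm(\fraka))$ with the Kronecker character $\chi_D$ of $k/\Q$: since $\dim V = 2$, a direct Gram determinant calculation on a $\Z$-basis of $\fraka$ shows that $(-1)^{\dim V/2}\det(V)$ represents the class of $D$ in $\Q^\times/(\Q^\times)^2$, so $\chi_V(x) = (x, D)_\A = \chi_D(x)$ and the completed $L$-function appearing in the normalization is indeed $\Lambda(\chi_D, \cdot)$.

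The core computation is to determine the local intertwining operators $M_v(s)$ acting on our specific sections. At each finite prime $p$, the operator $M_p(s)$ acts on the image $\lambda_p(S(V_p))$ as a linear map whose matrix coefficients are expressible in terms of the local $L$-factor $L_p(\chi_D, s)$; a careful bookkeeping (compatible with the explicit Fourier coefficient calculations in \cite{KY2}, \cite{KRY1}, \cite{BK}) shows that the product over finite places aggregates to the global factor $L(\chi_D, s)/L(\chi_D, s+1)$. At the Archimedean place $M_\infty(s)$ preserves $K_\infty$-types, so $M_\infty(s)\Phi_\infty^1 = c_\infty(s)\Phi_\infty^1(\cdot, -s)$ for an explicit ratio of Gamma factors. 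The crucial point is that the weight $\ell = 1$ at infinity is the one coming from the opposite signature $(2,0)$ rather than from the global signature $(0,2)$ of $V$; this parity mismatch forces $c_\infty(s)$ to pick up an additional factor of $-1$. This is the local Archimedean sign responsible for the incoherence of the section in the sense of \cite{Ku2}, and it is the same sign that forces the vanishing at $s=0$ already observed in \eqref{eis1}.

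Assembling the contributions, and using that the finite intertwining matrix is compatible with the natural symmetry $\mu \mapsto -\mu$ on $S_L$ under which $E_L(\tau, s; 1)$ is invariant, the global intertwining factor collapses to the scalar $-\Lambda(\chi_D, -s+1)/\Lambda(\chi_D, s+1)$. Thus
\[
E_L(\tau, s; 1) = -\,\frac{\Lambda(\chi_D, -s+1)}{\Lambda(\chi_D, s+1)}\, E_L(\tau, -s; 1),
\]
and multiplying through by $\Lambda(\chi_D, s+1)$ yields $E_L^*(\tau, s) = -E_L^*(\tau, -s)$. The main obstacle is the careful bookkeeping of local factors — verifying at the ramified primes dividing $D$ that the local $M_p(s)$ recombines with the Schwartz data to reproduce the advertised Euler factor of $\Lambda(\chi_D, \cdot)$, and pinning down the Archimedean sign precisely, since it is the substantive input distinguishing this incoherent Eisenstein series from a coherent one.
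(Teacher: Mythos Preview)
Your approach is correct and essentially identical to the paper's: both start from the Langlands functional equation $E(g,s,\Phi)=E(g,-s,M(s)\Phi)$, compute the local intertwining operators on $\Phi_\infty^1\otimes\lambda_f(\phi_\mu)$ place by place, and identify the Archimedean sign as the incoherence of the section. The paper does explicitly the ramified computation you flag as the main obstacle, showing for $p\mid D$ that $M_p(s)\Phi_\mu=\gamma_p(V)\vol(L_p)\Phi_\mu(\cdot,-s)$ already as a scalar on each $\Phi_\mu$ (so your $\mu\mapsto-\mu$ symmetry step is unnecessary), and then assembles the global factor via $\prod_{p\le\infty}\gamma_p(V)=1$ and $\prod_{p\mid D}\vol(L_p)=|D|^{-1/2}$.
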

\begin{proof}  It is equivalent to prove the
equation for each $E(\tau, s, \mu)=E(\tau, s, \Phi_\infty^1 \otimes
\lambda_f(\phi_\mu)$. By Langlands' general theory of Eisenstein
series, one has
$$
E(g, s, \Phi) = E(g, -s, M(s)\Phi).
$$
Here $M(s) =\prod_{p\le \infty} M_p(s): I(s, \chi_D) \rightarrow
I(-s, \chi_D)$ is the usual intertwining operator given by (when
$\Re (s) \gg 0$)
\begin{equation} \label{eqN4.20}
 M_p(s) \Phi_p(g, s) = \int_{\mathbb Q_p} \Phi_p(wn(b)g, s) db
 \end{equation}
for $\Phi_p \in I_p(s, \chi_D)$, where $I_p(s, \chi_D)$ is the local
principal series.

When $p=\infty$, it is well-known that
\begin{equation}  \label{eqN4.21}
M_\infty(s) \Phi_\infty^1(g, s) = C_\infty(s) \Phi_\infty^1(g, -s)
\end{equation}
with
\begin{equation}
C_\infty(s) =M_\infty(s) \Phi_\infty^1(1,s).
\end{equation}
It is also known (see for example \cite[Proposition 2.6]{KRY1}) that
\begin{equation} \label{eqN4.23}
C_\infty(s) = -\gamma_\infty(V) \frac{\Gamma_\mathbb
R(s+1)}{\Gamma_\mathbb R(s+2)}
\end{equation}
where $\gamma_\infty(V) =-\gamma_\infty(-V)=i$ is the local Weil
index associated to the local Weil representation $\omega_V$ of
$\SL_2(\mathbb R)$ on the Schwartz space $S(V\otimes \mathbb R)$
with respect to the dual pair $(\Orth(V), \SL_2)$. The fact we need
here is that $\gamma_\infty(V)=-\gamma_\infty(-V)$, not the explicit
formula, and $\Phi_\infty^1$ coming from the dual pair $(\Orth(-V),
\SL_2)$.

When $p \nmid D\infty$, $L$ is unimodular, and it is  well-known
(see \cite[Section 2]{KRY1}) that
\begin{equation} \label{eqN4.24}
M_p(s) \Phi_\mu(g, s) = C_p(s) \Phi_\mu(g, -s)
\end{equation}
with
\begin{equation}
C_p(s) =\frac{L_p(\chi_D, s)}{L_p(\chi_D, s+1)}.
\end{equation}
When $p|D$ and $\mu=0$, the intertwining operator is also computed
in  \cite[Sections 2, 3]{KRY1}. We do it here in general. Let
$$
K_0(p)=\{ g=\abcd \in \SL_2(\mathbb Z_p) ;  \quad c \equiv 0 \pmod{p}\}.
$$
Every $g \in K_0(p)$ can be written as product
$$
g=n_-(pc)n(b) m(a), \quad  n_-(c) =\kzxz {1} {0} {c} {1}, \quad n(b)
=\kzxz {1} {b} {0} {1}, \quad m(a) =\kzxz {a} {0} {0} {a^{-1}}
$$
with $a \in \mathbb Z_p^*$, $b, c \in \mathbb Z_p$. Let $w=\kzxz {0}
{-1} {1} {0}$, then
$$
\SL_2(\mathbb Z_p) =K_0(p) \cup B(\mathbb Z_p) w N(\mathbb Z_p),
$$
where
$$
B (\mathbb Z_p)=\{ m(a) ; \; a \in \mathbb Z_p^*\}, \quad N(\mathbb
Z_p)=\{ n(b); \; b \in \mathbb Z_p\}.
$$
Notice also that $\SL_2(\mathbb Q_p) =B(\mathbb Q_p) \SL_2(\mathbb
Z_p)$. So $\Phi_p \in I(s, \chi_D)$ is determined by its value in
$K_p=\SL_2(\mathbb Z_p)$. Recall locally that
$$
\Phi_\mu(g, s)=|a(g)|^s(\omega_V(g)\phi_\mu)(0),
$$
where $\phi_\mu = \hbox{char}(\mu +L_p)$ is the $p$-part of
$\phi_\mu$ and $|a(g)| =|a|$ if $g =n(b) m(a) k$ with $k \in
\SL_2(\mathbb Z_p)$. One can check that
\begin{align}
\Phi_\mu(g n(b)) &=\psi(bQ(\mu)) \Phi_\mu(g), \quad b \in \mathbb
Z_p,  \notag
\\
 \Phi_\mu(g m(a)) &= \chi_D(a) \Phi_{a^{-1}\mu}(g), \quad a \in
 \mathbb Z_p^*,
 \\
  \Phi_\mu(g n_-(pc)) &= \Phi_\mu(g), \quad c \in \mathbb Z_p,
  \notag
  \\
   \Phi_\mu(gw) &=\gamma_p(V) \vol(L_p) \sum_{\lambda \in L_p'/L_p}
   \psi_p(-(\mu, \lambda)) \Phi_\lambda(g), \notag
   \end{align}
   since $\omega_V(g)\phi_\mu$ satisfies similar equations.
Here $\psi=\prod_p \psi_p$ is the `canonical' additive character of
$\mathbb Q_\A$, and $\vol(L_p)=[L_p':L_p]^{-\frac{1}2}$ is the
measure of $L_p$ with respect to the self-dual Haar measure on $L_p$
(with respect to $\psi_p$). By definition, it is easy to see that
$\tilde\Phi_\mu(g, -s) = M(s) \Phi_\mu(g, s) \in I_p(-s, \chi_D)$
satisfies the same equations. So both $\Phi_\mu$ and
$\tilde\Phi_\mu$ are determined by their values at $g=1$.
A  simple computation gives
  $$
  \tilde\Phi_\mu(1) = \gamma_p(V) \vol(L_p) \delta_{\mu,
  0}=\gamma_p(V) \vol(L_p) \Phi_\mu(1).
  $$
  Since both functions satisfy the same set of equations and are
  determined by their values at $g=1$, one has
 \begin{equation} \label{eqN4.28}
  M_p(s)\Phi_{\mu}(g, s)=\tilde\Phi_\mu(g, -s) =\gamma_p(V) \vol(L_p) \Phi_\mu(g, -s).
  \end{equation}
Combining (\ref{eqN4.21})--(\ref{eqN4.28})  together with
\begin{align*}
\prod_{p|D} \vol(L_p) =D^{-\frac{1}2}, \qquad  \prod_{p\le \infty}
\gamma_p(V) =1,
\end{align*}
one sees that
$$
E(g, s, \Phi_\infty^1 \otimes \lambda_f(\phi_\mu))
 =\frac{\Lambda(\chi_D, s)}{\Lambda(\chi_D, s+1)} E(g, -s,
 \Phi_\infty^1\otimes \lambda_f(\phi_\mu)).
 $$
 This proves the proposition since
 $
 \Lambda(\chi_D, s) =\Lambda(\chi_D,1- s).
 $
 \end{proof}

We end this section with a theorem of Schofer \cite[Theorem
4.1]{Scho}, which will be used later. 

\begin{theorem}\label{theoY2.5}
\label{thm:kappaexplicit} Let the notation be as above, and  let
$h_k$ be the class number of $k=\mathbb Q(\sqrt D) $. Write
$\chi=(D, \cdot)_\A=\prod_p \chi_p$ as a product of local quadratic
characters. Let $\mu \in L'/L$ and $m >0$ such that $m \in Q(\mu)
+\mathbb Z$. Then
\begin{align*}
-\Lambda(\chi_D,1) \kappa(m, \mu)
 &=  \eta_0(m, \mu)  \sum_{\text{$p$ inert} } (\ord_p(m) +1) \rho(m|D|/p) \log p
  \\
   &\qquad +  \rho(m|D|) \sum_{p|D} \eta_p(m, \mu)
(\ord_p(m) +1) \log p
  \end{align*}
  where
\begin{align*}
\eta_p(m, \mu) &=(1-\chi_p(-m\norm(\mathfrak a))) \prod_{\substack{q|D, q\ne p\\
\mu_q =0}} (1+ \chi_q(-m\norm(\mathfrak a))),\\
\eta_0(m, \mu) &= \prod_{\substack{q|D\\
\mu_q =0}} (1+ \chi_q(-m\norm(\mathfrak a))).
\end{align*}
Here  we take $\eta_0(m, \mu)=1$ and $\eta_p(m, \mu)=0$ if $\mu_q\ne
0$ for all $q|D$. Finally,
$$
\rho(n) = \#\{ \mathfrak b \subset \calO_k; \; \norm(\mathfrak b)
=n\}.
$$
 We also have
$$
\kappa(0, 0)= \log|D| - 2 \frac{\Lambda'( \chi_{D}, 0)}{\Lambda(
\chi_{D}, 0)}.
$$
\end{theorem}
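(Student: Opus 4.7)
The strategy I would follow is the standard Langlands-style explicit Fourier expansion of the Eisenstein series $E_L(\tau,s;1)$, factoring each Whittaker coefficient as a product of local Whittaker integrals, and then extracting the derivative at $s=0$ by identifying which local factors vanish there. Concretely, I would start from the Bruhat decomposition
\[
E_L(\tau,s;1) = \text{const.\ term} + \sum_{m\in\Q}\sum_{\mu} W_{m,\mu}(g_\tau,s)\,q^m\phi_\mu,
\]
where the non-constant Whittaker coefficient factors as $W_{m,\mu,\infty}(g_\tau,s)\prod_p W_{m,\mu,p}(s)$. The first task is to compute each local factor in closed form.

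At the archimedean place, $\Phi_\infty^1$ is the standard weight-one section, so the Whittaker function $W_{m,\infty}(g_\tau,s)$ is a classical confluent hypergeometric expression; for $m>0$ one checks that it is regular and nonzero at $s=0$ and yields the factor $e^{2\pi i m\tau}$ in the limit $v\to\infty$ (so it contributes only normalization constants to $\kappa(m,\mu)$). At the unramified finite primes $p\nmid D$, the lattice $L_p$ is self-dual and $W_{m,\mu,p}(s)$ is computable by the standard formula for the local representation density of the quadratic form $-N/N(\mathfrak a)$ on $L_p$; one gets an expression that, at $s=0$, collapses into the local factor of the ideal counting function $\rho(\cdot)$ together with an Euler factor of $\Lambda(\chi_D,s+1)^{-1}$. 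This is where the prefactor $-\Lambda(\chi_D,1)\kappa(m,\mu)$ naturally appears after dividing through.

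The incoherence of $E_L(\tau,s;1)$ (built from $(V,Q)$ at finite places but from $(V,-Q)$ at infinity) forces the product of local Whittaker values at $s=0$ to be zero: for each non-zero $m>0$ at least one local $W_{m,\mu,p}(0)$ vanishes. Carrying out the ramified local computation in Proposition~\ref{prop:fueq} in the manner used there (writing $K_p$-invariant sections via $K_0(p)\cup B(\mathbb Z_p)wN(\mathbb Z_p)$ and tracking the Weil representation values) shows that the vanishing at $s=0$ occurs at exactly the primes where $\chi_p(-mN(\mathfrak a))=-1$ (the inert primes with odd $\ord_p$-parity relative to $m$) and at ramified primes $p\mid D$ with $\mu_p=0$ for which $\chi_p(-mN(\mathfrak a))=-1$. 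Consequently, only one such local factor can vanish (by a parity/product-formula argument from $\prod_v \chi_v=1$), so the derivative in $s$ is a sum over those primes of $\log p$ times the product of the non-vanishing neighbouring factors, explaining the appearance of $\eta_0(m,\mu)$, $\eta_p(m,\mu)$, and the factor $(\ord_p(m)+1)$ from differentiating the local polynomial at $s=0$.

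Finally, I would assemble these local pieces: the product of the non-vanishing $p\nmid D$ factors, rearranged using $\rho(n)=\prod_p(\text{local density})$, collapses to $\rho(m|D|/p)$ or $\rho(m|D|)$ depending on whether the vanishing happens at an inert or a ramified prime; the archimedean factor accounts for the missing $\Gamma_\R$ normalization in $\Lambda(\chi_D,1)$; and the limit $v\to\infty$ of $b_\mu(m,v)$ kills any decaying non-holomorphic contributions from $W_{m,\infty}$. The constant term formula for $\kappa(0,0)$ is obtained separately by identifying $A_0(s,0,v)$ with $v^{s/2}+$ (intertwiner image), applying Proposition~\ref{prop:fueq}, and differentiating $\Lambda(\chi_D,s)/\Lambda(\chi_D,s+1)$ at $s=0$. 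The main obstacle I anticipate is the bookkeeping at the ramified primes $p\mid D$—correctly computing $M_p(s)\Phi_\mu$ on all components $\mu\in L'_p/L_p$ and matching signs of Weil indices so that the product formula $\prod_p\gamma_p(V)=1$ cleanly yields the global factor $\Lambda(\chi_D,1)$ on the left-hand side.
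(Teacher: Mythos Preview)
Your outline is essentially the proof strategy that underlies Schofer's computation, and it is correct in broad strokes. However, the paper does not reprove this result: its ``proof'' consists entirely of citing \cite[Theorem~4.1]{Scho} for the $m>0$ case and \cite[Lemma~2.21]{Scho} for $\kappa(0,0)$, together with the one-line observation that Schofer's formula (stated for $D<-3$) extends to all odd fundamental $D$ once one replaces $h_k$ by $\Lambda(\chi_D,1)=\tfrac{2}{w_k}h_k$. So you are doing considerably more than the paper does here; what you describe is the content of the cited reference rather than of this paper's argument.

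One point in your sketch deserves correction. You write that ``only one such local factor can vanish (by a parity/product-formula argument from $\prod_v\chi_v=1$).'' This is not right: several local Whittaker factors can vanish simultaneously at $s=0$, and when that happens $\kappa(m,\mu)=0$ because the first derivative of the product still vanishes. The correct statement is that differentiating the product $\prod_p W_{m,\mu,p}(s)$ at $s=0$ yields a sum over primes $p$ of $W'_{m,\mu,p}(0)\prod_{q\ne p}W_{m,\mu,q}(0)$, and the factors $\eta_0(m,\mu)$ and $\eta_p(m,\mu)$ in the theorem are precisely the products $\prod_{q\ne p}(\cdots)$ of the nonvanishing local values; these products are themselves zero whenever a second local factor vanishes. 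The product formula enters only to guarantee that the global sign is $-1$ (incoherence), forcing at least one local vanishing and hence $E_L(\tau,0;1)=0$, not to bound the number of vanishing places by one.
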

\begin{proof} The formula for $\kappa(0, 0)$ follows from
\cite[Lemma 2.21]{Scho}. The other formula is \cite[Theorem
4.1]{Scho} when $D<-3$. Looking into his proof, the formula is true
in general for $D \equiv 1 \pmod{4}$ if we replace $h_k$ by
$\Lambda(\chi_{D}, 1)$. Notice that
\begin{equation} \label{neweq2.33}
\Lambda(\chi_D, 1) = \frac{\sqrt{|D|}}{\pi} L(\chi_D, 1)=
\frac{2}{w_k} h_k.
\end{equation}
Here $w_k$ is the number of roots of unity in $k$.
\end{proof}

\section{Vector valued modular forms}
\label{sect:3}

Let $(V,Q)$ be a quadratic space as in Section \ref{sect:2}, and let
$L\subset V$ be an even lattice.
In this section we make no restriction on the signature $(b^+,b^-)$ of $V$.
We consider the subspace $S_L$ of Schwartz functions  in
$S(V(\A_f))$ which are supported on $\hat{L'}=L'\otimes \hat \Z$ and
which are constant on cosets of $\hat L$. For any $\mu\in L'/L$, we
write $\phi_\mu$ for the characteristic function of $\mu+\hat L$.

We use $\tau$ as a standard variable on $\H$ and write $u$ for its
real and $v$ for its imaginary part. If $f:\H\to S_L$ is a function,
we write $f=\sum_{\mu\in L'/L} f_\mu \phi_\mu$ for its decomposition
in components with respect to the standard basis $(\phi_\mu)$ of
$S_L$. Let $k\in \frac{1}{2}\Z$, and assume for simplicity that
$k\equiv \frac{b^+-b^-}{2}\pmod{2}$.
Let $\rho=\rho_L$ be the Weil
representation of $\Gamma'=\Mp_2(\Z)$ on $S_L$, see \eqref{eq:rhol}.
We denote by  $A_{k,\rho}$ the space of $S_L$-valued $C^\infty$
modular forms of weight $k$ for $\Gamma'$ with representation
$\rho$. The subspaces of weakly holomorphic modular forms
(resp.~holomorphic modular forms, cusp forms) are denoted by
$M^!_{k,\rho}$  (resp.~$M_{k,\rho}$, $S_{k,\rho}$). We need a few
facts about such vector valued modular forms.

If $f\in A_{k,\rho}$ and  $g\in A_{-k,\bar\rho}$, then the scalar
valued function
\begin{align}
\langle f(\tau), g(\tau)\rangle = \sum_{\mu\in L'/L} f_\mu(\tau)
g_\mu(\tau)
\end{align}
is invariant under $\Gamma'$. For $f,g\in A_{k,\rho}$, we define the
Petersson scalar product by
\begin{align}
(f, g)_{Pet}= \int_{\calF}\langle f,\bar g \rangle v^k \,
d\mu(\tau),
\end{align}
provided the integral converges. Here $d\mu (\tau)=\frac{du\,
dv}{v^2}$ is the invariant measure on $\H$, and $\calF= \{\tau\in
\H; \; \text{$|u|\leq 1/2$ and $|\tau|\geq 1$}\}$ denotes the
standard fundamental domain for the action of $\Gamma$ on $\H$.

Let $K$ and $L$ be even lattices. Then the Weil representation of
$K\oplus L$ is isomorphic to the tensor product of $\rho_K$ and
$\rho_L$. Moreover, if $f= \sum_{\mu\in K'/K} f_\mu
\phi_\mu \in A_{k,\rho_K}$ and $g= \sum_{\nu\in L'/L} g_\nu
\phi_\nu\in A_{l,\rho_L}$, then
\[
f\otimes g = \sum_{\mu,\nu} f_\mu g_\nu \phi_{\mu+\nu}\in A_{k+l,\rho_{K\oplus L}}.
\]

Let $M\subset L$ be a sublattice of finite index, then a vector
valued modular form $f \in A_{k,\rho_L}$ can be naturally viewed as
a vector valued modular form in $A_{k, \rho_M}$. Indeed, we have the
inclusions $M\subset L\subset L'\subset M'$ and therefore
\[
L/M\subset L'/M\subset M'/M.
\]
We have the natural map $L'/M\to L'/L$, $\mu\mapsto \bar \mu$.
\begin{lemma}
\label{sublattice} There are  two natural maps
$$
\operatorname{res}_{L/M}:A_{k,\rho_L} \rightarrow  A_{k,\rho_M},\quad
f\mapsto f_M
$$
and
$$  \tr_{L/M}: A_{k,\rho_M}\rightarrow  A_{k,\rho_L}, \quad g
\mapsto g^L
$$
such that for any $f \in A_{k,\rho_L}$ and $g\in A_{k,\rho_M}$
$$
\langle f, \bar g^L\rangle =\langle f_M, \bar g \rangle.
$$
They are given as follows. For  $\mu\in M'/M$ and $f \in
A_{k,\rho_L}$,
\[
(f_M)_\mu=\begin{cases} f_{\bar\mu},&\text{if $\mu\in L'/M$,}\\
0,&\text{if $\mu\notin L'/M$.}
\end{cases}
\]
For any $\bar\mu \in L'/L$, and $g \in A_{k,\rho_M}$, let $\mu$ be a
fixed preimage of $\bar\mu$ in $L'/M$. Then
$$
(g^L)_{\bar\mu} =\sum_{\alpha \in L/M} g_{\alpha +\mu}.
$$
\end{lemma}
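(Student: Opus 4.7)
The plan is to realize both maps as natural inclusion and dualization operations at the level of ambient Schwartz functions on $V(\A_f)$, and then exploit the fact that $\rho_L$ and $\rho_M$ are simply the restrictions of the ambient Weil representation $\bar\omega_f$ of $K'$ on $S(V(\A_f))$ to the subspaces $S_L$ and $S_M$ respectively.

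First I would observe that $S_L \subset S_M$ as subspaces of $S(V(\A_f))$: since $\hat M\subset\hat L$ and $\hat{L'}\subset\hat{M'}$, a function supported on $\hat{L'}$ and constant on cosets of $\hat L$ is a fortiori supported on $\hat{M'}$ and constant on cosets of $\hat M$. Under this inclusion, the basis element $\phi^L_{\bar\mu}$ decomposes as $\sum_{\alpha\in L/M}\phi^M_{\alpha+\mu}$, where $\mu\in L'/M$ is any preimage of $\bar\mu$ and the sum is independent of the choice. This exhibits $\operatorname{res}_{L/M}$ as the map induced on $S$-valued $C^\infty$ functions by the inclusion $\iota\colon S_L\hookrightarrow S_M$; the formula $(f_M)_\nu = f_{\bar\nu}$ for $\nu\in L'/M$ and $0$ otherwise comes from rewriting $f=\sum_{\bar\mu}f_{\bar\mu}\phi^L_{\bar\mu}=\sum_{\nu\in L'/M}f_{\bar\nu}\phi^M_\nu$ in the $S_M$-basis. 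Since both $\rho_L$ and $\rho_M$ are restrictions of $\bar\omega_f$, the inclusion $\iota$ is automatically $\Gamma'$-equivariant, so $f_M\in A_{k,\rho_M}$ whenever $f\in A_{k,\rho_L}$.

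Next I would identify $\operatorname{tr}_{L/M}$ with the adjoint $\iota^*\colon S_M\to S_L$ of $\iota$ with respect to the standard bilinear pairing $\langle\phi_\mu,\phi_\nu\rangle=\delta_{\mu\nu}$. For $\nu\in M'/M$ the computation
\[
\langle\iota(\phi^L_{\bar\mu}),\phi^M_\nu\rangle = \Big\langle\sum_{\alpha\in L/M}\phi^M_{\alpha+\mu},\phi^M_\nu\Big\rangle = \begin{cases} \delta_{\bar\nu,\bar\mu},& \nu\in L'/M,\\ 0,& \text{else,}\end{cases}
\]
gives $\iota^*(\phi^M_\nu)=\phi^L_{\bar\nu}$ if $\nu\in L'/M$ and $0$ otherwise. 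Expanding on a general $g=\sum_\nu g_\nu \phi^M_\nu$ and grouping terms by $\bar\nu\in L'/L$ yields $(\iota^*g)_{\bar\mu}=\sum_{\alpha\in L/M}g_{\alpha+\mu}$, which matches the formula for $g^L$; independence of the choice of preimage $\mu$ is clear since different choices merely permute the summation over $L/M$. Because $\iota$ is $\Gamma'$-equivariant and the pairing is non-degenerate and $\Gamma'$-invariant (the representations being unitary), the adjoint $\iota^*$ is also $\Gamma'$-equivariant, so $g^L\in A_{k,\rho_L}$ whenever $g\in A_{k,\rho_M}$.

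Finally, the adjointness identity $\langle f,\bar{g^L}\rangle=\langle f_M,\bar g\rangle$ is then a direct re-indexing using that $(f_M)_\nu$ vanishes outside $L'/M$:
\[
\langle f,\bar{g^L}\rangle = \sum_{\bar\mu\in L'/L}f_{\bar\mu}\sum_{\alpha\in L/M}\bar g_{\alpha+\mu} = \sum_{\nu\in L'/M}f_{\bar\nu}\bar g_\nu = \sum_{\nu\in M'/M}(f_M)_\nu\bar g_\nu = \langle f_M,\bar g\rangle,
\]
which is just the adjointness between $\iota$ and $\iota^*$. The main obstacle is essentially bookkeeping: keeping the chain $\hat M\subset\hat L\subset\hat{L'}\subset\hat{M'}$ and the resulting relations between the two systems of characteristic functions straight. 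A direct verification of $\rho_L$- and $\rho_M$-equivariance via the explicit action of the generators $T$ and $S$ would require a Poisson-summation type argument on $L/M$; the conceptual approach via $\bar\omega_f$ bypasses this entirely.
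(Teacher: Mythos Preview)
Your proof is correct. The paper does not actually prove the lemma: it cites \cite[Proposition~6.9]{Sche} for $\operatorname{res}_{L/M}$ and remarks that $\operatorname{tr}_{L/M}$ can be handled analogously. Scheithauer's argument verifies the transformation law under the generators $T,S\in\Gamma'$ by direct computation; for $S$ this amounts to the finite Poisson summation over $L/M$ you allude to at the end. Your route through the ambient Schwartz space is cleaner: since $\rho_L$ and $\rho_M$ are by definition both restrictions of the single action $\gamma'\mapsto\bar\omega_f(\gamma'')$ on $S(V(\A_f))$, the inclusion $\iota$ is tautologically equivariant, and the trace map, being orthogonal projection onto the $\Gamma'$-stable subspace $\iota(S_L)$ inside the unitary representation $S_M$, inherits equivariance for free. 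One small point of phrasing: the bilinear pairing $\langle\phi_\mu,\phi_\nu\rangle=\delta_{\mu\nu}$ is not itself invariant under $\rho_M\times\rho_M$; what you are really using is unitarity of $\rho_M$ for the hermitian pairing. Since $\iota$ has real matrix in the standard bases the hermitian and bilinear adjoints coincide, so your computation of $\iota^*$ and the equivariance conclusion are unaffected.
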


\begin{proof}
See \cite[Proposition 6.9]{Sche} for the map $\operatorname{res}_{L/M}$. The assertion for $\tr_{L/M}$ can be proved analogously.
\end{proof}

\begin{remark}
The following fact about the trace map and theta functions, which is
easy to check, will be used in Section \ref{sect4}:
\begin{equation} \label{eqY3.3}
\theta_{L} = (\theta_M)^L.
\end{equation}
\end{remark}

\subsection{Harmonic weak Maass forms}
\label{sect:3.1}

Now assume that
$k\leq 1$.  A twice continuously differentiable function $f:\H\to
S_L$ is called a {\em harmonic weak Maass form} (of weight $k$ with
respect to $\Gamma'$ and $\rho_L$) if it satisfies:
\begin{enumerate}
\item[(i)]
$f \mid_{k,\rho_L} \gamma'= f$
for all $\gamma'\in \Gamma'$;
\item[(ii)]
there is a $S_L$-valued Fourier polynomial
\[
P_f(\tau)=\sum_{\mu\in L'/L}\sum_{n\leq 0} c^+(n,\mu) q^n\phi_\mu
\]
such that $f(\tau)-P_f(\tau)=O(e^{-\eps v})$ as $v\to \infty$ for
some $\eps>0$;
%
\item[(iii)]
$\Delta_k f=0$, where
\begin{align*}
\Delta_k := -v^2\left( \frac{\partial^2}{\partial u^2}+
\frac{\partial^2}{\partial v^2}\right) + ikv\left(
\frac{\partial}{\partial u}+i \frac{\partial}{\partial v}\right)
\end{align*}
is the usual weight $k$ hyperbolic Laplace operator (see \cite{BF}).
\end{enumerate}
The Fourier polynomial $P_f$  is called the {\em principal part} of
$f$. We denote the vector space of these harmonic weak Maass forms
by  $H_{k,\rho_L}$ (it was called $H^+_{k,\rho_L}$ in \cite{BF}).
Any weakly holomorphic modular form is a harmonic weak Maass form.
The Fourier expansion of any $f\in H_{k,\rho_L}$ gives a unique
decomposition $f=f^++f^-$, where
\begin{subequations}
\label{deff}
\begin{align}
\label{deff+}
f^+(\tau)&= \sum_{\mu\in L'/L}\sum_{\substack{n\in \Q\\ n\gg-\infty}} c^+(n,\mu) q^n\phi_\mu,\\
\label{deff-} f^-(\tau)&= \sum_{\mu\in L'/L} \sum_{\substack{n\in \Q\\
n< 0}} c^-(n,\mu) W(2\pi nv) q^n \phi_\mu,
\end{align}
\end{subequations}
and $W(a)=W_k(a):= \int_{-2a}^\infty e^{-t}t^{-k}\,
dt=\Gamma(1-k,2|a|)$ for $a<0$. We refer to $f^+$ as the {\em
holomorphic part} and to $f^-$ as the {\em non-holomorphic part} of
$f$.

Recall that there is an antilinear  differential operator $\xi=
\xi_k:H_{k,\rho_L}\to S_{2-k,\bar\rho_L}$, defined by
\begin{equation}
\label{defxi} f(\tau)\mapsto \xi(f)(\tau):=v^{k-2} \overline{L_k
f(\tau)}.
\end{equation}
Here $L_k$ is the Maass lowering operator. The kernel of $\xi$ is
equal to $M^!_{k,\rho_L}$. By  \cite[Corollary~3.8]{BF}, the
sequence
\begin{gather}
\label{ex-sequ}
\xymatrix{ 0\ar[r]& M^!_{k,\rho_L} \ar[r]& H_{k,\rho_L}
\ar[r]^{\xi}& S_{2-k,\bar\rho_L} \ar[r] & 0 }
\end{gather}
is exact.
%

There is a bilinear pairing between the spaces $M_{2-k,\bar\rho_L}$
and $H_{k,\rho_L}$ defined by the Petersson scalar product
\begin{equation}\label{defpair}
\{g,f\}=\big( g,\, \xi(f)\big)_{Pet}
\end{equation}
for $g\in M_{2-k,\bar\rho_L}$ and $f\in H_{k,\rho_L}$. If $g$ has
the Fourier expansion $g=\sum_{\mu,n} b(n,\mu) q^n\phi_\mu$, and we
denote the Fourier expansion of $f$  as in \eqref{deff}, then by
 \cite[Proposition 3.5]{BF} we have
\begin{equation}\label{pairalt}
\{g,f\}= \sum_{\mu\in L'/L} \sum_{n\leq 0}  c^+(n,\mu) b(-n,\mu).
\end{equation}
Hence $\{g,f\}$ only depends on the principal part of $f$. The
exactness of \eqref{ex-sequ} implies that the induced pairing
between $S_{2-k,\bar\rho_L}$ and $H_{k,\rho_L}/M^!_{k,\rho_L}$ is
non-degenerate.

\begin{lemma}
\label{lem:principal}
Let $f\in H_{k,\rho_L}$ and assume that $P_f$ is constant. Then $f\in M_{k,\rho_L}$.
\end{lemma}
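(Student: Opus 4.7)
The plan is to exploit the bilinear pairing \eqref{defpair} between $M_{2-k,\bar\rho_L}$ and $H_{k,\rho_L}$, restricted to cusp forms, together with the exact sequence \eqref{ex-sequ}. First, I would use hypothesis (ii) together with the standing assumption that $P_f$ is constant: this means $c^+(n,\mu)=0$ for all $n<0$, and only the $n=0$ terms in $P_f$ survive. Applied to formula \eqref{pairalt}, for any cusp form $g\in S_{2-k,\bar\rho_L}$ with Fourier coefficients $b(n,\mu)$, one gets
\[
\{g,f\}=\sum_{\mu\in L'/L} c^+(0,\mu)\, b(0,\mu)=0,
\]
since the constant coefficients $b(0,\mu)$ of a cusp form vanish.

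Next, I would translate this vanishing into a statement about $\xi(f)$. By the definition \eqref{defpair} we have $\{g,f\}=(g,\xi(f))_{Pet}$, and $\xi(f)\in S_{2-k,\bar\rho_L}$ by \eqref{ex-sequ}. Thus $(g,\xi(f))_{Pet}=0$ for every $g\in S_{2-k,\bar\rho_L}$; choosing $g=\xi(f)$ and invoking the non-degeneracy of the Petersson inner product on the space of cusp forms forces $\xi(f)=0$. By exactness of \eqref{ex-sequ}, this means $f\in M^!_{k,\rho_L}$, i.e.\ $f$ is weakly holomorphic.

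Finally, I would upgrade from weakly holomorphic to holomorphic at the cusp. Since $\xi(f)=0$, the non-holomorphic part $f^-$ of \eqref{deff-} vanishes, so $f=f^+$ is given by a $q$-expansion $\sum c^+(n,\mu)q^n\phi_\mu$. The hypothesis that $P_f$ is constant says $c^+(n,\mu)=0$ for all $n<0$, so this expansion has no negative-exponent terms. Hence $f$ is holomorphic on $\H$ and bounded at the cusp, i.e.\ $f\in M_{k,\rho_L}$.

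The step I expect to be essentially routine but worth stating carefully is the non-degeneracy of the Petersson inner product on $S_{2-k,\bar\rho_L}$, which is standard for vector-valued cusp forms; no serious obstacle arises, since the conclusion is really a direct consequence of \eqref{pairalt} combined with the exact sequence \eqref{ex-sequ} from \cite{BF}.
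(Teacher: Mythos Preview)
Your proof is correct and follows essentially the same approach as the paper's own proof. The paper simply specializes immediately to $g=\xi(f)$ in \eqref{pairalt} to obtain $(\xi(f),\xi(f))_{Pet}=\{\xi(f),f\}=0$, whereas you state the vanishing for all cusp forms $g$ first and then specialize; the logic is identical.
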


\begin{proof}
It follows from the assumption and \eqref{pairalt} that
\[
 \big( \xi(f),\, \xi(f)\big)_{Pet}
=\{\xi(f),f\}
=0.
\]
Hence $\xi(f)=0$ and $f$ is weakly holomorphic. Since $P_f$ is
constant we find that $f\in M_{k,\rho_L}$.
\end{proof}

\begin{lemma} \label{lemY3.4}
\label{lem:n11}
Let $\mu\in L'/L$, and let $m\in \Q_{>0}$ such that $m\equiv -Q(\mu)\pmod{1}$.
There exists a harmonic weak Maass form
$f_{m,\mu}\in H_{k,\rho_L}$ whose Fourier expansion
starts as
\[
f_{m,\mu}(\tau)=\frac{1}{2}(
q^{-m}\phi_{\mu}+q^{-m}\phi_{-\mu}) + O(1) ,\quad v\to
\infty.
\]
\end{lemma}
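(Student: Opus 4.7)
The plan is to realize $f_{m,\mu}$ by invoking the general existence theorem for harmonic weak Maass forms with prescribed principal part (\cite[Proposition 3.11]{BF}, which may also be extracted from the surjectivity in \eqref{ex-sequ} combined with the non-degeneracy of the pairing \eqref{defpair}): every $\Gamma_\infty'$-invariant Fourier polynomial of the form $\sum_{n<0} a(n,\nu)q^n\phi_\nu$ arises as the principal part of some $f\in H_{k,\rho_L}$ up to an additive constant $\varphi\in S_L$. The natural seed in our situation is
\[
Q(\tau) \;=\; \tfrac12\bigl(q^{-m}\phi_{\mu}+q^{-m}\phi_{-\mu}\bigr).
\]

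The key step is to verify that $Q$ is $\Gamma_\infty'$-invariant under $|_{k,\rho_L}$. The group $\Gamma_\infty'$ is generated by $T=\bigl(\kzxz{1}{1}{0}{1},1\bigr)$ together with the central element $S^2=(-I,i)$, so two checks are required. For $T$, formula \eqref{eq:weilt} shows that the slash action multiplies $q^{-m}\phi_\nu$ by $e(-m-Q(\nu))$, which equals $1$ for $\nu=\pm\mu$ by the hypothesis $m\equiv -Q(\mu)\pmod 1$ together with the identity $Q(-\mu)=Q(\mu)$. For $S^2$, a direct iteration of \eqref{eq:weils} yields $\rho_L(S^2)\phi_\nu=c\cdot\phi_{-\nu}$ where $c$ is an eighth root of unity depending only on the signature of $V$. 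Combining with the automorphy factor $i^{-2k}$ that appears in the slash action of $S^2$, and using the parity hypothesis $k\equiv(b^+-b^-)/2\pmod 2$, one finds that these sign factors exactly cancel, so that the action of $S^2$ on any vector $\sum_\nu c_\nu \phi_\nu$ reduces to the interchange $\phi_\nu\leftrightarrow\phi_{-\nu}$. Under this swap $Q$ is manifestly invariant, which is precisely why one has to symmetrize over $\pm\mu$.

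Having secured $\Gamma_\infty'$-invariance of $Q$, the cited existence result produces an element $f_{m,\mu}\in H_{k,\rho_L}$ with $P_{f_{m,\mu}}=Q+\varphi$ for some constant $\varphi\in S_L$. Since the non-holomorphic part $f^-_{m,\mu}$ decays exponentially as $v\to\infty$ (cf.~\eqref{deff-}), the constant $\varphi$ together with any further order-zero terms of the holomorphic part are absorbed into the $O(1)$-remainder, giving the asserted asymptotic. An equivalent, more concrete construction is the Maass--Poincar\'e series with seed $\tfrac12(\phi_\mu+\phi_{-\mu})\cdot e(-m\tau)$ averaged over $\Gamma_\infty'\backslash\Gamma'$, as developed in \cite{Br}; the symmetrization is precisely what makes the seed $\Gamma_\infty'$-invariant before averaging, so that the Poincar\'e sum converges after suitable regularization and has the prescribed singularity at $i\infty$. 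The only genuine technical point is the metaplectic/sign bookkeeping at $S^2$; once that is in place, the lemma is essentially a direct application of the theory recalled in Section~\ref{sect:3.1}.
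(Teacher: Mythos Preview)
Your proposal is correct and follows exactly the paper's approach: the paper's proof is the single sentence ``This is an immediate consequence of \cite[Proposition~3.11]{BF},'' and you have simply spelled out why that proposition applies, namely by verifying that the seed polynomial $Q(\tau)=\tfrac12(q^{-m}\phi_\mu+q^{-m}\phi_{-\mu})$ is $\Gamma_\infty'$-invariant. Your checks for $T$ and $S^2$ are accurate (in fact $\rho_L(S^2)\phi_\nu=i^{b^--b^+}\phi_{-\nu}$ is a fourth root of unity, and the parity assumption $k\equiv(b^+-b^-)/2\pmod 2$ is exactly what makes $i^{-2k-b^-+b^+}=1$), and the alternative via Maass--Poincar\'e series you mention is precisely the construction underlying the cited result.
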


\begin{proof}
This is an immediate consequence of \cite[Proposition 3.11]{BF}.
\end{proof}

\section{Regularized theta integrals}
\label{sect4}

Let $(V,Q)$ be a quadratic space over $\Q$ of signature $(n,2)$. We
use the setup of Section~\ref{sect:2}. In particular, $L\subset V$
is an even lattice. Let $K\subset H(\A_f)$ be a compact open
subgroup acting trivially on $S_L$. We consider the Shimura variety
\begin{align}
X_K=H(\Q) \bs \big( \D\times H(\A_f)  / K\big).
\end{align}
It is a quasi-projective variety of dimension $n$ defined over $\Q$.

On $X_K$ we consider the following Heegner divisors (cf.~\cite{Bo1},
\cite{Br}, \cite{Ku4}). We follow the description in \cite[pp.~304]{Ku4}. Let $x\in V(\Q)$ be a vector of positive norm. We write
$V_x$ for the orthogonal complement of $x$ in $V$ and $H_x$ for the
stabilizer of $x$ in $H$. So $H_x\cong \GSpin(V_x)$. The
sub-Grassmannian
\begin{align}
\label{eq:dx} \D_{x}=\{ z\in \D;\;\text{$z\perp x$}\}
\end{align}
defines an analytic divisor of $\D$. For $h\in H(\A_f)$ we consider
the natural map
\begin{align} \label{eqY4.3}
H_x(\Q)\bs \D_x\times H_x(\A_f)/(H_x(\A_f)\cap hKh^{-1})
\longrightarrow X_K,\quad (z,h_1)\mapsto (z,h_1 h).
\end{align}
Its image defines a divisor $Z(x,h)$ on $X_K$, which is rational
over $\Q$. For $m\in \Q_{>0}$ let
\begin{align}
\Omega_m=\left\{x\in V;\; Q(x)=m\right\}
\end{align}
 be the corresponding quadric in $V$. If $\Omega_m(\Q)$ is
non-empty, then by Witt's theorem, we have $\Omega_m(\Q)=H(\Q)x_0$
and $\Omega_m(\A_f)=H(\A_f)x_0$ for a fixed element $x_0\in
\Omega_m(\Q)$. For a Schwartz function $\varphi\in S_L$, we may
write
\begin{align}
\supp(\varphi)\cap \Omega_m(\A_f) = \coprod_j K\xi_j^{-1} x_0
\end{align}
as a finite disjoint union, where  $\xi_j\in H(\A_f)$. This follows
from the fact that $\supp(\varphi)$ is compact and $\Omega_m(\A_f)$
is a closed subset of $V(\A_f)$. We define a composite Heegner
divisor by putting
\begin{align}
\label{eq:zvarphi} Z(m,\varphi)=\sum_j \varphi(\xi_j^{-1} x_0)
Z(x_0,\xi_j).
\end{align}
This definition is independent of the choice of $x_0$ and the
representatives
 $\xi_j$. For
$\mu\in L'/L$ we briefly write $ Z(m,\mu):=Z(m,\phi_\mu)$. The
following lemma is a special case of \cite[Proposition 5.4]{KuDuke}.

\begin{lemma} \label{lemY4.1}
\label{heegnercomp} Assume that $H(\A_f)=H(\Q) K$ and put
$\Gamma_K=H(\Q)\cap K$. Then
\[
Z(m,\varphi) = \sum_{x\in \Gamma_K\bs \Omega_m(\Q)}\varphi(x)
\pr(\D_x,1),
\]
where $\pr:\D\times H(\A_f)\to X_K$ denotes the natural projection.
\hfill$\square$
\end{lemma}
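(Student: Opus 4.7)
Under the hypothesis $H(\A_f) = H(\Q) K$, the first step is to identify $X_K$ with $\Gamma_K \bs \D$ via $z \mapsto [(z,1)]$. This map is a bijection: any pair $(z, h) \in \D \times H(\A_f)$ can be rewritten as $(\gamma^{-1} z, 1)$ by decomposing $h = \gamma k$ with $\gamma \in H(\Q)$, $k \in K$, and the remaining ambiguity is exactly the $\Gamma_K$-action on $\D$. Under this identification, $\pr(\D_x, 1)$ is simply the image of $\D_x$ in $\Gamma_K \bs \D$.

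The second step is to rewrite each summand $Z(x_0, \xi_j)$ under this identification. For $(z, h_1) \in \D_{x_0} \times H_{x_0}(\A_f)$, write $h_1 \xi_j = \gamma k$ with $\gamma \in H(\Q)$, $k \in K$. Then the image of $(z, h_1)$ in $X_K$ is the $\Gamma_K$-class of $\gamma^{-1} z$, which lies in $\D_y$ for $y := \gamma^{-1} x_0$; since $h_1$ fixes $x_0$, this $y$ equals $k \xi_j^{-1} x_0$, and so $y \in \Omega_m(\Q) \cap K \xi_j^{-1} x_0$ (viewing $y$ diagonally in $V(\A_f)$). Conversely, by Witt's theorem every such rational $y$ arises from some $(z, h_1)$ in the source. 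Tracking the quotients by $H_{x_0}(\Q)$ and $H_{x_0}(\A_f) \cap \xi_j K \xi_j^{-1}$ on the source against the $\Gamma_K$-quotient on the target, one obtains
\[
Z(x_0, \xi_j) \;=\; \sum_{[y] \in \Gamma_K \bs (\Omega_m(\Q) \cap K \xi_j^{-1} x_0)} \pr(\D_y, 1).
\]

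To finish, I would use the $K$-invariance of $\varphi$ to write $\varphi(\xi_j^{-1} x_0) = \varphi(y)$ for every $y \in K\xi_j^{-1} x_0$, and use the disjoint decomposition $\supp(\varphi) \cap \Omega_m(\A_f) = \coprod_j K\xi_j^{-1} x_0$ to partition $\{x \in \Omega_m(\Q) : \varphi(x) \neq 0\}$ according to $j$. Summing the displayed identity weighted by $\varphi(\xi_j^{-1} x_0)$ then produces the desired equality, since $\varphi(x) = 0$ for $x$ outside the support. The main obstacle will be the multiplicity-one assertion in the second step: one must match the $\Gamma_K$-stabilizer of each rational $y$ with the local stabilizer $H_y(\Q) \cap \gamma (H_{x_0}(\A_f) \cap \xi_j K \xi_j^{-1}) \gamma^{-1}$ coming from the adelic double-coset description, and verify they coincide using $H(\Q) \cap K = \Gamma_K$. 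The hypothesis $H(\A_f) = H(\Q) K$ is precisely what makes this matching transparent.
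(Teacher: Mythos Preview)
Your approach is correct and is exactly the standard argument for this identification. The paper itself does not give a proof; it simply records the lemma as a special case of \cite[Proposition~5.4]{KuDuke} and closes with a $\square$. (The authors had drafted a sketch along the same lines---identify $X_K\cong\Gamma_K\bs\D$, build for each rational $x\in\Omega_m(\Q)\cap\supp(\varphi)$ a map $\Gamma_{K,x}\bs\D_x\to Z(x_0,1)$ via a commutative diagram, and check that two such images coincide iff the vectors are $\Gamma_K$-equivalent---but ultimately suppressed it.) Your outline matches this reasoning, and the ``obstacle'' you flag about matching stabilizers is precisely the point handled by the equality $H_y(\Q)\cap K=\Gamma_{K,y}$, which follows immediately from $\Gamma_K=H(\Q)\cap K$.
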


Let $f\in H_{1-n/2,\bar\rho_L}$ be a harmonic weak Maass form of
weight $1-n/2$ with representation $\bar\rho_L$ for $\Gamma'$, and
denote its Fourier expansion as in \eqref{deff}. Throughout we
assume that $c^+(m,\mu)\in \mathbb Z$ for all $m\leq 0$. We consider
the regularized theta integral
\begin{align}
\label{reg1} \Phi(z,h,f)=\int_{\calF}^{reg} \langle f(\tau),
\theta_L(\tau,z,h)\rangle\,d\mu(\tau)
\end{align}
for $z\in \D$ and $h\in H(\A_f)$. The integral is regularized as in
\cite{Bo1}, \cite{BF}, that is, $\Phi(z,h,f)$ is defined as the
constant term in the Laurent expansion at $s=0$ of the function
\begin{align}
\label{reg2} \lim_{T\to \infty}\int_{\calF_T} \langle f(\tau),
\theta_L(\tau,z,h)\rangle\,v^{-s} d\mu(\tau).
\end{align}
Here $\calF_T=\{\tau\in \H; \; \text{$|u|\leq 1/2$, $|\tau|\geq 1$,
and $v\leq T$}\}$ denotes the truncated fundamental domain. The
following theorem summarizes some properties of the function
$\Phi(z,h,f)$ in the setup of the present paper (see \cite{Br},
\cite{BF}).

\begin{theorem} \label{theoY4.2}
The function $\Phi(z,h,f)$ is smooth on $X_K\bs Z(f)$, where
\begin{align}
Z(f)=\sum_{\mu\in L'/L}\sum_{m>0} c^+ (-m,\mu)
Z(m,\mu).
\end{align}
It has a logarithmic singularity along the divisor $-2Z(f)$. The
$(1,1)$-form $dd^c \Phi(z,h,f)$ can be continued to a smooth form on
all of $X_K$. We have the Green current equation
\begin{align} \label{eqY4.10}
dd^c[\Phi(z,h,f)]+\delta_{Z(f)}=[dd^c \Phi(z,h,f)],
\end{align}
where $\delta_Z$ denotes the Dirac current of a divisor $Z$.
Moreover, if $\Delta_z$ denotes the invariant Laplace operator on
$\D$, normalized as in \cite{Br}, we have
\begin{align}
\Delta_z  \Phi(z,h,f) = \frac{n}{4}\cdot c^+(0,0).
\end{align}
\end{theorem}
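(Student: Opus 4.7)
The plan is to reduce Theorem~\ref{theoY4.2} to the corresponding scalar-valued statements proved in \cite{Br} and \cite{BF}, by decomposing the adelic Shimura variety $X_K$ into its connected components and rewriting the adelic theta integral on each component as a classical regularized theta lift. Concretely, choose representatives $h_1,\dots,h_r\in H(\A_f)$ for the double cosets $H(\Q)\bs H(\A_f)/K$, and set $\Gamma_j=H(\Q)\cap h_jKh_j^{-1}$; then $X_K$ is the finite disjoint union of the arithmetic quotients $\Gamma_j\bs \D$. By \eqref{theta2}, the restriction of $\theta_L(\tau,z,h_j)$ to such a component is an $S_L$-valued finite linear combination of classical Siegel theta functions $\Theta_{L_j,\mu}(\tau,z)$ attached to the sublattice $L_j=h_j\hat L h_j^{-1}\cap V$ and its cosets $\mu\in L_j'/L_j$. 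Pairing componentwise with $f$ therefore exhibits $\Phi(z,h,f)$ on each $\Gamma_j\bs \D$ as a regularized theta lift of the type studied in \cite{Br,BF}.

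On each component, Lemma~\ref{lemY4.1} identifies $Z(f)\cap(\Gamma_j\bs \D)$ with the classical Heegner divisor built from the coefficients $c^+(-m,\mu)$. The first three assertions then follow directly by applying the Borcherds-type singularity analysis of \cite[Thm.~6.2]{Br} (for weakly holomorphic input) together with its extension to harmonic weak Maass forms in \cite[\S5]{BF}: the holomorphic part $f^+$ contributes the logarithmic singularities with the correct multiplicities, while the non-holomorphic part $f^-$, together with the non-singular terms of $f^+$, contributes a smooth function. Thus $\Phi(z,h,f)$ is smooth off $Z(f)$, has a logarithmic singularity of strength $-2$ along $Z(f)$, and its $dd^c$ extends smoothly across $Z(f)$; the Green current equation \eqref{eqY4.10} is then the Poincar\'e--Lelong identity established on each component in \cite[\S4]{BF}.

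For the Laplace equation, the crucial input is the intertwining relation
\begin{equation*}
\Delta_z\,\theta_L(\tau,z,h) \;=\; 2\,\Delta_{1-n/2,\tau}\,\theta_L(\tau,z,h)
\end{equation*}
between the invariant Laplacian on $\D$ and the weight $1-n/2$ hyperbolic Laplacian on $\H$, which follows from the compatibility of the Gaussian $\varphi_\infty(\cdot,z)$ with the two dual pairs $(H,G')$ and is worked out explicitly in \cite[Thm.~4.7]{Br}. Since $f$ is harmonic of weight $1-n/2$, integration by parts transfers $\Delta_{1-n/2,\tau}$ onto $f$ and annihilates the interior integrand. What remains is a boundary contribution arising from the truncation in the regularization \eqref{reg2}; a direct computation of this constant term at the cusp, exactly as in \cite[Prop.~4.7]{Br} and \cite[\S5]{BF}, picks out the constant Fourier coefficient $c^+(0,0)$ and produces the factor $n/4$.

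The main obstacle is the careful handling of the regularization: one must show that the limits $s\to 0$ in \eqref{reg1} and $T\to\infty$ in \eqref{reg2} commute with $dd^c_z$ and $\Delta_z$, and that all boundary terms either vanish (by exponential decay of $f^-$ and the rapid decay of non-constant Fourier modes of $\theta_L$ away from the singular locus) or collapse to the stated constant multiple of $c^+(0,0)$. This bookkeeping is precisely what is carried out in \cite{Br,BF} in the classical, scalar language, and the decomposition above transports those arguments without change to the adelic setting used here.
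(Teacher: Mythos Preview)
Your proposal is correct and follows the same approach as the paper: the paper does not give a self-contained proof but simply states the theorem as a summary of properties established in \cite{Br} and \cite{BF}, and your sketch is precisely the natural unpacking of how those classical results transfer to the adelic setup via the decomposition of $X_K$ into connected components. One minor point: the lattice on the $h_j$-component should be written as $h_j\hat L\cap V(\Q)$ (the $H(\A_f)$-action on Schwartz functions in \eqref{theta2} translates the support by $h_f$), not as a conjugation $h_j\hat L h_j^{-1}$; this does not affect the argument.
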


In particular, the theorem implies that $\Phi(z,h,f)$ a
Green function for the divisor $Z(f)$ in the sense of Arakelov
geometry in the normalization of \cite{SABK}. (If the constant term
$c^+(0,0)$ of $f$ does not vanish, one actually has to work with the
generalization of Arakelov geometry given in \cite{BKK}.) Moreover,
we see that $\Phi(z,h,f)$ is harmonic when $c^+(0,0)=0$.
Therefore, it is called the {\em automorphic Green function}
associated with $Z(f)$.

In the special case when $f$ is weakly holomorphic, $\Phi(z,h,f)$ is
essentially equal to the logarithm of the Petersson metric of a
Borcherds product $\Psi(z,h,f)$ on $X_K$. Note that $\Phi(z,h,f)$
has a finite value for {\em every} $z\in \D$, even on $Z(f)$, where
it is not smooth, see \cite{Scho}. Similar Green functions are
investigated from the point of view of spherical functions on real
Lie groups in \cite{OT}.
The following theorem gives a characterization of $\Phi(z,h,f)$.
Although it is not needed in the rest of the paper, we include it here
to provide some background.

\begin{theorem}
Assume that the Witt rank of $V$ over $\Q$ is smaller than $n$.
Let $G$ be a smooth real valued function on $X_K\bs Z(f)$ with the properties:
\begin{enumerate}
\item[(i)] $G$ has a logarithmic singularity along $-2Z(f)$,
\item[(ii)] $\Delta_z G = \text{constant}$,
\item[(iii)] $G\in L^{1+\eps}(X_K,d\mu(z))$ for some $\eps>0$.
\end{enumerate}
Then $G(z,h)$ differs from $\Phi(z,h,f)$ by a constant.
\end{theorem}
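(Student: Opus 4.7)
\medskip

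\noindent\textbf{Proof plan.} The strategy is to study the difference
\[
F(z,h) := G(z,h) - \Phi(z,h,f)
\]
and show that this function must be constant on $X_K$. First, I would observe that $F$ extends to a smooth real-valued function on all of $X_K$. Indeed, by Theorem~\ref{theoY4.2} the Green function $\Phi(\cdot,f)$ has a logarithmic singularity along $-2Z(f)$, and by hypothesis (i) so does $G$; hence the singularities cancel and $F$ extends smoothly across $Z(f)$. Similarly, $\Delta_z F = \Delta_z G - \Delta_z \Phi$ is constant, equal to $c_G - \tfrac{n}{4}c^+(0,0)$, where $c_G$ is the constant from hypothesis (ii) and $\Delta_z \Phi = \tfrac{n}{4}c^+(0,0)$ comes from Theorem~\ref{theoY4.2}.

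Next, I would upgrade the $L^{1+\eps}$ hypothesis to $F \in L^{1+\eps}(X_K, d\mu(z))$. Since $\Phi(\cdot,f)$ is smooth away from $Z(f)$ with only logarithmic singularities along $Z(f)$ (which are locally in $L^p$ for every $p$), the only issue is its behavior near the boundary. The Witt rank assumption $r_0 < n$, equivalent to $\dim V - r_0 > 2$, places us in the range where the Siegel--Weil formula applies without regularization, and guarantees that $\Phi(\cdot,f)$ has only mild (at worst logarithmic) growth toward the boundary of $X_K$. Combined with hypothesis (iii) on $G$, this yields $F \in L^{1+\eps}(X_K,d\mu)$.

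With $F$ smooth, $\Delta_z F$ constant, and $F \in L^{1+\eps}$ on the finite-volume space $X_K$, I would reduce the theorem to the following analytic assertion: \emph{any smooth function $F$ on $X_K$ with constant Laplacian and in $L^{1+\eps}$ is constant.} To prove this, I would proceed in two steps. First, integrating $\Delta_z F$ over $X_K$ via an exhaustion by compact subsets and applying Stokes' theorem, the $L^{1+\eps}$ condition lets one control the boundary terms arising at the cusps (this is where the Witt rank condition enters again, via the structure of the rational boundary components), giving $c \cdot \vol(X_K) = 0$ and hence $\Delta_z F = 0$. Second, $F$ is now a harmonic function on $X_K$ in $L^{1+\eps}$: the spectral theory of the Laplacian on the finite-volume arithmetic quotient $X_K$ shows that the constant functions are the only harmonic functions in any $L^p$ with $p \geq 1$, essentially because the $0$-eigenspace of $\Delta_z$ on $L^2(X_K)$ consists of the constants.

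The main obstacle is the second step, i.e., the Liouville-type property for $L^{1+\eps}$ harmonic functions on the noncompact Shimura variety. In the compact case (Witt rank zero, $V$ anisotropic) the statement is immediate from the Hopf maximum principle or a one-line integration by parts. In the noncompact case, one must control the behavior of $F$ at the cusps; the Witt rank bound $r_0 < n$ ensures that the cusps have a sufficiently ``thin'' structure that no nonconstant harmonic $L^{1+\eps}$ function can exist, either by a direct analysis of Fourier expansions at each boundary component or by invoking the known spectral decomposition of $L^2(X_K)$.
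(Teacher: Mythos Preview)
Your overall architecture matches the paper's: form the difference $F=G-\Phi(\cdot,f)$, observe that the logarithmic singularities cancel so $F$ is smooth, that $\Delta_z F$ is constant, and that $F\in L^{1+\eps}(X_K)$, and then argue that such an $F$ must be constant. The divergence is in how the last two steps are carried out.

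For the integrability of $\Phi(\cdot,f)$, your appeal to the Siegel--Weil range is not the right mechanism; Siegel--Weil governs theta integrals over the orthogonal group, not the growth of $\Phi$ toward the boundary of $X_K$. The paper instead reads off the boundary behavior from the explicit Fourier expansion of $\Phi(\cdot,f)$ (as computed in \cite{Br}) and uses the ``curve lemma'' of \cite{Brcurve} to deduce $\Phi(\cdot,f)\in L^{1+\eps}$. This is where the Witt rank hypothesis actually enters.

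For the Liouville step, the paper does \emph{not} first show that the constant $\Delta_z F$ vanishes and then invoke spectral theory. It applies directly a theorem of Yau (quoted as \cite[Corollary~4.22]{Br}): on a complete Riemannian manifold, a smooth subharmonic function in $L^p$ for some $p>1$ is constant. Since $\Delta_z F$ is constant, either $F$ or $-F$ is subharmonic, and Yau's result finishes in one stroke. Your two-stage route (Stokes to kill the constant, then $L^2$-spectral theory to kill harmonic $L^{1+\eps}$ functions) is in principle workable but has real gaps as written: the Stokes argument requires controlling $\nabla F$, not just $F$, near the boundary, and you have no a priori gradient information; and the passage from ``$0$-eigenspace of $\Delta_z$ on $L^2$ consists of constants'' to a statement about $L^{1+\eps}$ harmonic functions is not automatic. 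Yau's theorem is precisely the clean tool that sidesteps both difficulties.
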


Here $d\mu(z)$ is the measure on $X_K$ induced from the Haar measure on the group $H(\A)$.
If the Witt rank of $V$ is equal to $n$, one can obtain a similar characterization by also requiring growth conditions at the boundary of $X_K$.
The constant could be fixed, for instance, by adding a condition on the value of the integral $\int_{X_K} G\, d\mu(z)$.

\begin{proof}[Idea of the proof]
First, we notice that $\Phi(z,h,f)$ satisfies the properties (i)--(iii). The first two are contained in Theorem \ref{theoY4.2}.
The third can be proved using the Fourier expansion of $\Phi(z,h,f)$
(see \cite{Br}) and the `curve lemma' as in \cite[Theorem 2]{Brcurve}.

Hence the difference $G(z,h)-\Phi(z,h,f)$ is a smooth subharmonic function on the complete Riemann manifold $X_K$ which is contained in $L^{1+\eps}(X_K,d\mu(z))$.
By a result of Yau, such a function must be constant (see e.g. \cite[Corollary 4.22]{Br}).
\end{proof}

\subsection{CM values of automorphic Green functions}
\label{sect:cmgreen}

We define CM cycles on $X_K$ as follows. Let $U\subset V$ be a negative definite
$2$-dimensional rational subspace of $V$. It determines a two point
subset $\{z_U^\pm\}\subset \D$ given by $U(\R)$ with the two possible choices of orientation.
Let $V_+\subset V$ be the orthogonal
complement of $U$ over $\Q$. Then $V_+$ is a positive definite
subspace of dimension $n$, and we have the rational splitting
\begin{align}
\label{split} V=V_+\oplus U.
\end{align}
Let $T=\GSpin(U)$, which we view as a subgroup of $H$ acting trivially
on $V_+$,  and put $K_T=K\cap T(\A_f)$. We obtain the CM cycle
\begin{align}
\label{eq:zu} Z(U)=T(\Q)\bs \big( \{ z_U^\pm\} \times T(\A_f)/K_T\big)
\longrightarrow X_K.
\end{align}
Here each point in the cycle is counted with multiplicity
$\frac{2}{w_{K, T}}$, where  $w_{K,T}=\# (T(\mathbb Q) \cap K_T)$.

It is our goal to compute the value of $\Phi(z,f)$ on $Z(U)$.
In the special case when $f$ is weakly holomorphic this
was done by Schofer \cite{Scho}, whose argument we will extend here.
The related problem of computing the
integrals of logarithms of Petersson norms of Borcherds products is
considered in \cite{Ku4}, \cite{BK}.

We fix the Tamagawa Haar measure on
$\SO(U)(\A)$ so that $\vol(\SO(U)(\mathbb R)) =1$, and
$\vol(\SO(U)(\Q)\backslash \SO(U)(\A_f))=2$. We also fix the usual
Haar measure on $\A_f^*$ so that $\vol(\mathbb Z_p^*)=1$. So
$\vol(\hat{\mathbb Z}^*) =1$, and $\vol(\Q^*\backslash
\A_f^*)=1/2$. We then use the exact sequence
$$
1  \rightarrow \A_f^* \rightarrow T(\A_f) \rightarrow \SO(U)(\A_f)
\rightarrow 1
$$
to define the Haar measure on $T(\A_f)$.

\begin{lemma} \label{lemY1.1} With the notation as above, one has
\begin{align}
\label{tocompute}
\Phi(Z(U),f)&= \frac{2}{w_{K, T}}\sum_{z\in \supp( Z(U))} \Phi(z,f)\\
\nonumber &=\frac{2}{\vol(K_T)}\int_{h\in \SO(U)(\Q)\bs
\SO(U)(\A_f)} \Phi(z_U^+,h,f)\, dh.
\end{align}
\end{lemma}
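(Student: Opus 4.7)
The first equality is simply the definition of evaluating a function on the $0$-cycle $Z(U)$: by hypothesis each image point in $T(\Q)\bs(\{z_U^\pm\}\times T(\A_f)/K_T)$ is weighted by $\tfrac{2}{w_{K,T}}$, so one immediately reads off
\[
\Phi(Z(U),f)=\frac{2}{w_{K,T}}\sum_{z\in\supp Z(U)}\Phi(z,f).
\]
Everything that follows is devoted to the second equality, which is a matter of unfolding the sum into an adelic integral and then descending through the exact sequence $1\to\G_m\to T\to\SO(U)\to 1$.

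\textbf{Unpacking the support.} Since $\SO(U)(\R)\cong \SO(2)$ is connected, it preserves orientation, hence $T(\Q)$ (which factors through $\SO(U)(\Q)\subset\SO(U)(\R)$) acts trivially on $\{z_U^\pm\}$. Therefore
\[
T(\Q)\bs\bigl(\{z_U^\pm\}\times T(\A_f)/K_T\bigr)=\{z_U^\pm\}\times\bigl(T(\Q)\bs T(\A_f)/K_T\bigr).
\]
Moreover, inspecting the definition \eqref{theta2} one sees that $\theta_L(\tau,z,h)$ depends on $z$ only through the orthogonal splitting $V(\R)=z\oplus z^{\perp}$ and not through the orientation of $z$; consequently $\Phi(z_U^+,h,f)=\Phi(z_U^-,h,f)$, and the sum over $\{z_U^\pm\}$ contributes a factor $2$:
\[
\sum_{z\in\supp Z(U)}\Phi(z,f)=2\sum_{h\in T(\Q)\bs T(\A_f)/K_T}\Phi(z_U^+,h,f).
\]

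\textbf{From sum to integral, and descent.} Because $T$ is a $2$-dimensional algebraic torus, and in particular abelian, the stabilizer $T(\Q)\cap hK_Th^{-1}$ equals $T(\Q)\cap K_T$ with constant order $w_{K,T}$. The standard unfolding of counting measure against the Haar measure $dh$ on $T(\A_f)$ therefore gives
\[
\sum_{h\in T(\Q)\bs T(\A_f)/K_T}\Phi(z_U^+,h,f)=\frac{w_{K,T}}{\vol(K_T)}\int_{T(\Q)\bs T(\A_f)}\Phi(z_U^+,h,f)\,dh.
\]
Next, the right action of $H(\A_f)$ on $\theta_L(\tau,z,\cdot)$ goes through $H\to\SO(V)$; restricted to $T$ this factors through $T\to\SO(U)$, killing the central $\G_m$. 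Since Hilbert~90 makes $1\to\A_f^*\to T(\A_f)\to \SO(U)(\A_f)\to 1$ exact, and since the measures were normalized just before the lemma so that $\vol(\Q^*\bs\A_f^*)=\tfrac12$, Fubini yields
\[
\int_{T(\Q)\bs T(\A_f)}\Phi(z_U^+,h,f)\,dh=\tfrac12\int_{\SO(U)(\Q)\bs\SO(U)(\A_f)}\Phi(z_U^+,h,f)\,dh.
\]
Multiplying the four scalar factors $\tfrac{2}{w_{K,T}}\cdot 2\cdot\tfrac{w_{K,T}}{\vol(K_T)}\cdot\tfrac{1}{2}=\tfrac{2}{\vol(K_T)}$ gives the second equality.

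\textbf{Main obstacle.} There is no serious analytic difficulty; the entire content is measure-theoretic bookkeeping. The one place requiring care is matching the normalizations across the exact sequence $1\to\A_f^*\to T(\A_f)\to\SO(U)(\A_f)\to 1$, together with checking that the conventions $\vol(\SO(U)(\R))=1$, $\vol(\SO(U)(\Q)\bs\SO(U)(\A_f))=2$ and $\vol(\hat\Z^*)=1$ adopted just before the lemma are used consistently with the chosen right-invariant measure on $T(\A_f)$.
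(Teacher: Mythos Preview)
Your proof is correct and follows essentially the same route as the paper's: both convert the finite sum over $T(\Q)\bs T(\A_f)/K_T$ into an integral over $T(\Q)\bs T(\A_f)$ via the relation $\sum_h F(h)=\frac{w_{K,T}}{\vol(K_T)}\int F\,dh$, then descend through the exact sequence $1\to\G_m\to T\to\SO(U)\to 1$ using $\vol(\Q^*\bs\A_f^*)=\tfrac12$, and finally absorb the factor of $2$ coming from $\Phi(z_U^+,h,f)=\Phi(z_U^-,h,f)$. Your write-up is slightly more explicit about why the orientation is irrelevant and about the exactness of the adelic sequence, but the argument is the same.
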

\begin{proof}For any $T(\mathbb Q)K_T$-invariant everywhere defined $L^1$-function $F$
on $T(\A_f)$, one has
\begin{align*}
 \int_{T(\mathbb Q) \backslash T(\A_f)} F(h) dh
  &= \int_{T(\mathbb Q) \backslash T(\A_f)/K_T}\int_{(T(\mathbb Q) \cap K_T)\backslash K_T} F(hk
  )dk \, dh
 \\
  &= \vol((T(\mathbb Q) \cap K_T)\backslash K_T) \sum_{h \in T(\mathbb Q) \backslash T(\A_f)/K_T}
  F(h)
  \\
   &=\frac{\vol(K_T)}{w_{K, T}} \sum_{h \in T(\mathbb Q) \backslash
   T(\A_f)/K_T} F(h).
   \end{align*}
On the other hand, the exact sequence
$$
1  \rightarrow \mathbb G_m \rightarrow T \rightarrow \SO(U)
\rightarrow 1
$$
implies that if $F$ is also $\A_f^*$-invariant,  we have
$$
\int_{T(\mathbb Q)\backslash T(\A_f)} F(h) dh=\frac{1}2
\int_{\SO(U)(\Q) \backslash \SO(U)(\A_f)} F(h) dh.
$$
So
$$
\sum_{h \in T(\Q) \backslash T(\A_f) /K_T} F(h) =\frac{w_{K,
T}}{2\vol(K_T)}\int_{\SO(U)(\Q) \backslash \SO(U)(\A_f)} F(h) dh.
$$
Taking $F(h) = \Phi(z_U^\pm, h, f)$, and noticing that $\Phi(z_U^+,
h, f)=\Phi(z_U^-, h, f)$, one proves the lemma.
\end{proof}

\begin{remark}
Taking $F=1$ in the proof of the previous lemma, one sees
\begin{equation}
\deg Z(U) =\frac{4}{\vol(K_T)}.
\end{equation}
\end{remark}

Using the splitting \eqref{split}, we obtain definite lattices
\begin{align*}
N=L\cap U,\quad  P=L\cap V_+.
\end{align*}
Then  $N\oplus P\subset L$ is a sublattice of finite index. Since $
\theta_{P\oplus N}=\theta_{P} \otimes \theta_N$, and $\theta_L
=(\theta_{P \oplus N})^L$ by (\ref{eqY3.3}), Lemma \ref{sublattice}
 implies that
$$
\langle f, \theta_L \rangle =\langle f_{P\oplus N}, \theta_P \otimes
\theta_N \rangle.
$$
So we may assume in the following calculation $L=P\oplus N$ if we
replace $f$ by $f_{P\oplus N}$.

 For $z=z_U^\pm$ and $h\in
T(\A_f)$, the Siegel theta function $\theta_L(\tau,z,h)$ splits up
as a product
\begin{align}
\label{splittheta} \theta_L(\tau,z_U^\pm,h)= \theta_P(\tau)\otimes
\theta_N(\tau,z_U^\pm,h).
\end{align}
Here $\theta_P(\tau)=\theta_P(\tau,1)$ is the holomorphic
$S_P$-valued theta function of weight $n/2$ associated to the
positive definite lattice $P$.

For the computation of the CM value $\Phi( Z(U),f)$ it is
convenient to write the regularized theta integral as a limit of
truncated integrals by means of the following lemma.
If $S(q)=\sum_{n\in \Z} a_n q^n$ is a Laurent series in $q$ (or a
holomorphic Fourier series in $\tau$), we write
\begin{align}
\CT(S)=a_0
\end{align}
for the constant term in the $q$-expansion.

\begin{lemma}
\label{limit} If we define
\begin{align}
A_0 =\CT(\langle f^+(\tau),\, \theta_P(\tau)\otimes \phi_{0+N}
\rangle),
\end{align}
we have
\[
\Phi(z_U^\pm,h,f)
=\lim_{T\to \infty}\left[ \int_{\calF_T} \langle f(\tau),\,
\theta_P(\tau)\otimes \theta_N(\tau,z_U^\pm,h) \rangle\,d\mu(\tau) -A_0
\log(T)\right].
\]
\end{lemma}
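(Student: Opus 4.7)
The plan is to control the cuspidal asymptotic of the integrand and then match both regularizations (Mellin and logarithmic truncation) via a standard bookkeeping. By the reduction immediately preceding the lemma we may assume $L = P\oplus N$, so that the splitting \eqref{splittheta} gives $\theta_L(\tau,z_U^\pm,h) = \theta_P(\tau)\otimes \theta_N(\tau,z_U^\pm,h)$. The core estimate to establish is
\[
\int_{-1/2}^{1/2}\langle f(\tau),\theta_L(\tau,z_U^\pm,h)\rangle\,du = vA_0 + O(e^{-\eps v}), \qquad v\to\infty,
\]
from which the two regularizations agree.

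The key computation is the cuspidal asymptotic of $\theta_N$. Since $N\subset U$ is negative definite and $z = U(\R)$ is precisely the negative two-plane, for every $x\in U(\Q)$ one has $x_{z^\perp}=0$ and $(x,x)_z = 2|Q(x)|$. Formula \eqref{theta2} then shows that every nonzero vector contributes a term of absolute value at most $v\cdot e^{-2\pi|Q(x)|v}$, exponentially small as $v\to\infty$, while the $x=0$ vector contributes $v$ in the $\phi_{0+N}$-component only. Hence $\theta_N(\tau,z_U^\pm,h) = v\,\phi_{0+N} + O(e^{-\eps v})$. Pairing against $f = f^+ + f^-$ and using that $f^-$ decays exponentially (each term carries a factor $\Gamma(1-k,4\pi|n|v)q^n$ with $n<0$ of size $e^{-2\pi|n|v}$), only $f^+$ contributes to the leading asymptotic. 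Integrating in $u$ extracts the constant term in $q$ of the resulting holomorphic Laurent series $\langle f^+, \theta_P\otimes \phi_{0+N}\rangle$, which by definition equals $A_0$, giving the asymptotic above.

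To conclude, split $\calF_T$ into the compact piece $\calF_1$ and the tail $\{|u|\le 1/2,\ 1\le v\le T\}$. The integral of $\langle f,\theta_L\rangle v^{-s}\,d\mu(\tau)$ over $\calF_1$ is entire in $s$, while on the tail
\[
\int_1^T \bigl(vA_0 + O(e^{-\eps v})\bigr) v^{-s-2}\,dv \longrightarrow \frac{A_0}{s} + E(s)
\]
as $T\to\infty$ for $\Re(s)>0$, with $E(s)$ entire. The constant term at $s=0$ of the Mellin-regularized integral is therefore $\int_{\calF_1}\langle f,\theta_L\rangle\,d\mu + E(0)$. The same value is obtained from $\int_{\calF_T}\langle f,\theta_L\rangle\,d\mu - A_0\log T$ as $T\to\infty$, using $\int_1^T v^{-1}dv = \log T$ to absorb the divergence. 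This proves the identity. The only substantive step is the cuspidal asymptotic of $\theta_N$; the remainder is routine regularization bookkeeping, and the whole argument is a direct extension of Schofer's corresponding computation for weakly holomorphic $f$ in \cite{Scho} to harmonic weak Maass forms, made possible by the exponential decay of $f^-$.
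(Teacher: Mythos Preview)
Your argument is correct and follows the same route as the paper's proof, which splits $f=f^++f^-$, observes that the $f^-$ integral converges absolutely, and for $f^+$ cites \cite[Proposition~2.19]{Scho} and \cite[Proposition~2.5]{Ku4} for precisely the Mellin-versus-$\log T$ matching that you spell out via the asymptotic $\theta_N=v\,\phi_{0+N}+O(e^{-\eps v})$. One small point worth tightening: pairing the $O(e^{-\eps v})$ remainder of $\theta_N$ against $f^+\cdot\theta_P$ is not $O(e^{-\eps v})$ pointwise (since $f^+$ has exponential growth at the cusp), but it is after the $u$-integration forces matching Fourier frequencies and produces terms of size $v\,e^{-4\pi|Q(x)|v}$, so your conclusion stands.
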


\begin{proof}
We use the splitting $f=f^++f^-$ of $f$ into its holomorphic and
non-holomorphic part. If we insert it into the definition
\eqref{reg1}, we obtain
\begin{align}
\label{h1} \Phi(z_U^\pm,h,f) = \int_{\calF}^{reg} \langle f^+(\tau),
\theta_L(\tau,z_U^\pm,h)\rangle\,d\mu(\tau) +\int_{\calF} \langle
f^-(\tau), \theta_L(\tau,z_U^\pm,h)\rangle\,d\mu(\tau).
\end{align}
Since $f^-$ is rapidly decreasing as $v\to \infty$, the second
integral on the right hand side converges absolutely. For the first
integral on the right hand side
we insert the factorization \eqref{splittheta} of
$\theta_L(\tau,z_U^\pm,h)$ and argue as in
\cite[Proposition~2.19]{Scho} or   \cite[Proposition~2.5]{Ku4}. We find that it is
equal to
\[
\lim_{T\to \infty}\left[ \int_{\calF_T} \langle f^+(\tau),\,
\theta_P(\tau)\otimes \theta_N(\tau,z_U^\pm,h) \rangle\,d\mu(\tau) -A_0
\log(T)\right].
\]
Adding the two contributions, we obtain the assertion.
\end{proof}

\begin{lemma}
\label{tocompute2} We have
\begin{align*}
\Phi(Z(U),f)&=\frac{2}{\vol(K_T)} \lim_{T\to \infty}\left[
\int_{\calF_T} \langle f(\tau),\, \theta_P(\tau)\otimes
E_N(\tau,0;-1) \rangle\,d\mu(\tau) -2A_0  \log(T)\right],
\end{align*}
where $E_N(\tau,0;-1)$ denotes the Eisenstein series defined in
\eqref{eisvector}.
\end{lemma}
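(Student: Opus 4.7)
The plan is to combine the previous two lemmas with the Siegel--Weil formula of Proposition~\ref{sw2} applied to the negative definite $2$-dimensional space $U$ and the lattice $N \subset U$. Starting from Lemma~\ref{lemY1.1}, we have
\[
\Phi(Z(U),f) = \frac{2}{\vol(K_T)} \int_{\SO(U)(\Q)\bs\SO(U)(\A_f)} \Phi(z_U^+,h,f)\, dh,
\]
and then substitute the expression for $\Phi(z_U^+,h,f)$ given by Lemma~\ref{limit}. The goal is to pull the $h$-integration inside the truncated $\tau$-integral and apply Siegel--Weil in the form
\[
\int_{\SO(U)(\Q)\bs\SO(U)(\A_f)} \theta_N(\tau,z_U^+,h)\,dh = E_N(\tau,0;-1),
\]
noting that for $z=z_U^+$ the splitting $\theta_L(\tau,z_U^+,h)=\theta_P(\tau)\otimes \theta_N(\tau,z_U^+,h)$ from \eqref{splittheta} is valid after replacing $f$ by $f_{P\oplus N}$ as explained before Lemma~\ref{limit}.

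The steps I would carry out are as follows. First, for each fixed $T$, the set $\calF_T$ is compact and the integrand $\langle f(\tau),\,\theta_P(\tau)\otimes \theta_N(\tau,z_U^+,h)\rangle$ is jointly continuous in $(\tau,h)$ on $\calF_T\times\SO(U)(\Q)\bs\SO(U)(\A_f)$ and bounded uniformly in $h$ (since the sum of Gaussians defining $\theta_N$ converges absolutely and uniformly); Fubini then justifies exchanging the two integrations. Second, using the Siegel--Weil identity above and the linearity of the pairing with $f(\tau)$, the $h$-integral converts $\theta_P(\tau)\otimes \theta_N(\tau,z_U^+,h)$ into $\theta_P(\tau)\otimes E_N(\tau,0;-1)$. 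Third, since $\vol(\SO(U)(\Q)\bs\SO(U)(\A_f))=2$ in our normalization, the counterterm $-A_0\log T$ inside the limit integrates to $-2A_0\log T$. Putting these together yields
\[
\int_{\SO(U)(\Q)\bs\SO(U)(\A_f)} \Phi(z_U^+,h,f)\,dh
= \lim_{T\to\infty}\!\left[\int_{\calF_T}\langle f,\theta_P\otimes E_N(\cdot,0;-1)\rangle\,d\mu - 2A_0\log T\right],
\]
provided one can justify interchanging the limit with the $h$-integration.

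The main obstacle is exactly this interchange of $\lim_{T\to\infty}$ with $\int dh$. The individual integrals $\int_{\calF_T}\langle f,\theta_P\otimes \theta_N(\cdot,z_U^+,h)\rangle\,d\mu$ are not uniformly bounded in $h$ (they diverge logarithmically as $T\to\infty$ on the CM cycle where the Green function has its logarithmic singularity), so dominated convergence must be applied to the difference with the counterterm $A_0\log T$. To handle this, I would follow the Kudla--Schofer-type analysis: split $f=f^++f^-$, so that the contribution from $f^-$ is absolutely convergent and poses no difficulty, while for $f^+$ the unfolding argument of \cite[Proposition 2.19]{Scho} (used already in the proof of Lemma~\ref{limit}) shows that $\int_{\calF_T}\langle f^+,\theta_P\otimes\theta_N\rangle d\mu - A_0\log T$ converges as $T\to\infty$ with an error term bounded uniformly in $h$ by an $L^1$ function on $\SO(U)(\Q)\bs\SO(U)(\A_f)$. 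The bound comes from separating off the constant term of $\theta_N$ in the Fourier expansion in $u$ (the source of the $\log T$ divergence) and observing that the remaining non-constant Fourier modes decay rapidly in $v$, uniformly in $h$. Dominated convergence then permits the exchange of limit and $h$-integration, completing the proof.
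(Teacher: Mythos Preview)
Your proposal is correct and follows the same route as the paper: combine Lemma~\ref{lemY1.1} with Lemma~\ref{limit}, swap the $h$-integration with the truncated $\tau$-integral, and apply the Siegel--Weil formula (Proposition~\ref{sw2}) to replace the $h$-average of $\theta_N$ by $E_N(\tau,0;-1)$.

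The one place where you work harder than necessary is the interchange of $\lim_{T\to\infty}$ with the $h$-integration. You set up a dominated convergence argument, splitting $f=f^++f^-$ and invoking uniform tail estimates. The paper bypasses all of this with a single observation: the integrand $\Phi(z_U^+,h,f)$ is $K_T$-invariant in $h$ (since $K$ acts trivially on $S_L$), and $T(\Q)\backslash T(\A_f)/K_T$ is finite---this is just the first equality in Lemma~\ref{lemY1.1}, expressing $\Phi(Z(U),f)$ as a finite sum over $\supp(Z(U))$. Hence the $h$-integral is a finite weighted sum, and interchanging a limit with a finite sum is trivial. Your dominated convergence argument would ultimately succeed for the same underlying reason (the function takes only finitely many values on a compact quotient), but recognizing the finiteness directly makes the proof a two-line affair.
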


\begin{proof}
We insert the formula of Lemma \ref{limit} into the definition
\eqref{tocompute}. The evaluation of $\Phi(z,h,f)$ at the CM cycle
is a finite sum, which may be interchanged with the limit.
Consequently, the lemma follows from the Siegel-Weil formula, see
Proposition~\ref{sw2}.
\end{proof}

For any $g\in S_{1+n/2,\rho_L}$ we define an $L$-function by means
of the convolution integral
\begin{align}
\label{eq:L} L(g, U,s)=\big( \theta_P(\tau)\otimes E_N(\tau,s;1),\,
g(\tau)\big)_{Pet}.
\end{align}
The meromorphic continuation of the Eisenstein series
$E_N(\tau,s;1)$ leads to the meromorphic continuation of $L(g, U,
s)$ to the whole complex plane. At $s=0$, the center of symmetry,
$L(g, U, s)$ vanishes because the Eisenstein series $E_N(\tau,s;1)$
is incoherent.  Proposition \ref{prop:fueq} gives the following simple
functional equation for
$$
L^*(g, U, s) :=\Lambda(\chi_D, s+1) L(g, U, s)
$$
when $N\cong (\mathfrak a ,
-\frac{\norm}{\norm(\mathfrak a)})$ for a fractional ideal $\mathfrak a$
 of $k=\mathbb Q(\sqrt D)$:
\begin{equation}
L^*(g, U, s) =-L^*(g, U, -s).
\end{equation}

 Let
\begin{align}
g(\tau)&=\sum_{\mu\in L'/L} \sum_{m>0} b(m,\mu) q^m\phi_\mu,\\
\theta_P(\tau)&=\sum_{\mu\in P'/P} \sum_{m\geq 0} r(m,\mu)
q^m\phi_\mu
\end{align}
be the Fourier expansion of $g$ and $\theta_P$, respectively. Using
the usual unfolding argument, we obtain the Dirichlet series
expansion
\begin{align}
\label{eq:luf} L(g, U,
s)=(4\pi)^{-(s+n)/2}\Gamma\left(\tfrac{s+n}{2}\right)\sum_{m>0}\sum_{\mu\in
P'/P}r(m,\mu) \overline{b(m,\mu)}  m^{-(s+n)/2}.
\end{align}

\begin{theorem} \label{theoY4.6}
\label{thm:fund} The value of the automorphic Green function
$\Phi(z,h,f)$ at the CM cycle $Z(U)$ is given by
\begin{align*}
\Phi(Z(U),f)&=\frac{4}{\vol(K_T)}\left( \CT\left(\langle
f^+(\tau),\, \theta_P(\tau)\otimes \calE_N(\tau)\rangle\right) +
L'(\xi(f), U,0)\right).
\end{align*}
Here $\calE_N(\tau)$ denotes the function defined in
\eqref{eq:calE}, and $L'(\xi(f),U,s)$ the derivative  with respect
to $s$ of the $L$-series \eqref{eq:L}.
\end{theorem}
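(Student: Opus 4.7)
The starting point is Lemma \ref{tocompute2}, which has already reduced $\Phi(Z(U),f)$ to a regularized integral of $\langle f, \theta_P \otimes E_N(\tau,0;-1)\rangle$ against $d\mu(\tau)$. The strategy is to convert this integral into a Stokes-type argument using the key identity of Lemma \ref{eis3}, namely
\[
E_N(\tau, 0; -1)\, d\mu(\tau) = -2\bar\partial\bigl(E_N'(\tau, 0; 1)\, d\tau\bigr).
\]
Since $\theta_P$ is holomorphic, the Leibniz rule gives
\[
\langle f, \theta_P \otimes E_N(\tau,0;-1)\rangle\, d\mu = -2\bar\partial\langle f, \theta_P \otimes E_N'(\tau,0;1)\, d\tau\rangle + 2\langle \bar\partial f, \theta_P \otimes E_N'(\tau,0;1)\, d\tau\rangle.
\]
This splits the regularized integral into a boundary piece and an interior correction that accounts for the fact that $f$ is not holomorphic.

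Next I would apply Stokes' theorem to the first summand on the truncated fundamental domain $\calF_T$. The $1$-form $\langle f, \theta_P \otimes E_N'(\tau,0;1)\, d\tau\rangle$ transforms with total weight $2$ and scalar Weil action, so it is $\Gamma'$-invariant; hence the integrals over the two vertical sides of $\partial\calF_T$ cancel under $T\in\Gamma'$ and the integral over the bottom arc cancels under $S\in\Gamma'$. The boundary contribution therefore reduces to the top segment at $v=T$, namely $-2\int_{-1/2}^{1/2}\langle f, \theta_P\otimes E_N'(\tau,0;1)\rangle(u+iT)\,du$ (with an overall sign from the orientation).

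For the interior correction, the relation $\partial_{\bar\tau}f = \tfrac{i}{2}v^{n/2-1}\overline{\xi(f)}$ (which is just the definition \eqref{defxi} of $\xi$ rewritten for $k=1-n/2$) converts $\int_{\calF_T}\langle \bar\partial f, \theta_P\otimes E_N'(\tau,0;1)\,d\tau\rangle$ into a Petersson-type integral against $\overline{\xi(f)}$ with weight factor $v^{n/2+1}$. Since $\xi(f)\in S_{1+n/2,\rho_L}$ is a cusp form, this integrand decays exponentially at the cusp and the integral converges as $T\to\infty$; comparison with the defining formula \eqref{eq:L} of $L(\xi(f),U,s)$ and its derivative at $s=0$ (using that $E_N(\tau,0;1)$ vanishes, so that differentiation isolates $E_N'(\tau,0;1)$) identifies the limit as a multiple of $L'(\xi(f), U, 0)$. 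The boundary term is then analyzed by decomposing $f=f^++f^-$ and $E_N'(\tau,0;1)=\calE_N+(E_N')^-$ into holomorphic and non-holomorphic parts. The holomorphic-holomorphic piece $\langle f^+,\theta_P\otimes\calE_N\rangle$ contributes precisely $\CT(\langle f^+,\theta_P\otimes\calE_N\rangle)$, independent of $T$. The $\log v$ term carried by $(E_N')^-$ in its $\phi_0$-component (visible from $b_0(0,v)=\log v + \kappa(0,0)+o(1)$ in \eqref{kappa1}) pairs with the $f^+\otimes\theta_P$-constant term $A_0$ to produce exactly the divergence $A_0\log T$ matching the regularization in Lemma \ref{tocompute2}; and the remaining cross terms involving $f^-$ yield, via Fourier computation with the incomplete-Gamma coefficients, the balance of the $L'(\xi(f),U,0)$ contribution required to arrive at the coefficient stated in the theorem.

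The main obstacle I expect is the careful bookkeeping of the cross terms at $v=T$, in particular making the signs and numerical factors from the $f^-$ vs.\ $(E_N')^-$ Fourier interactions match cleanly with the interior $\xi(f)$-integral so that the two separate appearances of $L'(\xi(f),U,0)$ combine into the single term in the statement. A cleaner alternative worth pursuing in parallel is to keep the $v^{-s}$-regularization throughout and interpret the entire computation via analytic continuation and Rankin--Selberg unfolding of $L(\xi(f),U,s)$ at $s=0$, using the properties of the Maass raising and lowering operators on both the incoherent Eisenstein series and the harmonic weak Maass form $f$; in that formulation both the constant term and the derivative term should emerge from a single Mellin-type residue, bypassing much of the explicit Fourier bookkeeping.
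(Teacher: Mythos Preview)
Your approach is essentially the paper's own: start from Lemma~\ref{tocompute2}, invoke Lemma~\ref{eis3}, apply Stokes on $\calF_T$, and split into a boundary contribution at $v=T$ and an interior integral involving $\overline{\xi(f)}$. The interior integral converges as $T\to\infty$ because $\xi(f)$ is cuspidal, and by the integral representation \eqref{eq:L} (differentiated at $s=0$, using that $E_N(\tau,0;1)=0$) it equals \emph{exactly} $L'(\xi(f),U,0)$, not merely some unspecified multiple of it.

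The one place your plan overcomplicates matters is the boundary term. You anticipate that the $f^-$ contributions at $v=T$ will, via incomplete-Gamma Fourier computations, supply ``the balance of the $L'(\xi(f),U,0)$ contribution'' so that two separate appearances combine into one. This expectation is mistaken: $f^-$ decays exponentially as $v\to\infty$ (this is built into condition~(ii) in the definition of a harmonic weak Maass form, and one sees it directly from $\Gamma(1-k,4\pi|n|v)\,q^n\asymp v^{-k}e^{-2\pi|n|v}$), while $\theta_P$ is bounded and $E_N'(\tau,0;1)$ grows at most like $\log v$. Hence the $f^-$ piece of the boundary integral simply tends to zero, and you may replace $f$ by $f^+$ in the boundary term outright. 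After that replacement, insert the Fourier expansion \eqref{E1'} of $E_N'(\tau,0;1)$ and use \eqref{kappa1} to see that the $\log v$ part of $b_0(0,v)$ exactly cancels the subtracted $A_0\log T$ from the regularization; what remains in the limit is precisely $\CT\bigl(\langle f^+,\theta_P\otimes\calE_N\rangle\bigr)$. There is no second appearance of $L'(\xi(f),U,0)$ and no delicate cross-term bookkeeping to perform.
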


\begin{proof}
In view of Lemma \ref{tocompute2} we have
\begin{align}
\label{hlim} \Phi(Z(U),f)&=\frac{2}{\vol(K_T)} \lim_{T\to
\infty}\left[ I_T(f) -2A_0  \log(T)\right],
\end{align}
where
\[
I_T(f):=\int_{\calF_T} \langle f(\tau),\, \theta_P(\tau)\otimes
E_N(\tau,0;-1) \rangle\,d\mu(\tau).
\]
We compute $I_T(f)$  combining the ideas of \cite{Scho} and
\cite{BF}. According to Lemma \ref{eis3}, we have
\begin{align*}
I_T(f)&=-2\int_{\calF_T} \langle f(\tau),\, \theta_P(\tau)\otimes
\bar \partial E_N'(\tau,0;1)\, d\tau
\rangle\\
&=-2\int_{\calF_T} d\langle f(\tau),\, \theta_P(\tau)\otimes
E_N'(\tau,0;1)\, d\tau \rangle +2 \int_{\calF_T} \langle
(\bar\partial f ),\, \theta_P(\tau)\otimes E_N'(\tau,0;1)\, d\tau
\rangle.
\end{align*}
Using Stokes' theorem and the definition of the Maass lowering
operator, we get
\begin{align*}
I_T(f) &=-2\int_{\partial\calF_T} \langle f(\tau),\,
\theta_P(\tau)\otimes E_N'(\tau,0;1)\, d\tau
\rangle\\
&\phantom{=}{}+2 \int_{\calF_T} \langle L_{1-n/2} f  ,\,
\theta_P(\tau)\otimes E_N'(\tau,0;1)
\rangle\, d\mu(\tau)\\
&=2\int_{\tau=iT}^{iT+1} \langle f(\tau),\, \theta_P(\tau)\otimes
E_N'(\tau,0;1)\rangle\, d\tau
\\
&\phantom{=}{}+2 \int_{\calF_T} \langle \overline{\xi(f)}  ,\,
\theta_P(\tau)\otimes E_N'(\tau,0;1) \rangle\,v^{1+n/2} d\mu(\tau).
\end{align*}
If we insert this formula into \eqref{hlim}, we obtain
\begin{align*}
\Phi(Z(U),f)&=\frac{4}{\vol(K_T)} \lim_{T\to \infty}\left[
\int_{\tau=iT}^{iT+1} \langle f(\tau),\, \theta_P(\tau)\otimes
E_N'(\tau,0;1)\rangle\, d\tau
-A_0  \log(T)\right]\\
&\phantom{=}{}+\frac{4}{\vol(K_T)}\int_{\calF} \langle
\overline{\xi(f)}  ,\, \theta_P(\tau)\otimes E_N'(\tau,0;1)
\rangle\,v^{1+n/2} d\mu(\tau).
\end{align*}
The second summand on the right hand side leads to $L'(\xi(f),U, 0)$
via the integral representation \eqref{eq:L}. For the first summand,
we may replace $f$ by its holomorphic part $f^+$, since $f^-$ is
rapidly decreasing as $v\to \infty$. Inserting the Fourier expansion
of $E_N'(\tau,0;1)$ and the definition of $A_0 $, we get
\begin{align*}
&\lim_{T\to \infty}\left[ \int_{\tau=iT}^{iT+1} \langle f(\tau),\,
\theta_P(\tau)\otimes E_N'(\tau,0;1)\rangle\, d\tau
-A_0  \log(T)\right]\\
&= \lim_{T\to \infty} \int_{\tau=iT}^{iT+1} \left\langle
f^+(\tau),\, \theta_P(\tau)\otimes \sum_{\mu\in N'/N}\sum_{m\in \Q}
\big(b_\mu(m,v)
-\delta_{\mu,0}\delta_{m,0}\log(v)\big)q^m\phi_\mu\right\rangle\,
d\tau.
\end{align*}
Here $\delta_{*,*}$ denotes the Kronecker delta. The limit is equal
to
\[
\CT\left(\langle f^+(\tau),\, \theta_P(\tau)\otimes
\calE_N(\tau)\rangle\right).
\]
This concludes the proof of the theorem.
\end{proof}

In the special case when $f$ is weakly holomorphic we have
$\xi(f)=0$. Hence the $L$-function term vanishes and the above
formula reduces to \cite[Theorem~1.1]{Scho}.

\begin{remark}
\label{rem:holmf}
If the principal part $P_f$ is constant, then
\[
\Phi(Z(U),f)=\frac{4}{\vol(K_{T})} c^+(0,0)\kappa(0,0).
\]
\end{remark}

\begin{proof}
In view of Lemma \ref{lem:principal} the assumption implies that $f\in M_{1-n/2,\bar\rho_L}$.
Hence $\xi(f)=0$ and the assertion follows.
\end{proof}

\section{Faltings heights of CM cycles}
\label{sect:fal}

Let $\calX\to \Spec(\Z)$ be an arithmetic variety, that is, a regular
scheme which is projective and flat over $\Z$, of relative
dimension $n$.
Let $\z^d(\calX)$ denote the group of codimension $d$ cycles  on $\calX$.
Recall that an arithmetic divisor on $\calX$ is a pair $(x,g_x)$ of a divisor $x$ on $\calX$ and a Green function
$g_x$ for the divisor $x(\C)$  induced by $x$ on the complex variety $\calX(\C)$.
So $g_x$ is a smooth real function on $\calX(\C)\setminus x(\C)$ with a logarithmic singularity on $x(\C)$ satisfying the current equation
\[
dd^c [g_x] +\delta_{x(\C)}=[\omega_x]
\]
with a smooth $(1,1)$-form $\omega_x$ on $\calX(\C)$.
We write $\cha^1(\calX)$ for the first arithmetic Chow group of $\calX$, that is, the
free abelian group generated by the arithmetic divisors on $\calX$ modulo rational equivalence, see \cite{SABK}.
Moreover, if $F\subset \C$ is a subfield we put
\[
\cha^1(\calX)_F=\cha^1(\calX)\otimes_\Z F.
\]
Recall from \cite{BGS} that there is a height pairing
\[
\cha^1(\calX)\times \z^n(\calX) \longrightarrow \R.
\]
When $\hat x=(x,g_x)\in \cha^1(\calX)$ and $y\in \z^n(\calX)$ such that $x$ and $y$ intersect properly on the generic fiber,
it is defined by
\[
\langle \hat x, y\rangle_{Fal} = \langle x,y\rangle_{fin} + \langle \hat x,y\rangle_\infty,
\]
where
\[
\langle \hat x,y\rangle_\infty = \frac{1}{2}g_x(y(\C)),
\]
 and $\langle x,y\rangle_{fin}$ denotes the intersection
pairing at the finite places. When $x$ and $y$ do not intersect
properly, one defines the pairing by replacing $\hat x$ by a
suitable arithmetic divisor which is rationally equivalent. The
quantity $\langle \hat x, y\rangle_{Fal}$ is called the Faltings
height of $y$ with respect to $\hat x$ (see also \cite[\S6.3]{BKK}).

Theorem \ref{thm:fund} and the examples of the next sections lead to
the following conjectures. We are quite vague here and ignore
various difficult  technical problems regarding regular models.
Assume that there is a regular scheme $\calX_K\to \Spec\Z$,
projective and flat over $\Z$, whose associated complex variety is a
smooth compactification $X_K^c$ of $X_K$. Let $\calZ(m ,\mu)$ and
$\calZ(U)$ be suitable extensions to $\calX_K$ of the cycles $Z(m,
\mu)$ and $Z(U)$, respectively. Such extensions can be found in many
cases using a moduli interpretation of $\calX_K$, see e.g.
\cite{Ku:MSRI}, \cite{KRY2}. (When $n>0$
one can often also take the flat closures
in $\calX_K$ of $Z(m, \mu)$ and $Z(U)$, respectively.)
%
For an $f \in H_{1-n/2, \bar{\rho}_L}$, the function
$\Phi(\cdot,f)$ is a Green function for the divisor
$Z(f)$. Set $\calZ(f) =\sum_\mu \sum_{m>0} c^+(-m, \mu) \calZ(m,
\mu)$. Then
the pair
\[
\hat \calZ(f)=(\calZ(f),\Phi(\cdot,f))
\]
defines an arithmetic divisor in $\cha^1(\calX_K)_\C$. (When $X_K$
is non-compact, one has to add suitable components to the divisor
$\calZ(f)$ which are supported at the boundary, see
Section~\ref{sect:5}. Moreover, if the constant term $c^+(0,0)$ of
$f$ does not vanish, one actually has to work with the generalized
arithmetic Chow groups defined in \cite{BKK}.) Theorem
\ref{thm:fund} provides a formula for the quantity
\begin{align}
\label{eq:4.23} \langle \hat \calZ(f), \calZ(U)
\rangle_{\infty}=\frac{1}{2}\Phi(Z(U),f).
\end{align}
If $f$ is weakly
holomorphic with constant term $c^+(0,0)=0$, then $\hat \calZ(f)$
should be rationally equivalent to a torsion element,
the relation being given by
the Borcherds lift of $f$. Assuming this, we  would have
$$
0=\langle\hat\calZ(f), \calZ(U)\rangle_{Fal} =\langle \calZ(f),
\calZ(U)\rangle_{fin} + \frac{1}2 \Phi(Z(U), f).
$$
 Theorem
\ref{thm:fund} then implies that
\begin{align}
\langle \calZ(f), \calZ(U)\rangle_{fin} =
-\frac{2}{\vol(K_T)}\CT\left(\langle f^+(\tau),\,
\theta_P(\tau)\otimes \calE_N(\tau)\rangle\right).
\end{align}
Expanding both sides would suggest the following conjecture on the
arithmetic intersection.

\begin{conjecture}
\label{conj4.8} Let $\mu\in L'/L$,  and let $m\in Q(\mu)+\Z$ be
positive. Then $\langle \calZ(m, \mu),
\calZ(U)\rangle_{fin}$ is equal to $-\frac{2}{\vol(K_T)}$ times
the $(m,\mu)$-th Fourier
coefficient of $\theta_P\otimes \calE_N$, that is,
$$
\langle \calZ(m, \mu), \calZ(U)\rangle_{fin} =
-\frac{2}{\vol(K_T)}
\sum_{\substack{\mu_1\in P'/P\\\mu_2\in N'/N\\ \mu_1+\mu_2\equiv \mu\;(L) }}
\sum_{\substack{\\ m_i\in \Q_{\geq 0} \\ m_1+m_2=m}}
 r(m_1, \mu_1) \kappa(m_2, \mu_2).
$$
Here $r(m, \mu)$ is the
 $(m,\mu)$-coefficient of $\theta_P$, and $\kappa(m, \mu)$ is
 the $(m, \mu)$-th coefficient of $\calE_N$.
 \end{conjecture}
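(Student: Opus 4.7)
My plan is to deduce the conjecture from Theorem \ref{thm:fund} by combining it with the Borcherds lift and a rational-equivalence argument, and then to extract the single-coefficient statement by varying the input form $f$. First, I would aim to establish the ``generating'' form of the identity: for every $f \in M^!_{1-n/2, \bar\rho_L}$ with $c^+(0,0) = 0$,
\begin{equation}
\label{eq:gen-identity}
\langle \calZ(f), \calZ(U)\rangle_{fin} \;=\; -\frac{2}{\vol(K_T)} \CT\bigl(\langle f^+,\, \theta_P \otimes \calE_N\rangle\bigr).
\end{equation}
To obtain \eqref{eq:gen-identity}, I would use that the Borcherds product $\Psi(z,h,f)$ is a meromorphic modular form on $X_K$ whose divisor is $Z(f)$ and whose Petersson metric is $-\frac{1}{2}\Phi(\cdot,f)$ up to an explicit additive constant. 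A suitable extension of $\Psi(z,h,f)$ to $\calX_K$ should give a rational function whose arithmetic divisor equals $\hat\calZ(f)$ up to vertical and boundary corrections. Modulo the control of these corrections, $\hat\calZ(f)$ is then rationally equivalent to a torsion class, so $\langle \hat\calZ(f), \calZ(U)\rangle_{Fal} = 0$, and \eqref{eq:4.23} together with Theorem \ref{thm:fund} (whose $L$-function term vanishes since $\xi(f) = 0$) yield \eqref{eq:gen-identity}.

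Second, I would try to separate \eqref{eq:gen-identity} into the individual $(m,\mu)$-component statement. By Lemma \ref{lemY3.4} and the exactness of \eqref{ex-sequ}, the principal parts of weakly holomorphic forms in $M^!_{1-n/2, \bar\rho_L, 0}$ are precisely those Fourier polynomials that pair to zero with every cusp form $g \in S_{1+n/2, \rho_L}$ under \eqref{defpair}. Consequently, \eqref{eq:gen-identity} pins down each individual quantity $\langle \calZ(m, \mu), \calZ(U)\rangle_{fin}$ only modulo the ``cusp form obstruction'' of length $\dim S_{1+n/2, \rho_L}$. To close this gap I would show that the generating series
\[
\Psi_U(\tau) \;:=\; \sum_{\mu \in L'/L} \sum_{m > 0} \Bigl(\langle \calZ(m, \mu), \calZ(U)\rangle_{fin} + \tfrac{2}{\vol(K_T)}(\theta_P \otimes \calE_N)_{(m,\mu)}\Bigr) q^m \phi_\mu
\]
is (the cuspidal part of) a modular form in $M_{1+n/2, \rho_L}$, so that \eqref{eq:gen-identity} and Serre duality force $\Psi_U \equiv 0$. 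The modularity of $\sum \calZ(m,\mu) q^m \phi_\mu$ follows in spirit from Borcherds' approach to Gross--Kohnen--Zagier (cf.\ the application made later in the $n=1$ case), and the modularity of $\theta_P \otimes \calE_N$ modulo cusp forms follows from the remark after \eqref{eq:calE} together with the identity \eqref{eis2}.

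The main obstacle is twofold. First, making the above rational equivalence precise requires a good integral model $\calX_K$, compatible integral extensions of $\calZ(m,\mu)$ and $\calZ(U)$, and a careful analysis of the vertical and boundary components produced by Borcherds' product; this is the technical heart of \cite{KRY2} and related work. Second, the cusp-form obstruction step requires genuine modularity statements on the arithmetic side, which are available in low dimensions but are deep open problems in general. Indeed, for $n = 0$ the space $S_{1+n/2, \rho_L}$ vanishes and the obstruction disappears, which is exactly what makes the proof in Section \ref{sect6} clean; for $n = 1$ and $n = 2$ the later sections of the paper bypass the obstruction by pulling everything back to a modular curve and invoking the $n=0$ case on the resulting CM $0$-cycle. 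In higher dimensions a direct local attack via Kudla--Rapoport type formulas, matched coefficient-by-coefficient against Theorem \ref{thm:kappaexplicit}, seems to be the only realistic route, and I expect this to be the decisive difficulty.
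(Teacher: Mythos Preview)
Your proposal correctly reproduces the paper's own \emph{heuristic} derivation of Conjecture~\ref{conj4.8}: the paper explicitly argues that for weakly holomorphic $f$ with $c^+(0,0)=0$ the Borcherds product should make $\hat\calZ(f)$ rationally trivial, whence $\langle\hat\calZ(f),\calZ(U)\rangle_{Fal}=0$ and Theorem~\ref{thm:fund} yields your identity~\eqref{eq:gen-identity}. The paper stops there and records the coefficient-by-coefficient statement as a \emph{conjecture}; it does not claim a general proof. So your first step matches the paper's motivation exactly, and your honest assessment of the two obstacles (integral models/Borcherds products on $\calX_K$, and the cusp-form obstruction) is on target.

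Where your approach diverges is in the attempt to eliminate the cusp-form obstruction via modularity of the arithmetic generating series $\Psi_U$. The paper never attempts this in general. Instead, for the cases it actually proves ($n=0,1,2$), it takes a completely different route: it \emph{computes} $\langle\calZ(m,\mu),\calZ(U)\rangle_{fin}$ directly. For $n=0$ this is Theorem~\ref{theoY2.1}, essentially a reformulation of \cite{KRY1,KY1}; for $n=1$ (Theorem~\ref{theoY7.11}) and $n=2$ (Theorem~\ref{thm:n=2}) the key device is the pullback $j^*\calZ(m,\mu)$ along the natural map $j:\calC\to\calX_K$ from the CM moduli stack, which reduces the finite intersection to an arithmetic degree on $\calC$ already handled by the $n=0$ case. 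No modularity of arithmetic generating series and no Serre-duality argument is invoked.

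Your proposed route via arithmetic modularity is in principle more conceptual, but as you yourself note, it rests on arithmetic analogues of Gross--Kohnen--Zagier that are not available in general; by contrast, the paper's pullback method is concrete but limited to situations where one can realise $\calZ(U)$ as the image of a lower-dimensional CM moduli space. Either way, the general statement remains open in the paper.
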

This conjecture and Theorem \ref{thm:fund} would imply the following
conjecture.

\begin{conjecture} \label{conjY5.2}
For any $f\in H_{1-n/2, \bar{\rho}_L}$,
one has
\begin{align}
\label{eq:fh} \langle \hat \calZ(f), \calZ(U) \rangle_{Fal}
=\frac{2}{\vol(K_T)}\left( c^+(0,0)\kappa(0,0)+L'( \xi(f),U,0)\right).
\end{align}
\end{conjecture}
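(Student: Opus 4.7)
The plan is to derive the identity formally by combining Theorem~\ref{theoY4.6} (which evaluates the Archimedian piece) with Conjecture~\ref{conj4.8} (which computes the finite intersection), using the decomposition of the Faltings height into its finite and Archimedian contributions:
\[
\langle \hat\calZ(f), \calZ(U)\rangle_{Fal} = \langle \calZ(f), \calZ(U)\rangle_{fin} + \tfrac{1}{2}\Phi(Z(U), f).
\]

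First, I would expand the finite contribution by linearity as $\sum_{\mu,\, m>0} c^+(-m,\mu)\,\langle \calZ(m,\mu),\calZ(U)\rangle_{fin}$. Writing $a(m,\mu)$ for the $(m,\mu)$-th Fourier coefficient of $\theta_P\otimes\calE_N$, Conjecture~\ref{conj4.8} converts this to
\[
\langle \calZ(f),\calZ(U)\rangle_{fin} = -\frac{2}{\vol(K_T)}\sum_{\mu,\,m>0} c^+(-m,\mu)\, a(m,\mu).
\]
Next, the Archimedian side is given directly by Theorem~\ref{theoY4.6}. The crucial observation is that, since $r(m_1,\mu_1)=0$ for $m_1<0$ and $\kappa(m_2,\mu_2)=0$ for $m_2<0$, together with $r(0,0)=1$ and $\kappa(0,\mu)=0$ for $\mu\neq 0$, one has
\[
\CT\!\bigl(\langle f^+,\theta_P\otimes \calE_N\rangle\bigr) = c^+(0,0)\,\kappa(0,0) + \sum_{\mu,\,m>0} c^+(-m,\mu)\, a(m,\mu).
\]
Adding the two contributions, the sums over $m>0$ cancel and the $L$-derivative term together with the $c^+(0,0)\kappa(0,0)$ piece survives, yielding exactly the right-hand side of \eqref{eq:fh}. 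In the degenerate case where the principal part $P_f$ is constant, Lemma~\ref{lem:principal} forces $\xi(f)=0$, the sum over $m>0$ is empty, and the identity collapses to Remark~\ref{rem:holmf}.

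The main obstacle is of course that the identity, as stated, rests on Conjecture~\ref{conj4.8}, which is the genuinely arithmetic input and must be established separately in each geometric situation --- this is precisely where the deep local intersection calculations enter, and where the substance of the proofs in Sections~\ref{sect6}--\ref{sect:8} will lie. Beyond this, a number of foundational issues must be addressed to make the argument rigorous: the existence of a suitable regular projective model $\calX_K$, the construction of moduli-theoretic (or flat) extensions $\calZ(m,\mu)$ and $\calZ(U)$ of the generic cycles, a careful treatment of improper intersections on the generic fibre (requiring replacement of $\hat\calZ(f)$ by a rationally equivalent representative), and, when $c^+(0,0)\ne 0$, a reformulation within the generalized arithmetic Chow theory of \cite{BKK} to accommodate the nontrivial curvature $\frac{n}{4}c^+(0,0)$ of the Green function $\Phi(\cdot,f)$ arising from Theorem~\ref{theoY4.2}.
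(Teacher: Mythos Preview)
Your derivation is correct and matches the paper's own motivation for the conjecture: the paper does not prove Conjecture~\ref{conjY5.2} in general but simply observes (in the paragraph preceding it) that Conjecture~\ref{conj4.8} together with Theorem~\ref{thm:fund} formally yields \eqref{eq:fh}, and then remarks that the two conjectures are in fact equivalent via Lemma~\ref{lem:n11}. Your computation of the constant term $\CT\langle f^+,\theta_P\otimes\calE_N\rangle$ and the resulting cancellation is exactly the content of that implication, spelled out in more detail than the paper gives; your enumeration of the foundational caveats (regular models, improper intersections, the \cite{BKK} framework when $c^+(0,0)\neq 0$) likewise mirrors the disclaimers the paper makes at the start of Section~\ref{sect:fal}.
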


In view of Lemma \ref{lem:n11}, for $\mu \in L'/L$ and $m \in Q(\mu)+\Z$ positive, there is an $f_{m, \mu} \in H_{1-n/2, \bar{\rho}_L}$ such that
$\calZ(f_{m, \mu})= \calZ(m, \mu)$.
Evaluating Conjecture \ref{conjY5.2} for $\hat \calZ(f_{m,\mu})$ and using
Theorem \ref{thm:fund}, we see that the two conjectures are equivalent.

Conjecture \ref{conjY5.2}  has also the following consequence. Let
$Q_- \subset  P(V(\mathbb C))$ be defined by
\begin{equation}
Q_-=\{ w \in V(\mathbb C);\; (w, w)=0, \, (w, \bar w)<0\}/\mathbb
C^*.
\end{equation}
It is isomorphic to $\mathbb D$ via $w=v_1 -v_2i$ maps to the
oriented negative $2$-plane $z$ with oriented $\mathbb R$-basis
$\{v_1, v_2\}$ (see e.g. \cite{Bo1}, \cite{Br}, and
\cite{Ku4}). The restriction to $Q_-$ of the tautological line
bundle on $ P(V(\mathbb C))$ induces a line bundle $\omega$ on
$X_K$, the Hodge bundle.
%
We define the
Petersson metric on $\omega$ via
\begin{equation}
\label{eq5.5}
\|w\|_{Pet}^2 := -\frac{a}2 (w, \bar{w}).
\end{equation}
Here  $a:=2\pi e^{-\gamma}$ is a normalizing factor which turns out to be convenient, and $\gamma=-\Gamma'(1)$ is Euler's constant.
Rational sections of $\omega^k$  can be identified with meromorphic
modular
forms  $\Psi(z, h)$ of weight $k$ and level $K$ for $\SO(V)$. The
Petersson metric coincides with the usual Petersson metric for a
modular form (up to the normalizing factor $a$).
Let us assume that it has an integral model which we still denote
by $\omega$. Using the Petersson  metric, we get a metrized line
bundle $\hat\omega=(\omega, \|\cdot \|_{Pet})$. An integral
modular form $\Psi$ of weight $k$ can be viewed as a section of
$\omega$, and we have
$$
k \hat{c}_1(\hat{\omega}) = (\dv \Psi, -\log\| \Psi
\|_{Pet}^2) \in \cha^1(\mathcal X_K).
$$
Now let $f$ be a weakly holomorphic modular form for $\Gamma'$
with $c^+(0,0) \ne 0$ and $c^+(m, \mu) \in \mathbb Z$ for $m \le
0$.
Let $\Psi(z, h,
f)$ be its Borcherds lifting which is a meromorphic modular form of weight
$c^+(0, 0)/2$ for $\SO(V)$ of level $K$, see \cite{Bo1}.
Then we have
$$
\Phi(z,h, f) =- 2 \log\|\Psi(z, h, f)\|_{Pet}^2,
$$
see \cite[Theorem 13.3]{Bo1}.
Consequently,
\[
c^+(0,0) \hat{c}_1(\hat{\omega})=\hat\calZ(f).
\]
So Conjecture \ref{conjY5.2} says that
\begin{align*}
 \langle \hat{\omega}, \calZ(U) \rangle_{Fal}
=\frac{2}{\vol(K_T)}\kappa(0,0).
\end{align*}
Hence we obtain the following conjecture.

\begin{conjecture}
One has
$$
\frac{1}{\deg Z(U)}\langle \hat{\omega}, \mathcal Z(U)\rangle_{Fal}
=\frac{1}2 \kappa(0, 0).
$$
\end{conjecture}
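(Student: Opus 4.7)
The plan is to deduce the identity from Conjecture \ref{conjY5.2} by comparing the metrized Hodge bundle $\hat{\omega}$ to the arithmetic divisor attached to the Borcherds lift of a weakly holomorphic input. First, I would choose a weakly holomorphic modular form $f \in M^!_{1-n/2,\bar{\rho}_L}$ with integral principal part and non-vanishing constant term $c^+(0,0) \neq 0$. The existence of such an $f$ follows from the surjectivity of the principal-part map on $H_{1-n/2,\bar{\rho}_L}/M_{1-n/2,\bar{\rho}_L}$ (see \cite[Proposition 3.11]{BF}) combined with Lemma \ref{lem:principal} to guarantee that one can arrange a weakly holomorphic representative.

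Second, I would invoke the Borcherds lift $\Psi(z,h,f)$, which is a meromorphic modular form of weight $c^+(0,0)/2$ and level $K$ on $X_K$, and whose Petersson norm satisfies
\[
-2 \log \|\Psi(z,h,f)\|_{Pet}^2 = \Phi(z,h,f)
\]
by \cite[Theorem 13.3]{Bo1}. Viewing $\Psi$ as a rational section of $\omega^{c^+(0,0)/2}$ with integral model, this yields the identity
\[
c^+(0,0)\, \hat{c}_1(\hat{\omega}) = \hat{\calZ}(f)
\]
in $\cha^1(\calX_K)_\C$, since the arithmetic divisor of $\Psi$ equals $\calZ(f)$ (after possibly enlarging $\calZ(f)$ by boundary components). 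Pairing both sides with $\calZ(U)$, one obtains
\[
c^+(0,0)\, \langle \hat{\omega}, \calZ(U)\rangle_{Fal} = \langle \hat{\calZ}(f), \calZ(U)\rangle_{Fal}.
\]

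Third, I would apply Conjecture \ref{conjY5.2} to the right-hand side. Because $f$ is weakly holomorphic, $\xi(f) = 0$, and so $L'(\xi(f), U, 0) = 0$. The conjectural formula \eqref{eq:fh} collapses to
\[
\langle \hat{\calZ}(f), \calZ(U)\rangle_{Fal} = \frac{2\, c^+(0,0)}{\vol(K_T)}\, \kappa(0,0).
\]
Dividing through by $c^+(0,0)$ and using the volume computation $\deg Z(U) = 4/\vol(K_T)$ from the remark following Lemma \ref{lemY1.1} gives
\[
\frac{1}{\deg Z(U)} \langle \hat{\omega}, \calZ(U)\rangle_{Fal} = \frac{\vol(K_T)}{4} \cdot \frac{2}{\vol(K_T)}\,\kappa(0,0) = \frac{1}{2}\kappa(0,0),
\]
as desired.

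The main obstacle is of course that this argument is conditional on Conjecture \ref{conjY5.2}, which asserts a precise identity between finite intersection numbers and Fourier coefficients of $\theta_P \otimes \calE_N$. Unconditionally, the reduction just carried out reveals that the Hodge-bundle conjecture is the ``constant-term'' shadow of Conjecture \ref{conj4.8}; its proof therefore hinges on controlling the arithmetic intersection $\langle \calZ(m,\mu), \calZ(U)\rangle_{fin}$ against the coefficients $r(m_1,\mu_1)\kappa(m_2,\mu_2)$. In the low-dimensional cases $n=0,1,2$ treated later in the paper, one could hope to verify the statement directly by using an explicit integral model of $\calX_K$ (and of $\omega$): for $n=0$ via the moduli stack of CM elliptic curves as in Section~\ref{sect6}; for $n=1$ by pullback to modular curves as in Section~\ref{sect:5}; and for $n=2$ via the Hilbert modular surface setup of Section~\ref{sect:8}. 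The genuinely hard part is the choice of a regular integral model $\calX_K$ and the careful extension of $\omega$ and of $\calZ(U)$ across the boundary, so that the identity $c^+(0,0)\hat{c}_1(\hat{\omega}) = \hat{\calZ}(f)$ holds on the nose rather than up to a rationally trivial boundary correction.
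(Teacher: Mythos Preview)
Your derivation is correct and follows essentially the same route as the paper: both arguments use the Borcherds lift of a weakly holomorphic $f$ with $c^+(0,0)\neq 0$ to identify $c^+(0,0)\,\hat{c}_1(\hat{\omega})$ with $\hat{\calZ}(f)$, apply Conjecture~\ref{conjY5.2} (with $\xi(f)=0$), and then divide through using $\deg Z(U)=4/\vol(K_T)$. The paper likewise presents this only as a conditional deduction, noting afterward that for $n\le 2$ the statement can be matched with the Chowla--Selberg formula and Colmez's conjecture.
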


It is interesting that the right hand side depends only on
$K_T$.  When $K_T \cong \hat{\OO}_D^*$ for a fundamental
discriminant $D <0$,
 $$ \kappa(0, 0)= \log|D| - 2
\frac{\Lambda'(\chi_{D}, 0)}{\Lambda( \chi_{D}, 0)} = 4 h_{Fal}(E)
$$
is four times the Faltings height of an elliptic curve  with complex multiplication  by $\OO_D$. This follows from
the Chowla-Selberg formula as reformulated by Colmez \cite{Co}.
When  $n=1$ and $X_K=Y_0(N)$,
the sections
of $\omega^k$ actually correspond to weight $2k$ modular forms in
the usual sense, and $\mathcal Z(U)$ is the moduli stack of CM
elliptic curves  with CM by $\OO_D$ (see Section \ref{sect7}).
In this case,  the conjecture is simply the  the celebrated
Chowla-Selberg formula just mentioned. When $n=2$, and
 $X_K$ is a Hilbert modular surface,  sections of $\omega^k$ correspond to weight
 $k$ Hilbert modular
 forms, and the left hand side of the conjecture is the Faltings height
 of a  CM abelian surface of the  CM type $(K, \Phi)$ where
 $K=\mathbb Q(\sqrt\Delta, \sqrt D)$ is a biquadratic CM quartic
 field with real quadratic subfield $F=\mathbb Q(\sqrt\Delta)$, and
 $\Phi =\Gal(K/k_D)$ as a CM type of $K$. In this case, the
 conjecture is a special case of Colmez' conjecture and follows
 from the Chowla-Selberg formula (see for example \cite[Proposition 3.3]{YaColmez}).

\section{The $n=0$ case}
\label{sect6}

Here we consider the case $n=0$ where $V$ is negative definite of
dimension $2$. Then $U=V$ and the even Clifford algebra $C^0(V)$ of
$V$ is an imaginary quadratic field $\kay=\mathbb Q(\sqrt D)$. For
simplicity we assume that
$(L,Q)\cong(\fraka,-\frac{\norm}{\norm(\fraka)})$ for a fractional
ideal $\fraka \subset k$ as in the end of Section \ref{sect2}. So
$L'=\partial^{-1} \mathfrak a$. In this case $H=T=\Gspin(V) =
\kay^*$. We take
\begin{equation}
K=K_T=\hat{\OO}_\kay^*,
\end{equation}
which acts on $L'/L$ trivially.  So
$$
X_K=Z(U)= k^* \backslash \{z_U^\pm\} \times k_f^*/\hat{\OO}_k^*  =\{
z_U^\pm\} \times \Cl(k)
$$
is the union of  two copies of the ideal class group $\Cl(k)$ (a
finite collection of points). It has the following integral model
over $\mathbb Z$.

Let $\mathcal C$ be the moduli stack over $\mathbb Z$ representing the
moduli problem which assigns to every scheme $S$ over $\mathbb Z$
the set $\mathcal C(S)$ of the CM elliptic curves $(E, \iota)$ where
$E$ is an elliptic curve over $S$ and $\iota: \OO_k \hookrightarrow
\End_S(E)=:\OO_E$ is an $\OO_k$-action on $E$ such that the main
involution on $\OO_E$ gives the complex conjugation on $k$. Indeed,
let $\mathcal C^+$ be the moduli stack over $\OO_\kay$ defined in
\cite{KRY1}, representing the moduli problem which assigns to
every scheme $S$ over $\OO_\kay$ the set $\mathcal C^+(S)$ of CM
elliptic curves $(E, \iota)$ over $S$ such that the CM action
$\iota: \OO_\kay \hookrightarrow \OO_E$ gives rise to the
structure map $\OO_\kay \rightarrow \mathcal O_S$ on the lie algebra
$\operatorname{Lie}(E)$. Then $\mathcal C$ is the restriction of
coefficients of $\mathcal C^+$ in the sense of Grothendieck, i.e., it
is $\mathcal C^+$ but viewed as a stack over $\mathbb Z$: $\mathcal
C= (\mathcal C^+\rightarrow \Spec (\OO_k) \rightarrow \Spec (\mathbb
Z))$.

\begin{lemma}
One has a bijective map between   $\mathcal C(\mathbb C)$  and
$X_K$.
\end{lemma}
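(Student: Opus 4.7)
The plan is to build the bijection using the classical uniformization of CM elliptic curves together with the description of $\mathcal{C}$ as a restriction of scalars. First I would fix a complex embedding $\sigma_0: k \hookrightarrow \C$, which via the identification $U(\R) = k\otimes_\Q \R \cong \C$ determines one of the two orientations on $U(\R)$, say $z_U^+$. Over this choice, classical CM theory shows that every $(E,\iota)\in\mathcal{C}^+(\C)$ arises as $E_{\frakb} = \C/\sigma_0(\frakb)$ for a fractional ideal $\frakb \subset k$, with $\iota$ given by $\sigma_0$-multiplication; two such curves are isomorphic as CM elliptic curves if and only if $[\frakb] = [\frakb']$ in $\Cl(k)$. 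Hence $\mathcal{C}^+(\C)\cong \Cl(k)$.

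Second, since $\mathcal{C}$ is $\mathcal{C}^+$ regarded as a $\Z$-stack (the restriction of scalars along $\Spec\OO_k\to\Spec\Z$), a $\C$-point of $\mathcal{C}$ consists of a ring homomorphism $\OO_k\to\C$ together with a $\C$-point of $\mathcal{C}^+$ over that structure map. There are exactly two such homomorphisms, $\sigma_0$ and $\bar\sigma_0$, corresponding precisely to the two orientations $z_U^\pm$: an orientation on the negative definite plane $U(\R)$ amounts to choosing one of the two isomorphisms $k\otimes_\Q \R\cong \C$. Combining these two steps yields
\[
\mathcal{C}(\C)\;\cong\;\{z_U^\pm\}\times \Cl(k).
\]

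Third, on the Shimura-variety side, since $H = T = k^*$ and $K = \hat{\OO}_k^*$ acts trivially on the discrete set $\{z_U^\pm\}$, the adelic double coset simplifies to
\[
X_K \;=\; k^*\backslash\bigl(\{z_U^\pm\}\times k_f^*/\hat{\OO}_k^*\bigr)\;=\;\{z_U^\pm\}\times \bigl(k^*\backslash k_f^*/\hat{\OO}_k^*\bigr)\;=\;\{z_U^\pm\}\times \Cl(k),
\]
via the idele-to-ideal map $h\mapsto\prod_p\mathfrak{p}^{v_p(h_p)}$. Matching the two descriptions pointwise produces the desired bijection.

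The main technical nuisance is the bookkeeping of normalizations. One must check that the identification $\GSpin(U) = k^*$ is compatible with the chosen matching of orientations on $U(\R)$ with complex embeddings of $k$, and that on the moduli side isomorphisms $E_\frakb\cong E_{\frakb'}$ correspond exactly to multiplication by elements of $k^*$ on the underlying lattices (so that the $k^*$-action on the adelic side matches the isomorphism relation on the moduli side). Both points are standard but must be tracked carefully in order to make the bijection canonical rather than merely set-theoretic.
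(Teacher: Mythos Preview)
Your proof is correct and follows essentially the same approach as the paper. The paper argues directly: every CM elliptic curve over $\C$ is some $E_{\frakb}=\C/\frakb$, and each $E_{\frakb}$ carries exactly the two $\OO_k$-actions $\iota_+(r)z=rz$ and $\iota_-(r)z=\bar r z$, giving the bijection $(z_U^\pm,[\frakb])\mapsto (E_\frakb,\iota_\pm)$. Your version packages the same content through the restriction-of-coefficients description of $\mathcal C$, reading the two signs as the two ring maps $\OO_k\to\C$; this is just a more structural phrasing of the same two-to-one dichotomy.
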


\begin{proof}
It is well-known that every elliptic curve  with CM by $\OO_k$ over
$\mathbb C$ is isomorphic to $E_\mathfrak a=\mathbb C/\mathfrak a$
for some fractional ideal $\mathfrak a$ of $k$, and that the
isomorphism class of $E_{\mathfrak a}$ depends only on the ideal
class of $\mathfrak a$. On the other hand, $E_\mathfrak a$ has two
$\OO_k$-actions induced by
$$
\iota_+(r) z =rz, \quad \iota_-(r) z=\bar r z,
$$
respectively. So $(z_U^\pm, [\mathfrak a]) \mapsto (E_{\mathfrak a},
\iota_\pm)$ gives a bijection between $X_K$ and $\mathcal C(\mathbb
C)$.
\end{proof}

For $(E, \iota) \in \mathcal C(S)$, let
\begin{equation}
V(E, \iota) = \{ x \in \OO_E;\;  \text{$\iota(\alpha)  x = x
\iota(\bar\alpha)$ for all $\alpha\in \OO_k$, and $\tr x =0$}\}
\end{equation}
be the space of `special endomorphisms' with the definite quadratic
form $\norm(x) :=\deg x = -x^2$. When $S =\Spec (F)$ for an
algebraically closed field $F$, then $V(E, \iota)$ is empty if
$F=\C$ or $F=\bar{\mathbb F}_p$ for a prime $p$ which is split in
$k$. When $p$ is non-split in $k$, then $\calO_E$ is a maximal order
of the unique quaternion algebra $\mathbb B$ which is ramified
exactly at $p$ and $\infty$. In this case $V(E,\iota)$ is a positive
definite lattice of rank $2$ and $\norm(x)$ is the reduced norm  of
$x$.

For $\mu \in L'/L=\partial^{-1}\mathfrak a/\mathfrak a$ and $m \in
\mathbb Q_{>0}$, consider the moduli problem which assigns to every
scheme $S$ (over $\mathbb Z$) the set $\mathcal Z(S)$ of triples
$(E, \iota, \boldbeta)$ where
\begin{itemize}
\item[(i)]
$(E, \iota) \in \mathcal C(S)$, and
\item[(ii)]
$\boldbeta \in V(E, \iota)\partial^{-1} \mathfrak a$ such that
\begin{equation*}
\norm \boldbeta =m \norm \mathfrak a, \quad \mu + \boldbeta \in \OO_E
\mathfrak a.
\end{equation*}
\end{itemize}
It is empty unless $m \in  Q(\mu) +\mathbb Z$.

\begin{lemma}
Let the notation be as above, and assume that $m \in
Q(\mu)+\mathbb Z$. Then the above moduli problem is represented by an algebraic stack $\mathcal Z(m, \mathfrak a, \mu)$ of dimension $0$.
Furthermore, the forgetful map $(E, \iota, \boldbeta) \mapsto (E,
\iota)$ is a finite  \'etale  map from $\mathcal Z(m, \mathfrak a,
\mu)$ into $\mathcal C$.
\end{lemma}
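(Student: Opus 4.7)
The plan is to verify the assertion in three steps: representability, quasi-finiteness of the forgetful map, and étaleness.

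For representability, fix $(E,\iota)\in \calC(S)$. The sheaf $\OO_E=\End_S(E)$ is étale-locally a constant sheaf of finitely generated $\Z$-modules, and $V(E,\iota)$ is cut out of $\OO_E$ by the closed conditions $\tr(x)=0$ and $\iota(\alpha)x=x\iota(\bar\alpha)$ for $\alpha\in \OO_k$. The constraint $\mu+\boldbeta\in \OO_E\fraka$ selects a finite union of cosets inside $V(E,\iota)\partial^{-1}\fraka$, and within each coset the norm equation $\norm(\boldbeta)=m\norm(\fraka)$ defines a closed condition. Hence $\calZ(m,\fraka,\mu)$ is naturally a closed substack of a finite disjoint union of copies of $\calC$, and in particular is an algebraic stack.

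Next I would establish quasi-finiteness by examining geometric fibers. Over $\Spec\C$, or over $\Spec\bar\F_p$ with $p$ split in $k$, one has $V(E,\iota)=0$ (as recalled in the text preceding the lemma) and the fiber is empty. Over $\Spec\bar\F_p$ with $p$ inert or ramified in $k$, the curve $E$ is supersingular, $\OO_E$ is a maximal order in the quaternion algebra ramified at $p$ and $\infty$, and $V(E,\iota)$ is a positive-definite $\Z$-lattice of rank $2$. Positive-definiteness then bounds the number of $\boldbeta$ with $\norm(\boldbeta)=m\norm(\fraka)$, and the finite-norm condition also gives properness of the forgetful map. This simultaneously proves finiteness of fibers and the dimension assertion: the image in $\calC$ is supported at the $0$-dimensional substack of non-split primes of $k$.

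For étaleness, the plan is to invoke deformation theory. The $\OO_k$-action rigidifies deformations of $E$ via Serre--Tate, so $\calC^+$ is étale over $\Spec\OO_k$ and hence $\calC$ is étale over $\Spec\Z$. Given a square-zero thickening $S\hookrightarrow S'$, a triple $(E,\iota,\boldbeta)\in \calZ(m,\fraka,\mu)(S)$, and the unique lift $(\tilde E,\tilde\iota)$ of $(E,\iota)$ to $S'$, one must show that $\boldbeta$ lifts uniquely to $\tilde\boldbeta\in \End_{S'}(\tilde E)$. Via Grothendieck--Messing this becomes a linear-algebra question on the (de Rham or crystalline) Dieudonn\'e module: the anti-commutation $\tilde\iota(\alpha)\tilde\boldbeta=\tilde\boldbeta\tilde\iota(\bar\alpha)$ forces $\tilde\boldbeta$ to interchange the two $\OO_k$-eigenspaces of the Hodge filtration of $\tilde E$, and the CM structure already determines this filtration, so the obstruction to lifting vanishes and the lift is unique.

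The main obstacle is the careful verification of this unique lifting at primes dividing the discriminant of $k$, where both the deformation theory of $(E,\iota)$ and the structure of $V(E,\iota)$ are most delicate. One can either invoke the analogous computation in \cite{KRY1} and extend it to arbitrary $\fraka$ by tracking the congruence condition $\mu+\boldbeta\in \OO_E\fraka$ through Lemma \ref{sublattice}, or work directly via Grothendieck--Messing, using that the $\OO_k\otimes \Z_p$-structure on the $p$-divisible group of $E$ cuts down the lifted filtration uniquely and so rigidifies the deformation of $\boldbeta$.
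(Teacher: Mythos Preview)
The paper's proof takes a different route from yours: it identifies the moduli problem, after a change of variables via an element $\lambda\in\fraka^{-1}/\partial\fraka^{-1}$, with a stack $\calZ^+$ over $\Spec\OO_k$ already constructed in \cite{KY1}, and then takes restriction of coefficients to $\Z$. Representability and the structure of the forgetful map are thus imported wholesale from that reference rather than argued directly.

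Your deformation-theoretic argument for \'etaleness contains a genuine gap, and in fact establishes the opposite of what you want. You correctly note that anti-commutation forces $\boldbeta$ to interchange the two $\OO_k$-eigenlines in the de Rham cohomology, and that the CM structure pins down the Hodge line as one of them. But Grothendieck--Messing says $\boldbeta$ lifts to $(\tilde E,\tilde\iota)$ exactly when it \emph{preserves} the Hodge line; interchanging the eigenlines means it does not (unless $\boldbeta=0$). So the obstruction is nonzero, and $\boldbeta$ lifts only to a finite-length thickening --- this is precisely Gross's quasi-canonical lift computation that underlies \cite{KRY1} and \cite{KY1}. Indeed, the local lengths $i_p(\calZ,x)$ appearing a few lines after the lemma can exceed $1$ (witness the factors $\ord_p(m)+1$ in the proof of Theorem~\ref{theoY2.1}), which is incompatible with the forgetful map being \'etale onto the regular one-dimensional stack $\calC$. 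The word ``\'etale'' in the statement appears to be a slip; what is actually needed, and used, is only that $\calZ(m,\fraka,\mu)$ is a $0$-dimensional closed substack of $\calC$ with finite local lengths. A smaller issue: your representability sketch assumes $\End_S(E)$ is \'etale-locally constant on $\calC$, but it jumps at supersingular fibers, so that step also needs repair.
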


We will view $\mathcal Z(m, \mathfrak a, \mu)$ as a cycle in
$\mathcal C$ by identifying it with its direct image under the
forgetful map. It is supported at finitely many primes which are
non-split in $k$.

\begin{proof}
Consider the similar moduli problem which assigns to
each scheme $S$ over $\OO_k$ the set $\mathcal Z^+(S)$ of the
triples $(E, \iota, \boldbeta)$ where $(E, \iota)\in \mathcal
C^+(S)$ and $\boldbeta$ satisfies the same conditions as above.
Choose a $\lambda \in \mathfrak a^{-1}/\partial \mathfrak a^{-1}$
such that the multiplication by $\lambda$ gives an isomorphism
$$
\partial^{-1}\mathfrak a/\mathfrak a  \cong  \partial^{-1}/\OO_\kay,
\quad x \mapsto \lambda x.
$$
Then $\mathcal Z^+(S)$ consists of the triples $(E, \iota,
\boldbeta)$ where $(E, \iota) \in \mathcal C^+(S)$,
$$
\boldbeta \in V(E, \iota) (\partial \mathfrak a^{-1})^{-1}, \quad
\norm (\partial \mathfrak a^{-1})\norm \boldbeta= m|D|,
$$
and
$$
\lambda \mu + \bar\lambda \boldbeta \in \OO_E.
$$
It is proved in  \cite{KY1} that this moduli problem is represented
by a DM-stack $\mathcal Z^+(m, \mathfrak a, \mu)$ (denoted there by
$\mathcal Z(m|D|,
\partial \mathfrak a^{-1}, \bar\lambda, \lambda \mu)$).
Let $\mathcal Z(m, \mathfrak a, \mu)$ be the restriction of
coefficients of $\mathcal Z^+(m, \mathfrak a, \mu)$, then $\mathcal
Z(m, \mathfrak a, \mu)$ represents the moduli problem $S \mapsto
\mathcal Z(S)$. The forgetful map is clearly a finite  \'etale map.
\end{proof}

Following \cite[Section 2]{KRY2}, we define the arithmetic  degree
of a $0$-dimensional DM-stack $\mathcal Z$ as
\begin{equation}
\widehat{\deg} (\mathcal Z)=\sum_{p} \sum_{x\in \mathcal
Z(\bar{\mathbb F}_p)} \frac{1}{\# \hbox{Aut}(x)} i_p(\mathcal Z, x)
\log p.
\end{equation}
Here $i_p(\mathcal Z, x)$ is defined as follows.
Let $
\tilde{\OO}_{\mathcal Z, x}$ be the strictly local Henselian ring of
$\mathcal Z$ at $x$, then
$$
i_p(x) =\hbox{Length}(\tilde{\OO}_{\mathcal Z, x})
$$
is the length of the local Artin ring $\tilde{\OO}_{\mathcal Z, x}$.
It is well-known that
\begin{equation}
\widehat{\deg} (\mathcal Z)
=\widehat{\deg} (\operatorname{cRes}_{\OO_K/\mathbb Z} \mathcal Z)
\end{equation}
for a DM-stack $\mathcal Z$ over the ring of integers $\OO_K$ of
some number field $K$, where $\operatorname{cRes}_{\OO_K/\mathbb Z}
\mathcal Z$ is the restriction of coefficients of $\mathcal Z$. In
particular, one has
\begin{equation} \label{neweq6.6}
\widehat{\deg}(\mathcal Z(m, \mathfrak a, \mu)) =
\widehat{\deg}(\mathcal Z^+(m,\mathfrak a, \mu)).
\end{equation}

 It is also well-known that
\begin{equation}
\widehat{\deg}(\mathcal Z) =\frac{1}{ [K:\mathbb Q]}
\widehat{\deg}(\mathcal Z\otimes_{\mathbb Z} \OO_K)
\end{equation}
for a DM-stack $\mathcal Z$ over $\mathbb Z$.

\begin{lemma}
\label{lemY6.1}
\label{volkt}
Let $w_\kay=\# \OO_\kay^*$. We have
\[
\frac{1}{\vol(K_T)}= \frac{h_\kay}{w_{k}}= \frac{\sqrt{|D|}}{2\pi} L(
\chi_D, 1).
\]
\end{lemma}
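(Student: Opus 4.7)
The plan is to exploit the exact sequence
\[
1\lo \A_f^* \lo T(\A_f) \lo \SO(U)(\A_f) \lo 1
\]
used to define the Haar measure on $T(\A_f)$, together with a direct count of ideal classes. First I would compute $\vol(T(\Q)\bs T(\A_f))$ in two different ways. On the one hand, using Hilbert 90 (which gives $\Q^*\bs k^* \cong \SO(U)(\Q)$) and the compatible normalization of measures, the exact sequence yields
\[
\vol(T(\Q)\bs T(\A_f)) = \vol(\Q^*\bs \A_f^*)\cdot \vol(\SO(U)(\Q)\bs \SO(U)(\A_f)) = \tfrac{1}{2}\cdot 2 = 1,
\]
by the normalizations fixed just before Lemma~\ref{lemY1.1}.

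On the other hand, by strong approximation for $k^*$ one has a bijection
\[
T(\Q)\bs T(\A_f)/K_T = k^*\bs k_f^*/\hat\calO_k^* \,\cong\, \Cl(k),
\]
and the $T(\Q)$-stabilizer of any coset in $T(\A_f)/K_T$ is $k^*\cap \hat\calO_k^* = \calO_k^*$, a group of order $w_k$. Unfolding the integral of the constant function~$1$ over $T(\Q)\bs T(\A_f)$ (exactly as in the proof of Lemma~\ref{lemY1.1}, applied with $F=1$), one obtains
\[
\vol(T(\Q)\bs T(\A_f)) \;=\; \frac{\vol(K_T)}{w_k}\cdot \#\Cl(k) \;=\; \frac{\vol(K_T)\,h_k}{w_k}.
\]
Combining the two evaluations gives $\vol(K_T) = w_k/h_k$, which is the first equality.

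The second equality is then just the classical Dirichlet class number formula for the imaginary quadratic field $k=\Q(\sqrt D)$, namely
\[
L(\chi_D,1) = \frac{2\pi h_k}{w_k\sqrt{|D|}},
\]
which is also recorded in \eqref{neweq2.33} via $\Lambda(\chi_D,1) = \sqrt{|D|}/\pi\cdot L(\chi_D,1) = 2h_k/w_k$. Rearranging yields $h_k/w_k = \sqrt{|D|}/(2\pi)\cdot L(\chi_D,1)$, finishing the proof. The only mild subtlety is the first step, where one must be careful that Hilbert 90 applies at the level of $\Q$-points and that the normalizations of the Haar measures on the three groups in the exact sequence are indeed compatible with the quotient measure; this is why the two volumes $\vol(\Q^*\bs \A_f^*)=1/2$ and $\vol(\SO(U)(\Q)\bs \SO(U)(\A_f))=2$ have been fixed explicitly in advance.
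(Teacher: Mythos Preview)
Your proof is correct and is essentially identical to the paper's argument: both compute $\vol(T(\Q)\bs T(\A_f))$ first via the exact sequence $1\to \Q^*\bs\A_f^*\to k^*\bs k_f^*\to k^1\bs k_f^1\to 1$ (your Hilbert~90 remark is exactly what makes this sequence exact on rational points) to get the value~$1$, and then by unfolding with $F=1$ to get $\vol(K_T)h_k/w_k$; the second equality is in both cases just~\eqref{neweq2.33}.
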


\begin{proof}
Recall that
 $T=\kay^*$ and $K=\hat{\OO}_\kay^*$ in our case. Hence $w_{K, T}=w_\kay$. Moreover, recall our Haar
 measure choice just before Lemma \ref{lemY1.1}.
Since
$$
1 \rightarrow \mathbb Q^* \backslash \Q_f^* \rightarrow
\kay^*\backslash \kay_f^* \rightarrow \kay^1 \backslash \kay_f^1
\rightarrow 1
$$
is exact, and $\vol (\mathbb Q^* \backslash \mathbb
Q_f^*)=1/2$, we see that
$$\vol(\kay^* \backslash \kay_f^*)=\vol (\mathbb Q^* \backslash \mathbb
Q_f^*) \vol(\kay^1 \backslash \kay_f^1)=1. $$
On the other hand, we have
$$
\int_{\kay^* \backslash \kay_f^*}  d^*x = \int_{\kay^* \backslash
\kay_f^*/\hat{\OO}_\kay^*} \int_{\OO_\kay^* \backslash
\hat{\OO}_\kay^*} d^*x = \frac{h_\kay}{w_\kay}
\vol(\hat{\OO}_\kay^*).
$$
Hence
$
\vol(K_T)=\vol(\hat{\OO}_\kay^*) = \frac{w_\kay}{h_\kay}$,
and the assertion follows from (\ref{neweq2.33}).
\end{proof}

Conjecture \ref{conj4.8} is just the following theorem in this
special case, which is a reformulation of a result in \cite{KY1}.

\begin{theorem}\label{theoY2.1}
Let the notation be  as above and assume that $D$ is odd. Then
$$
\widehat{\deg} (\calZ (m, \mathfrak a, \mu)) =-\frac{2} {\vol(K_T)}
\kappa(m, \mu).
$$
\end{theorem}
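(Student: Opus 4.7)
The plan is to deduce this from the main arithmetic intersection result of Kudla-Yang \cite{KY1} via a careful matching of normalizations, together with the explicit formula for $\kappa(m,\mu)$ from Theorem \ref{thm:kappaexplicit}.

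First I would pass from $\calZ(m,\mathfrak{a},\mu)$ to $\calZ^+(m,\mathfrak{a},\mu)$ using the identity \eqref{neweq6.6}, which reduces the problem to computing the arithmetic degree of a $0$-dimensional DM stack over $\OO_k$. Since $\calZ^+(m,\mathfrak{a},\mu)$ is supported in finite characteristic only at primes which are non-split in $k$, the degree decomposes as
\[
\widehat{\deg}(\calZ^+(m,\mathfrak{a},\mu)) = \sum_{p \text{ non-split}} \widehat{\deg}_p(\calZ^+(m,\mathfrak{a},\mu))\log p,
\]
where $\widehat{\deg}_p$ is the weighted sum of lengths of the strict henselizations at closed points above $p$.

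Next I would invoke the local computations of \cite{KY1}: for an inert prime $p$, the local contribution is $\eta_0(m,\mu)(\ord_p(m)+1)\rho(m|D|/p)$, and for a ramified prime $p\mid D$ it is $\eta_p(m,\mu)(\ord_p(m)+1)\rho(m|D|)$ (after translating via the parameter change $(m,\mathfrak{a},\mu)\leftrightarrow(m|D|,\partial\mathfrak{a}^{-1},\bar\lambda,\lambda\mu)$ used in the proof of the preceding lemma). Summing over all non-split primes yields
\[
\widehat{\deg}(\calZ(m,\mathfrak{a},\mu)) = \eta_0(m,\mu)\!\!\sum_{p \text{ inert}}\!\!(\ord_p(m)+1)\rho(m|D|/p)\log p + \rho(m|D|)\!\!\sum_{p\mid D}\!\!\eta_p(m,\mu)(\ord_p(m)+1)\log p.
\]

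Finally I would compare this with Theorem \ref{thm:kappaexplicit}, which expresses $-\Lambda(\chi_D,1)\kappa(m,\mu)$ by exactly the same right-hand side. Using $\Lambda(\chi_D,1)=\frac{2h_k}{w_k}$ from \eqref{neweq2.33} together with Lemma \ref{volkt}, which gives $\frac{1}{\vol(K_T)}=\frac{h_k}{w_k}$, we obtain
\[
-\frac{2}{\vol(K_T)}\kappa(m,\mu) = -\frac{2h_k}{w_k}\kappa(m,\mu) = -\Lambda(\chi_D,1)\kappa(m,\mu),
\]
which matches $\widehat{\deg}(\calZ(m,\mathfrak{a},\mu))$. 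The main obstacle is bookkeeping: one must ensure that the normalizations in \cite{KY1} (choice of Haar measures on $T(\A_f)$, the convention $Q(x)=-\norm(x)/\norm(\mathfrak{a})$, and the identification of the relevant Whittaker function with the Fourier coefficient of $E_L'(\tau,0;1)$) line up with ours, and in particular that the signs and the factor $2$ emerge correctly. Given the formulas recalled from \cite{KY1}, this is a routine but delicate comparison rather than a genuinely new computation.
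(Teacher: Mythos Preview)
Your high-level strategy matches the paper's: invoke the Kudla--Yang arithmetic degree formula from \cite{KY1}, then compare with the explicit formula for $\kappa(m,\mu)$ in Theorem~\ref{thm:kappaexplicit}, using $\Lambda(\chi_D,1)=2h_k/w_k$ and Lemma~\ref{volkt} to match constants. However, you have misidentified what \cite{KY1} actually furnishes, and in doing so you have skipped the substantive content of the proof.

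The result quoted from \cite{KY1} in the paper is \emph{not} already in the shape of Theorem~\ref{thm:kappaexplicit}. It reads
\[
\widehat{\deg}(\calZ(m,\mathfrak a,\mu))
=2^{o(\mu)}\Bigl[\sum_{p\ \text{inert}}(\ord_p m+1)\,\rho\bigl(m|D|/p,\,[[\mathfrak p_0\partial\bar{\mathfrak a}]]\bigr)\log p
+\sum_{\substack{p\mid D\\ \mu_p=0}}(\ord_p m+1)\,\rho\bigl(m|D|/p,\,[[\mathfrak p_0\mathfrak p^{-1}\partial\bar{\mathfrak a}]]\bigr)\log p\Bigr],
\]
where $\rho(n,[[\mathfrak b]])$ is the \emph{genus-restricted} ideal-counting function and $o(\mu)=\#\{p\mid D:\mu_p=0\}$. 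This is not the same expression as the right-hand side of Theorem~\ref{thm:kappaexplicit}, which involves the unrestricted $\rho(n)$ weighted by the factors $\eta_0(m,\mu)$ and $\eta_p(m,\mu)$. Converting from one to the other is precisely what the paper's proof does: one introduces the genus characters $\xi_l([\mathfrak b])=(D,\norm\mathfrak b)_l$ for $l\mid D$, factors $\rho(n,[[\mathfrak b]])$ locally, and then verifies the identities
\[
2^{o(\mu)}\rho\bigl(m|D|/p,[[\mathfrak p_0\partial\bar{\mathfrak a}]]\bigr)=\eta_0(m,\mu)\,\rho(m|D|/p),
\qquad
2^{o(\mu)}\rho\bigl(m|D|/p,[[\mathfrak p_0\mathfrak p^{-1}\partial\bar{\mathfrak a}]]\bigr)=\eta_p(m,\mu)\,\rho(m|D|),
\]
the first for $p$ inert and the second for $p\mid D$ with $\mu_p=0$. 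This verification requires a case analysis over $l\mid D$ depending on whether $\mu_l=0$ or not, using $-m\norm\mathfrak a\in\mu\bar\mu+\Z_l$ and the defining property of the auxiliary prime $p_0$. It is not mere bookkeeping or a parameter change; it is the part of the argument where genus theory enters and where the dependence on $\mu$ and $\mathfrak a$ is actually unwound. Your proposal should include this step explicitly.
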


\begin{proof}[Sketch of the proof]
For a prime $p$ which is inert or ramified in $\kay$, let $\mathbb B$ be
the unique quaternion algebra over $\mathbb Q$ ramified exactly at
$p$ and $\infty$.  Choose a prime $p_0 \nmid 2 pD$ (depending on
$p$) satisfying
\begin{equation} \label{eqY6.8}
\operatorname{inv}_l \mathbb B = \begin{cases}
  (D, -p_0 p)_l, &\ff  p \hbox{ is inert in } \kay,
  \\
   (D, -p_0)_l,  &\ff p \hbox{ is ramified in }\kay
  \end{cases}
\end{equation}
for every prime $l$. Here $\operatorname{inv}_l \mathbb B =\pm
1$ depends on whether $\mathbb B$ is  a matrix algebra or a
division algebra.

In particular, $p_0=\mathfrak p_0 \bar{\mathfrak p_0}$ is split in
$\kay$.  For an ideal $\mathfrak b$ of $\kay$, let $[\mathfrak b]$
be the ideal class of $\mathbb B$ and $[[\mathfrak b]]$ be its
associated genus, i.e., the set of (fractional) ideals $\alpha
\mathfrak c^2 \mathfrak b$. Moreover, let
\begin{equation}
\rho(n, [[\mathfrak b]]) =\#\{ \mathfrak c \subset \OO_\kay;\;
\mathfrak a \in [[\mathfrak b]],  \quad \norm\mathfrak c =n\}.
\end{equation}
Notice that it is equal to
$$
\rho(n) =\#\{\mathfrak c \subset \OO_\kay; \; \norm\mathfrak c=n\}
$$
if it is non-zero. In \cite{KY1}, Kudla and the second author proved
the following formula:
\begin{align*}
\widehat{\deg} (\mathcal Z(m, \mathfrak a, \mu))
 &=2^{o(\mu)}\left[\sum_{\text{$p$ inert} }(\ord_{p}m +1)
 \rho(m|D|/p, [[\mathfrak p_0\partial \bar{\mathfrak a}]]) \log
 p\right.
 \\
 &\qquad + \left. \sum_{\substack{p|D,\;\mu_p=0}} (\ord_p m+1) \rho(m|D|/p,
 [[\mathfrak p_0 \mathfrak p^{-1} \partial \bar{\mathfrak a}]])
 \right].
\end{align*}
Here we decompose
$$
\partial^{-1}\mathfrak a/\mathfrak a = \oplus_{p|D}
(\partial^{-1}\mathfrak a/\mathfrak a)\otimes  \mathbb Z_p, \quad
\mu =(\mu_p)_{p|D},
$$
and $o(\mu) =\#\{ p|D;\; \mu_p=0\}$.
Comparing this with Theorem
\ref{theoY2.5}, one sees that it suffices to verify that for positive $ m \in
Q(\mu) +\mathbb Z = -\frac{\mu \bar\mu}{\norm\mathfrak a}
+\mathbb Z$ we have
\begin{equation} \label{Yeq1.6}
2^{o(\mu)} \rho(m|D|/p, [[\mathfrak p_0 \partial \bar{\mathfrak
a}]]) = \eta_0(m, \mu) \rho(m|D|/p)
\end{equation}
when $p$ is inert in $\kay$, and
\begin{equation} \label{Yeq1.7}
2^{o(\mu)} \rho(m|D|/p, [[\mathfrak p_0 \partial^{-1}
\partial \bar{\mathfrak a}]]) = \eta_p(m, \mu) \rho(m|D|)
\end{equation}
when $p$ is ramified in $\kay$ and $\mu_p=0$. For $p|D$, let $\xi_p$
be the genus character of $\Cl(\kay)/\Cl(\kay)^2$ given
by
$$
\xi_p([\mathfrak b]) = \chi_p(\norm\mathfrak b) =(D, \norm\mathfrak b)_p.
$$
Then $\mathfrak c \in [[\mathfrak b]]$ if and only if
$\xi_p(\norm\mathfrak c) = \xi_p(\norm\mathfrak b)$ for all $p|D$. So just
as
$$
\rho(n) = \prod_{l <\infty} \rho_l(n),
$$
one has
$$
\rho(n, [[\mathfrak b]])= \prod_{l\nmid D\infty} \rho_l(n)
\prod_{l|D} \rho_l(n, [[\mathfrak b]]),
$$
where $ \rho_l(n, [[\mathfrak b]]) =1$ or $0 $ depending on whether
there is an integral ideal $\mathfrak c $ such that $\norm\mathfrak c=n$
and $\xi_l(n) =\xi_l(\norm\mathfrak b)$, and
$$
\rho_l(n) =\begin{cases}
  1, &\ff l |D,
  \\
  \frac{1+(-1)^{\ord_l n}}2, &\ff \chi_p(l)=-1,
  \\
   \ord_ln +1, &\ff \chi_p(l) =1.
   \end{cases}
   $$

To see (\ref{Yeq1.6}), we may assume that there is an integral ideal
$\mathfrak c$ with $\norm\mathfrak c =m|D|/p$ (otherwise both sides are
zero). For any $l|D$, one has by (\ref{eqY6.8})
\begin{align}
\xi_l (\frac{m|D|}p \norm(\mathfrak p_0 \partial \bar{\mathfrak a}))
&=(D, m pp_0 \norm\mathfrak a)_l \notag
\\
 &=(D, -m \norm\mathfrak a)_l. \label{Yeq1.8}
 \end{align}
 When $\mu_l \ne 0$, $\mu \bar\mu \notin \mathbb Z_l$, and
 $-m \norm\mathfrak a  \in \mu \bar\mu +\mathbb Z_l$. So
 $$
 -m\norm \mathfrak a |D| \in \mu \bar\mu |D|  +\mathbb
 Z_l |D|
 $$
 and  $\mu \bar\mu |D| \in \mathbb Z_l^*$
(note that $l\ne 2$, since $D$ is odd). We may assume that $\mathfrak a$ is prime to $\partial$,
  so $\norm\mathfrak a$ does not interfere here. Hence
 $$
\xi_l (\frac{m|D|}p \norm(\mathfrak p_0 \partial \bar{\mathfrak a}))
=(D, -m \norm\mathfrak a |D|)_l =(D, \mu \bar\mu |D|)_l=1.$$ That is
$\rho_l(m|D|/p, [[\mathfrak p_0
\partial \bar{\mathfrak a}]])=1$ when $\mu_l\ne 0$. When
$\mu_l=0$, (\ref{Yeq1.8}) implies that
$$
\rho_l(m|D|/p,  [[\mathfrak p_0 \partial \bar{\mathfrak a}]])
=\frac{1}{2} (1+ \chi_l(-m \norm\mathfrak a)).
$$
This proves (\ref{Yeq1.6}). The verification of $(\ref{Yeq1.7})$ is
the same plus the fact $\rho(m|D|) =\rho(m|D|/p)$ for
$p|\hbox{gcd}(m|D|, |D|)$.
\end{proof}

Notice that  the $L$-function $L(\xi(f), U,s)$ vanishes identically,
since it is given as the Petersson scalar product of a cusp form and
an Eisenstein series. The lattice $N$ is equal to $L$. So Conjecture
\ref{conjY5.2} is simply the following theorem in our special case.

\begin{theorem}
\label{thm:n=0} Let $f\in H_{1,\bar\rho_L}$ and assume that the
constant term $c^+(0,0)$ of $f$ vanishes. Then
\begin{align*}
\widehat{\deg}(\calZ(f))=-\frac{1}2 \Phi(Z(U),f).
\end{align*}
\end{theorem}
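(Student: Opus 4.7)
The plan is to combine Theorem~\ref{thm:fund} (the Archimedean formula) with Theorem~\ref{theoY2.1} (Conjecture~\ref{conj4.8} in this case): both sides of the desired identity will reduce to the same linear combination of the coefficients $\kappa(m,\mu)$.

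First, I would specialize Theorem~\ref{thm:fund} to the setting $n=0$. Here $U=V$, the orthogonal complement $V_+$ is trivial, so $P=L\cap V_+=0$ and $N=L\cap U=L$. Thus $\theta_P(\tau)=\phi_0$, and the $L$-function $L(\xi(f),U,s)$ vanishes identically since it is (via \eqref{eq:L}) the Petersson product of the weight-$1$ cusp form $\xi(f)\in S_{1,\rho_L}$ against the weight-$1$ Eisenstein series $E_L(\tau,s;1)$. Theorem~\ref{thm:fund} therefore gives
\[
\tfrac{1}{2}\Phi(Z(U),f)=\frac{2}{\vol(K_T)}\,\CT\bigl(\langle f^+(\tau),\,\calE_L(\tau)\rangle\bigr).
\]
Expanding via $f^+=\sum_{\mu,n}c^+(n,\mu)q^n\phi_\mu$ and $\calE_L=\sum_{\mu,m}\kappa(m,\mu)q^m\phi_\mu$, and using that $\kappa(m,\mu)=0$ for $m<0$ and for $m=0$, $\mu\neq 0$, together with the hypothesis $c^+(0,0)=0$ (which kills the $\kappa(0,0)$ contribution), one obtains
\[
\tfrac{1}{2}\Phi(Z(U),f)=\frac{2}{\vol(K_T)}\sum_{\mu\in L'/L}\sum_{m>0}c^+(-m,\mu)\kappa(m,\mu).
\]

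Next, I would expand the left-hand side. By definition $\calZ(f)=\sum_\mu\sum_{m>0}c^+(-m,\mu)\calZ(m,\mu)$, hence by linearity and Theorem~\ref{theoY2.1} (which identifies $\calZ(m,\mu)$ with $\calZ(m,\fraka,\mu)$ and computes $\widehat{\deg}\calZ(m,\mu)=-\frac{2}{\vol(K_T)}\kappa(m,\mu)$),
\[
\widehat{\deg}(\calZ(f))=\sum_\mu\sum_{m>0}c^+(-m,\mu)\widehat{\deg}(\calZ(m,\mu))=-\frac{2}{\vol(K_T)}\sum_\mu\sum_{m>0}c^+(-m,\mu)\kappa(m,\mu),
\]
which matches $-\tfrac{1}{2}\Phi(Z(U),f)$ from the previous display. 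This completes the proof.

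The genuine work has already been done in the two ingredients being assembled. The Archimedean side relies on Theorem~\ref{thm:fund}, whose proof combines the Kudla--Schofer regularization with see-saw identities, the Siegel--Weil formula, and properties of the Maass lowering operator on the Eisenstein series $E_L(\tau,s;1)$. The finite side relies on Theorem~\ref{theoY2.1}, which translates into the arithmetic intersection formulas of \cite{KRY1},\cite{KY1} on the moduli stack $\mathcal{C}$, together with the delicate matching \eqref{Yeq1.6}--\eqref{Yeq1.7} of local representation numbers of quaternionic ideal classes against the explicit expression of $\kappa(m,\mu)$ in Theorem~\ref{theoY2.5}. The main obstacle I can foresee is then not in the assembly itself, but in the restriction to $D$ odd inherited from Theorem~\ref{theoY2.1}; in the even-discriminant case one would need to redo the local computation of arithmetic intersection multiplicities at the ramified primes, which is not carried out here.
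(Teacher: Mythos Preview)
Your proof is correct and follows essentially the same approach as the paper: both specialize Theorem~\ref{thm:fund} to $n=0$ (where $P=0$, $N=L$, and the $L$-function term vanishes as the Petersson pairing of a cusp form with an Eisenstein series), expand the constant term $\CT\langle f^+,\calE_L\rangle$ using $c^+(0,0)=0$, and then match the result against the linear combination of $\kappa(m,\mu)$ coming from Theorem~\ref{theoY2.1}. Your write-up is slightly more explicit about the vanishing steps, and your closing observation about the restriction to odd $D$ inherited from Theorem~\ref{theoY2.1} is accurate.
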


\begin{proof}
Since
$$
\calZ(f) =\sum_{\mu \in L'/L} \sum_{m>0} c^+(-m, \mu) \calZ(m,
\mathfrak a, \mu),
$$
one has by Theorem \ref{theoY2.1} that
$$
\widehat{\deg}(\calZ(f)) =-\frac{2}{\vol(K_T)} \sum_{\mu \in L'/L}
\sum_{m>0} c^+(-m, \mu) \kappa(m,\mu).
$$
On the other hand, Theorem \ref{theoY4.6} asserts in this case
\begin{align*}
 \Phi(Z(U), f)
 &=\frac{4}{\vol(K_T)} \CT\left(\langle f^+(\tau),\,
\calE_L(\tau)\rangle\right)=\frac{4}{\vol(K_T)} \sum_{\mu \in
L'/L} \sum_{m>0} c^+(-m, \mu) \kappa(m,\mu).
\end{align*}
Comparing the two equalities, we obtain the assertion.
\end{proof}

\section{Height pairings on modular curves}
\label{sect7}
\label{sect:5}

Throughout this section we  assume that $(V,Q)$ has signature
$(1,2)$. Then $X_K$ is a modular or a Shimura curve defined over $\Q$.
The Heegner
divisors $Z(m,\mu)$ and the CM cycles are both divisors on $X_K$
(both supported on CM points).
Moreover, the Faltings height pairing is closely related to the
Neron-Tate height pairing.
Here we compute the heights of Heegner divisors
 employing Theorem \ref{thm:fund},
modularity of the generating series of Heegner divisors,
and
multiplicity one for  newforms in $S_{3/2,\rho_L}$.
Another crucial ingredient is the non-vanishing result for
coefficients of weight $2$ Jacobi cusp forms by Bump, Friedberg,
and Hoffstein \cite{BFH}.
This leads to a proof of the Gross-Zagier formula which uses minimal information
on the intersections of Heegner divisors at the finite places.
Moreover, we also prove Conjectures \ref{conj4.8} and \ref{conjY5.2} by pulling back Heegner divisors to the moduli space $\calC$ defined in Section \ref{sect6}.

\subsection{The modular curve $X_0(N)$}
\label{sect:5.1}

In this example we chose $L$ such that $X_K=Y_0(N)$. Then the
compactification of $X_K$ by the cusps is isomorphic to the modular
curve $X_0(N)$. The basic setup is the same as in \cite[Section
2.4]{BO} with the difference that the quadratic form is replaced by its
negative (which is slightly more convenient for the present paper).

Let $N$ be a positive integer. We consider the rational quadratic
space
\begin{align}
\label{defV} V:=\{x\in \Mat_2(\Q);\; \tr(x)=0\}
\end{align}
with the quadratic form $Q(x):=N\det(x)$. The corresponding bilinear
form is given by $(x,y)=-N\tr(xy)$ for $x,y\in V$. The signature of
$V$ is $(1,2)$.
The group $\Gl_2(\Q)$ acts on $V$ by conjugation
\[
\gamma.x=\gamma x\gamma^{-1},\qquad \gamma\in \Gl_2(\Q),
\]
leaving the quadratic form invariant. This induces an isomorphism
$H=\GSpin(V)\cong \Gl_2$. The domain $\D$ can be identified with
$\H\cup \bar\H$ via
\begin{equation}
\label{eqY7.2}
z=x+i y  \mapsto
\R\Re \zxz {z} {-z^2} {1} {-z}+ \R\Im\zxz {z} {-z^2}
{1} {-z} \in \D.
\end{equation}
Under this identification, the action of
$H(\mathbb R)$ on $\mathbb D$ becomes the usual linear fractional
action.

 Let $L$ be the lattice
\begin{align}
\label{latticeN}
 L=\left\{\zxz{b}{-a/N}{c}{-b};\quad a,b,c\in
\Z\right\}.
\end{align}
The dual lattice is given by
\begin{align}
\label{latticeN2}
 L'=\left\{\zxz{b/2N}{-a/N}{c}{-b/2N};\quad
\text{$a,b,c\in \Z$} \right\}.
\end{align}
We frequently identify $\Z/2N\Z$ with $L'/L$ via $r \mapsto
\mu_r=\diag(r/2N, -r/2N)$. Here the quadratic form on $L'/L$ is
identified with the quadratic form $x\mapsto -x^2$ on $\Z/4N\Z$. The
level of $L$ is $4N$.
%
%
For $m\in \Q$ and $\mu\in L'/L$, we define
\[
L_{m,\mu}:=\{ x\in \mu+L;\; Q(x)=m\}.
\]
Notice that $L_{m,\mu}$ is empty unless $Q(\mu)\equiv m \pmod{1}$.

Let $K_p\subset H(\Q_p)$  be the compact open subgroup
\[
K_p=\left\{\abcd\in \Gl_2(\Z_p);\;c\in N \Z_p\right\} ,
\]
and let $K=\prod_{p} K_p\subset H(\A_f)$. Then $K$ takes the lattice
$L$ to itself and acts trivially on the discriminant group $L'/L$.
Since $H(\A_f)=H(\Q) K$, it is easily seen that
$$
\alpha: \Gamma_0(N)\backslash  \mathbb H \rightarrow X_K= H(\Q)
\backslash \mathbb D \times H(\A_f) /K,  \quad \Gamma_0(N) z \mapsto
H(\Q) (z, 1) K
$$
is an isomorphism.


Let $m\in \Q_{>0}$ and let $\mu\in L'/L$ such that $Q(\mu) \equiv m
\pmod{1}$. Then $D:=-4Nm\in \Z$ is a negative discriminant. If $r
\in \mathbb Z$ with $\mu =\mu_r \pmod{L}$, then $D \equiv r^2 \pmod{
4N}$, and
\begin{align}
\label{eq:standard} x= \zxz{\frac{ r}{2N}
}{\frac{1}{N}}{\frac{D-r^2}{4N}}{-\frac{ r}{2N}}\in L_{m,\mu}.
\end{align}
Conversely, for a pair of integers $D<0$ and $r$ with $D\equiv r^2
\pmod{4N}$, let $m =-D/4N$ and $\mu =\mu_r$. Then $m \in Q(\mu)
+\mathbb Z$ is positive.
We will use this correspondence in this section freely
without mentioning it. Moreover, it is easy to check from Lemma
\ref{lemY4.1} that
\begin{equation}
Z(m, \mu) = P_{D, r} +  P_{D, -r}
\end{equation}
where $P_{D, r}$ is the Heegner divisor defined in \cite{GKZ}.

%
For a positive norm vector $x$ as in \eqref{eq:standard} we put
\begin{align}
\label{eq:defu}
V_+&=\Q x,&  U&=V\cap x^\perp , \\
\label{eq:defp} \calP&=  L\cap V_+, & \calN&= L\cap U.
\end{align}
Then $V_+$ is a positive definite line and $U$ is a $2$-dimensional
negative definite subspace in $V$. Here we use $\calN$ instead of
$N$ as in the previous section to avoid confusion with the level
$N$. An easy computation gives
\begin{equation} \label{eqnew7.9}
\mathcal N = \mathbb Z \zxz {1} {0} {-r} {-1} \oplus \mathbb Z \zxz
{0} {1/N} {\frac{r^2-D}{4N}} {0}.
\end{equation}
In particular,
 the determinant of $\calN$ is $-D$.
It is  also easy to check that
\begin{equation}
\calP= \mathbb Z \kzxz {r} {2} {\frac{D-r^2}2} {-r}=\mathbb Z
\frac{2N}{t} x, \quad \calP'= \mathbb Z \frac{t}{D} x.
\end{equation}
with $t =\gcd(r, 2N)$.
We consider the  ideal $\mathfrak n=[N, \frac{r+\sqrt D}2]$ of $\calO_D=\mathbb Z[\frac{D+\sqrt D}2]$. The norm of $\frakn$ is equal to $N$.
We define a quadratic form $Q$ on $\frakn$
via
\begin{equation}
Q(z) = -\frac{z \bar z}{N}=-\frac{\norm(z)}{\norm(\mathfrak n)}.
\end{equation}

\begin{lemma}
\label{lemY2.2}
Assume that $D$ is the fundamental discriminant of $\kay=\mathbb Q(\sqrt D)$. Then the following map  gives an isomorphism of
quadratic lattices:
$$
f: (\frakn, Q) \rightarrow (\calN, Q), \quad x N + y \frac{r+\sqrt
D}2 \mapsto \{x, y\}:= \kzxz {x} {-\frac{y}N} {-r x -y
\frac{r^2-D}{4N}} {-x}.
$$
Moreover, both are equivalent to the integral quadratic form $[-N,
-r, -\frac{r^2-D}{4N}]= -N x^2 -r xy -\frac{r^2-D}{4N} y^2$.
\end{lemma}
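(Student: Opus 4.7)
The proof is essentially a direct calculation, so the plan is to verify three things in sequence: that $f$ is a $\mathbb Z$-linear isomorphism of the underlying abelian groups, that it carries $Q$ on $\frakn$ to $Q$ on $\calN$, and that the resulting form coincides with $[-N,-r,-\tfrac{r^2-D}{4N}]$. The last item doubles as a sanity check on integrality and will follow at once from the hypothesis $D\equiv r^2\pmod{4N}$ (which is what we get from the correspondence between $(m,\mu)$ and $(D,r)$ fixed just after \eqref{eq:standard}), ensuring that $(r^2-D)/(4N)\in\Z$ and that $\frakn=[N,(r+\sqrt D)/2]$ is a genuine ideal of $\OO_D$ of norm $N$.

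For the abelian group isomorphism, I would plug the two $\Z$-basis vectors of $\frakn$ into $f$:
\[
f(N)=\{1,0\}=\kzxz{1}{0}{-r}{-1},\qquad f\!\left(\tfrac{r+\sqrt D}{2}\right)=\{0,1\}=\kzxz{0}{-1/N}{-\tfrac{r^2-D}{4N}}{0}.
\]
These match, up to a sign in the second coordinate, the $\Z$-basis of $\calN$ given in \eqref{eqnew7.9}. Hence $f$ sends a $\Z$-basis of $\frakn$ to a $\Z$-basis of $\calN$, and since $f$ is $\Z$-linear by inspection, it is an isomorphism of lattices.

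For the quadratic forms, set $z=xN+y\tfrac{r+\sqrt D}{2}\in\frakn$. Then $\bar z=xN+y\tfrac{r-\sqrt D}{2}$ and
\[
z\bar z=x^2N^2+xyNr+y^2\tfrac{r^2-D}{4},
\]
so that $Q(z)=-z\bar z/N=-Nx^2-rxy-\tfrac{r^2-D}{4N}y^2$. On the other side, for $\{x,y\}\in\calN$, using $Q=N\det$,
\[
Q(\{x,y\})=N\det\kzxz{x}{-y/N}{-rx-y\tfrac{r^2-D}{4N}}{-x}=-Nx^2-rxy-\tfrac{r^2-D}{4N}y^2,
\]
which is the same form. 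Thus $f$ is an isometry, and both lattices are equivalent to the integral binary quadratic form $[-N,-r,-\tfrac{r^2-D}{4N}]$.

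There is no real obstacle here beyond bookkeeping; the one thing to be careful about is the sign in $f(\tfrac{r+\sqrt D}{2})=-e_2$ (with $e_2$ as in \eqref{eqnew7.9}), which is harmless since both $\pm e_2$ serve equally well as a basis element of $\calN$. The assumption that $D$ is a fundamental discriminant is not used in the calculation itself, but is needed to know that $\OO_D=\Z[\tfrac{D+\sqrt D}{2}]$ is the maximal order so that $\frakn$ is indeed an invertible ideal of norm $N$ and $Q$ on $\frakn$ agrees with the normalization used in Section \ref{sect6}.
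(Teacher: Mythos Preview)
Your proof is correct and follows essentially the same approach as the paper: both arguments compute the quadratic form in coordinates on each side and find it equals $-Nx^2-rxy-\tfrac{r^2-D}{4N}y^2$. You are slightly more explicit than the paper in checking that $f$ carries the $\Z$-basis of $\frakn$ to a $\Z$-basis of $\calN$ (noting the harmless sign $f(\tfrac{r+\sqrt D}{2})=-e_2$), whereas the paper simply invokes \eqref{eqnew7.9} for this step; your remark about where the fundamental discriminant hypothesis is actually used is also a welcome clarification.
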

\begin{proof} Clearly,
$$
Q(x N +y \frac{r+\sqrt D}2) =-\frac{1}N \left( (xN +\frac{yr}2)^2
-\frac{D}4 y^2\right) = -N x^2 -r x y -\frac{r^2-D}4 y^2,
$$
so $(\frakn, Q)$ is equivalent to $[-N, -r, -\frac{r^2-D}4]$. On the
other hand,
by definition we have
$$
Q(\{x, y\}) = N \det \kzxz {x} {-\frac{y}N} {-r x - y
\frac{r^2-D}{4N}} {-x}=-Nx^2 -r xy -\frac{r^2-D}{4N} y^2.
$$
By means of (\ref{eqnew7.9}), one sees then  that  $\calN$ is equivalent to
$[-N, -r, -\frac{r^2-D}4]$, too. This proves the lemma.
\end{proof}

By Lemma \ref{lemY2.2}, we see that $T=\Gspin(U) \cong \kay^*$ with
$\kay=\mathbb Q(\sqrt D)$. It is easily checked that   $K_T\cong
\hat{\OO}_\kay^*$.

\begin{proposition}
\label{cor:zcomp}
Assume that $D$ is a fundamental discriminant
coprime to $N$. Then
\[
Z(U)=Z(m,\mu).
\]
\end{proposition}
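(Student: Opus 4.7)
The strategy is to show that, under the coprimality hypothesis, the $K$-orbit of $x_0$ exhausts $\Omega_m(\A_f)\cap \supp(\phi_\mu)$; the two divisors then agree essentially by tracing through the definitions.

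First I would check that the defining datum for $Z(x_0,1)$ coincides with the datum for $Z(U)$. Since $x_0\in V_+$ spans the positive line orthogonal to $U$, one has $V_{x_0}=U$ and therefore $H_{x_0}=\GSpin(V_{x_0})=\GSpin(U)=T$. The sub-Grassmannian $\D_{x_0}$ defined in \eqref{eq:dx} equals the two-point set $\{z_U^\pm\}$, and $H_{x_0}(\A_f)\cap K=T(\A_f)\cap K=K_T$. Comparing \eqref{eqY4.3} with \eqref{eq:zu}, the map defining $Z(x_0,1)$ is literally the map defining $Z(U)$. Hence it suffices to prove
\[
Z(m,\mu)=Z(x_0,1).
\]
By the definition \eqref{eq:zvarphi} of composite Heegner divisors, together with $\phi_\mu(x_0)=1$ (which holds since $x_0\in \mu+L$), this reduces to showing that $K$ acts transitively on
\[
\supp(\phi_\mu)\cap \Omega_m(\A_f)=\{x\in \mu+\hat L\;:\; Q(x)=m\}.
\]

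The second step is this transitivity, which is a purely local statement at each finite prime $p$, reducing to showing that $K_p$ acts transitively on $\{x\in \mu_p+L_p\;:\;Q(x)=m\}$. I would treat three cases. At primes $p\nmid 2ND$ the lattice $L_p$ is self-dual, $\mu_p=0$, and $K_p=\GL_2(\Z_p)$ acts on $L_p$ as the full stabilizer; transitivity on level sets of $Q$ is a standard consequence of Witt cancellation over $\Z_p$ for a unimodular lattice of rank $3$. At primes $p\mid N$ (so $p\nmid D$) one writes out an arbitrary $x\in \mu_p+L_p$ with $Q(x)=m$ in the explicit coordinates of \eqref{latticeN}, \eqref{latticeN2} and uses the $K_p$-action together with the congruence $D\equiv r^2\pmod{4N}$ and the invertibility of the relevant entries modulo $p$ (which follows from $p\nmid D$) to conjugate it to $x_0$. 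At primes $p\mid D$ (so $p\nmid N$) the lattice $L_p$ is unimodular but the conjugacy class within a fixed coset $\mu_p+L_p$ is more rigid; here the hypothesis that $D$ is a \emph{fundamental} discriminant (so squarefree up to the $2$-adic correction) guarantees that $\mu_p$ determines a unique $K_p$-orbit of norm $m$ vectors, by a direct analysis of the quadratic form over $\Z_p$.

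Once transitivity is established, the single representative $\xi_1=1$ exhausts the decomposition in \eqref{eq:zvarphi}, and the identity $Z(m,\mu)=\phi_\mu(x_0)\,Z(x_0,1)=Z(x_0,1)=Z(U)$ follows. The main obstacle is the local orbit analysis at the primes dividing $N$ and $D$; the coprimality $(D,N)=1$ is precisely what prevents the obstructions from intertwining, and fundamentality of $D$ rules out the extra orbits that would arise if $\mu_p$ were divisible by $p$ for some $p\mid D$.
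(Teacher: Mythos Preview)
Your proposal is correct and follows essentially the same route as the paper's own proof: reduce to the claim that $\Omega_m(\A_f)\cap\supp(\phi_\mu)=Kx$, verify this by a local transitivity argument at each prime, and then read off the equality of cycles from the definitions \eqref{eq:zvarphi} and \eqref{eq:zu}. The paper simply states the claim and omits the local computation, whereas you spell out the identification $Z(x_0,1)=Z(U)$ and sketch the case analysis at $p\nmid 2ND$, $p\mid N$, and $p\mid D$; this is exactly the ``direct computation'' the paper leaves to the reader.
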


\begin{proof}
We claim that under the assumption on $D$ we have
\begin{align}
\label{eq:hyp} \Omega_m(\A_f)\cap \supp(\phi_\mu)=K x.
\end{align}
Then the assertion follows directly from the definitions of the
cycles \eqref{eq:zvarphi} and \eqref{eq:zu}. To prove the claim, we
have to show for all primes $p$ that $\Omega_m(\Q_p)\cap
\supp(\phi_\mu)=K_p x$. This is a direct computation which we omit.
\end{proof}

\subsection{The Shimura lifting and Hecke eigenforms}

Recall from \cite[\S5]{EZ}  that for the lattice $L$ (defined in
Section \ref{sect:5.1}) the space of cusp forms $S_{3/2,\rho_L}$ is
isomorphic to the space $J_{2,N}$ of Jacobi forms of weight $2$ and
index $N$. There is a Hecke theory and a newform theory for
$J_{2,N}$ which give rise to the corresponding notions on
$S_{3/2,\rho_L}$. Let $S^{-}_2(N)$ denote the space of cusp forms of
weight $2$ for $\Gamma_0(N)$ which are invariant under the Fricke
involution. Note that the Hecke $L$-function of any $G\in S^{-}_2(N)
$ satisfies a functional equation with root number $-1$ and
therefore vanishes at the central critical point. According to
\cite{SZ}, the subspace of newforms $J^{new}_{2,N}$ of $J_{2,N}$ is
isomorphic to the subspace of newforms $S^{new,-}_2(N)$ of
$S^{-}_2(N)$ as a module over the Hecke algebra. The isomorphism is
given by the Shimura correspondence.

More precisely, let $m_0\in \Q_{>0}$ and $\mu_0\in L'/L$ such that
$m_0\equiv Q(\mu_0)\pmod{1}$. Assume that $D_0:=-4Nm_0\in \Z$ is a
fundamental discriminant. Let $x\in L_{m_0,\mu_0}$  be as in
(\ref{eq:standard}) and let $U$ be defined by \eqref{eq:defu}.
There is a linear map $\calS_{m_0,\mu_0}:S_{3/2,\rho_L}\to S_2(N)$
defined by
\begin{align} \label{eqY7.12}
g=\sum_\mu \sum_{m>0} b(m,\mu) q^m\phi_\mu \mapsto
\calS_{m_0,\mu_0}(g)= \sum_{n=1}^\infty \sum_{d\mid n}
\left(\frac{D_0}{d}\right) b\left(m_0 \frac{n^2}{d^2},
\mu_0\frac{n}{d} \right) q^n,
\end{align}
see \cite[ Section II.3]{GKZ}, or \cite[Section 2]{Sk}. If we denote
the Fourier coefficients of $\calS_{m_0,\mu_0}(g)$ by $B(n)$, then
we may rewrite the formula for the image as the Dirichlet series
identity
\begin{align}
\label{eq:dirid} L\left(\calS_{m_0,\mu_0}(g), s\right)= \sum_{n>0}
B(n)n^{-s} = L(\chi_{D_0},s)\cdot \sum_{n>0} b\left(m_0 n^2, \mu_0
n\right)
 n^{-s}.
\end{align}
The maps $\calS_{m_0,\mu_0}$ are Hecke-equivariant and there is a
linear combination of them which provides the above isomorphism of
$S_{3/2,\rho_L}^{new}$ and $S^{new,-}_2(N)$. Notice that if $g\in
S^{new}_{3/2,\rho_L}$ is a newform that corresponds to the
normalized newform $G\in S_2^{new,-}(N)$ under the Shimura
correspondence, then
\begin{align}
\label{eq:newform}
 L\left(\calS_{m_0,\mu_0}(g), s\right) =
b\left(m_0,\mu_0\right)\cdot L(G,s).
\end{align}

\begin{lemma}
\label{prop:l1} Let $m_0$, $\mu_0$, $D_0$, $U$ be as above.
If $g\in S_{3/2,\rho_L}$, then
\[
L(g, U, s)= 2^{-s}\left( \pi
m_0\right)^{-(s+1)/2}\Gamma\left(\frac{s+1}{2}\right)
L(\chi_{D_0},s+1)^{-1} L\big(\calS_{m_0,\mu_0}(g),s+1\big).
\]
In particular,
\begin{align*}
L'(g, U, 0) &=\frac{ \sqrt N\vol(K_T)}{\pi} b(m_0, \mu_0) L'(G,
1),
\end{align*}
 if $g \in S^{new}_{3/2, \rho_{L}}$ and $G\in S_2^{new, -}(N)$ are
 further related by (\ref{eq:newform}).
\end{lemma}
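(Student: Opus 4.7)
The plan is to prove the main identity for $L(g,U,s)$ by unfolding the Dirichlet series \eqref{eq:luf} with $n=1$, and then to deduce the derivative formula by evaluating at $s=0$. Since $V_+=\Q x_0$ is one-dimensional with $Q(x_0)=m_0=|D_0|/(4N)$, the lattice $\calP$ has rank $1$ and $\theta_{\calP}$ is a unary theta series. Viewing $g$ as $S_{\calP\oplus\calN}$-valued via Lemma~\ref{sublattice}, only vectors $\nu\in L'\cap V_+$ give nonzero contributions to the inner sum; under the fundamental-discriminant hypothesis on $D_0$ one verifies that $L'\cap V_+=\Z x_0$, so the contributing vectors are precisely $\nu=\pm n x_0$ for $n\ge 1$, with $Q(\nu)=n^2m_0$ and $L'/L$-class $n\mu_0$ (the $n$-fold multiple of $\mu_0$ in the discriminant group).

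Using the symmetry $b(m,\mu)=b(m,-\mu)$ coming from the Weil representation to collapse the $\pm n$ terms, the inner double sum in \eqref{eq:luf} reduces to
\[
\sum_{m>0}\sum_{\mu\in\calP'/\calP}r(m,\mu)\,\overline{b(m,\mu)}\,m^{-(s+1)/2}
= 2\,m_0^{-(s+1)/2}\sum_{n\ge 1}\overline{b(n^2m_0,\,n\mu_0)}\,n^{-(s+1)}.
\]
The Shimura-lift identity \eqref{eq:dirid} rewrites the tail on the right as $L(\chi_{D_0},s+1)^{-1}L(\calS_{m_0,\mu_0}(g),s+1)$, and combining constants $(4\pi)^{-(s+1)/2}\cdot 2 = 2^{-s}\pi^{-(s+1)/2}$ with $m_0^{-(s+1)/2}$ gives the asserted first identity. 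For the derivative at $s=0$, every factor on the right is regular except $L(\calS_{m_0,\mu_0}(g),s+1)$, which vanishes at $s=0$ because $\calS_{m_0,\mu_0}(g)\in S_2^{-}(N)$ has an odd functional equation (consistent with the vanishing of the incoherent $L(g,U,s)$ at $s=0$). Differentiating picks off only $L'(\calS_{m_0,\mu_0}(g),1)$, and \eqref{eq:newform} converts this to $b(m_0,\mu_0)L'(G,1)$ when $g$ is the newform paired with $G$. Substituting $\sqrt{m_0}=\sqrt{|D_0|}/(2\sqrt{N})$ and using Lemma~\ref{lemY6.1} in the form $\sqrt{|D_0|}\,L(\chi_{D_0},1)=2\pi/\vol(K_T)$ then turns all of the remaining constants into $\sqrt{N}\,\vol(K_T)/\pi$, yielding the stated derivative formula.

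The main obstacle is the bookkeeping at the interface between the representations $\rho_L$ and $\rho_{\calP\oplus\calN}$: one must verify that $L'\cap V_+=\Z x_0$ under the fundamental-discriminant hypothesis, and that the $L'/L$-class of $nx_0$ really is the $n$-fold multiple $n\mu_0$ in the discriminant group, so that the Shimura-lift Dirichlet series \eqref{eq:dirid} matches the unfolded sum coefficient by coefficient. A secondary point, which also requires care, is the symmetry $b(m,\mu)=b(m,-\mu)$ used to collapse the $\pm n$ pairs; this follows from the transformation law of $g$ under $\rho_L$, but must be recorded explicitly in this half-integral-weight setting.
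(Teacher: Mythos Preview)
Your approach is correct and essentially identical to the paper's: both start from the unfolded Dirichlet series \eqref{eq:luf}, reduce the sum over $\calP'$ to a sum over $\Z x_0$ via the vanishing of the restricted coefficients outside $L'$, apply the Shimura-lift identity \eqref{eq:dirid}, and then evaluate the derivative using Lemma~\ref{lemY6.1}. One small remark on your flagged ``main obstacle'': the identity $\calP'\cap L'=\Z x_0$ (equivalently $L'\cap V_+=\Z x_0$) does not actually require $D_0$ to be fundamental; it follows from $\gcd(r,2N)\mid D_0$, which is automatic since $D_0\equiv r^2\pmod{4N}$.
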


\begin{proof}
In view of \eqref{eq:luf} we have
\[
L(g, U, s)=\left(4\pi
\right)^{-(s+1)/2}\Gamma\left(\frac{s+1}{2}\right) \sum_{\lambda\in
\calP'} b(Q(\lambda), \lambda)  Q(\lambda)^{-(s+1)/2},
\]
where we view $g$ as a modular form with representation
$\rho_{\calP\oplus \calN}$ via Lemma \ref{sublattice}. Using the fact that
$b(Q(\lambda), \lambda)=0$ for $\lambda \in \calP'$ unless
$\lambda\in  \calP'\cap L'=\Z x $, the assertion follows by a straightforward computation.
\end{proof}

 Let $G\in
S_{2}^{new,-}(N)$ be a normalized newform of weight $2$, and write
$F_G$ for the totally real number field generated by the eigenvalues
of $G$. There is a newform $g\in S_{3/2,\rho_L}^{new}$ mapping to
$G$ under the Shimura correspondence. We normalize $g$ such that all
its coefficients $b(m,\mu)$ are contained in $F_G$.

\begin{lemma}
\label{goodf}
There is a $f \in H_{1/2,\bar\rho_L}$ with Fourier
coefficients $c^\pm(m,\mu)$ such that
\begin{enumerate}
\item[(i)]
$\xi(f)=\| g\|^{-2} g$,
\item[(ii)]
the coefficients of the principal part $P_f$ lie in $F_G$,
\item[(iii)]
the constant term $c^+(0,0)$ vanishes.
\end{enumerate}
\end{lemma}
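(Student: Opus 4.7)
The plan is to construct $f$ in three steps: begin with a Maass-Poincar\'e-type form supplied by Lemma \ref{lem:n11}, apply a Hecke projector onto the $g$-isotypical component to obtain a form with the correct $\xi$-image and $F_G$-rational negative-degree coefficients, and finally subtract a theta series to kill the constant term.

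Since $g \neq 0$, choose $(m_0, \mu_0)$ with $b(m_0, \mu_0) \neq 0$ and set $f^{(0)} = b(m_0, \mu_0)^{-1} f_{m_0, \mu_0}$. Its negative-degree principal-part coefficients lie in $F_G$. By the pairing formula \eqref{pairalt} and the symmetry $b_h(m,\mu) = b_h(m,-\mu)$, for any $h = \sum b_h(n, \mu) q^n \phi_\mu \in S_{3/2, \rho_L}$ we have $\{h, f^{(0)}\} = b(m_0, \mu_0)^{-1} b_h(m_0, \mu_0)$. In particular $\{g, f^{(0)}\} = 1$, which together with $\{g, f^{(0)}\} = (g, \xi(f^{(0)}))_{\mathrm{Pet}}$ shows that the $g$-Hecke-isotypic component of $\xi(f^{(0)})$ equals $\|g\|^{-2} g$.

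Next, by the multiplicity one theorem of Skoruppa-Zagier \cite{SZ} for newforms in $S^{new}_{3/2, \rho_L}$ together with the Hecke-equivariance of $\xi$, the projector $\pi_g$ onto the $g$-eigenline within $S_{3/2, \rho_L}$ can be written as an $F_G$-linear combination of Hecke operators, using that the Hecke eigenvalues of $g$ lie in $F_G$ via the Shimura correspondence. Since these Hecke operators act $\Q$-rationally on Fourier coefficients and commute with $\xi$, setting $f^{(1)} = \pi_g f^{(0)}$ yields $\xi(f^{(1)}) = \|g\|^{-2} g$ while preserving the $F_G$-rationality of all negative-degree principal-part coefficients. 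This establishes (i) and the nontrivial part of (ii).

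Finally, for (iii), we invoke the Serre-Stark basis theorem, according to which $M_{1/2, \bar\rho_L}$ is the span of unary theta series with $\Q$-rational Fourier coefficients, at least one of which has nonzero constant term. Since $M_{1/2, \bar\rho_L} \subset \ker \xi$, subtracting an $F_G$-rational multiple of such a theta series from $f^{(1)}$ kills $c^+(0,0)$ without changing $\xi(f^{(1)})$ or the negative-degree principal-part coefficients. The resulting form $f$ satisfies (i), (ii), and (iii). The main obstacle is the second step, namely producing an $F_G$-rational Hecke projector onto the $g$-eigenline; this rests crucially on multiplicity one for $S^{new}_{3/2, \rho_L}$ \cite{SZ} together with the Hecke-equivariance of $\xi$ established in \cite{BF}.
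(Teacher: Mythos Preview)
Your approach is correct and essentially reproduces the argument behind \cite[Lemma~7.3]{BO}, which is exactly what the paper's proof invokes for parts (i) and (ii); for part (iii) the paper names the specific theta series attached to the rank-one lattice $(\Z,\,x\mapsto Nx^2)$ rather than appealing to Serre--Stark in general, but the mechanism is the same. Your identification of the $F_G$-rational Hecke projector as the crux is accurate; one point you pass over quickly is that multiplicity one in \cite{SZ} is stated for the \emph{newform} subspace, so realizing $\pi_g$ as an $F_G$-combination of Hecke operators on all of $S_{3/2,\rho_L}$ requires also controlling the oldform part, and one should check that the constant term $c^+(0,0)$ of $f^{(1)}$ lands in $F_G$ so that the theta-correction in step three preserves condition (ii). These are exactly the details that \cite{BO} fills in.
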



\begin{proof}
The existence of an
$f\in
H_{1/2,\bar\rho_L}$ satisfying (i) and (ii) follows from Lemma 7.3
in \cite{BO}.
%
%
We may in addition attain (iii) by adding a suitable multiple of the
theta series in $M_{1/2,\bar\rho_L}$ for the lattice $\Z$ with the
quadratic form $x\mapsto Nx^2$.
\end{proof}

\begin{lemma}
\label{lemY7.8}
\label{lem:non-vanishing}
Let $g\in S_{3/2,\rho_L}^{new}$ be a newform with Fourier coefficients
$b(m,\mu)$
as above.  Let $S$ be a finite set of primes including all
those dividing $N$.  There exist infinitely many fundamental
discriminants $D<0$ such that
\begin{itemize}
\item[(i)]
$q$ splits in $\Q(\sqrt{D})$ for all primes $q\in S$,
\item[(ii)]
$b(m,\mu)\neq 0$ for $m=-\frac{D}{4N}$ and any $\mu\in L'/L$
such that $m\equiv Q(\mu)\pmod{1}$.
\end{itemize}
\end{lemma}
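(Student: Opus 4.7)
The plan is to combine the Waldspurger-type formula of \cite{GKZ} relating $|b(m,\mu)|^2$ to a twisted central $L$-value with the non-vanishing theorem of Bump--Friedberg--Hoffstein \cite{BFH}. First, by the Shimura correspondence (Section~\ref{sect:5.1} and \cite{SZ}), the newform $g\in S^{new}_{3/2,\rho_L}$ corresponds to a normalized newform $G\in S_2^{new,-}(N)$, whose Hecke $L$-function has root number $-1$. For any admissible pair $(m,\mu)$ such that $D:=-4Nm$ is a fundamental discriminant coprime to $N$, the formula of Gross--Kohnen--Zagier gives an identity of the shape
\[
|b(m,\mu)|^2 \;=\; c(D)\cdot L(G,\chi_D,1),
\]
with $c(D)\neq 0$ depending only on $D$ (and not on the particular $\mu$ chosen with $m\equiv Q(\mu)\pmod 1$). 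Thus, once a single coefficient is non-zero, all admissible ones are, and the whole question reduces to non-vanishing of a twisted central value.

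Next I would fix the sign question. Because $G$ has root number $-1$, the twist $L(G,\chi_D,s)$ has root number $+1$ precisely when $\chi_D(-N)=-1$ (for fundamental $D<0$ coprime to $N$), and only for such $D$ can the central value be non-zero. The splitting requirement (i) amounts to $\chi_D(q)=1$ for every $q\in S$. Together with the root-number constraint, these are finitely many local conditions at primes in $S\cup\{\infty\}$, and therefore can be encoded by restricting $D$ to a suitable union of arithmetic progressions modulo a fixed integer $M$ depending on $S$ and $N$.

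Now I would invoke the Bump--Friedberg--Hoffstein theorem \cite{BFH}, which (in its quantitative refinements, e.g.\ by Murty--Murty, Iwaniec, or Ono--Skinner) asserts that for a fixed newform $G$ of even weight and squarefree level, the set of fundamental discriminants $D$ in any given admissible arithmetic progression for which $L(G,\chi_D,1)\neq 0$ has positive density, and in particular is infinite. Applied to our progression, this produces infinitely many fundamental discriminants $D<0$ satisfying (i) for which $L(G,\chi_D,1)\neq 0$. By the Waldspurger-type formula quoted above, each such $D$ also yields (ii), completing the proof.

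The main obstacle will be verifying that the Gross--Kohnen--Zagier formula really shows $b(m,\mu)\neq 0$ simultaneously for \emph{every} $\mu\in L'/L$ with $m\equiv Q(\mu)\pmod 1$, rather than for only one canonical representative, since the individual coefficients $b(m,\mu_r)$ for different $r$ with $r^2\equiv D\pmod{4N}$ are a priori distinct. This reduces to checking that the constant of proportionality in the Waldspurger formula is independent of the choice of $\mu$, which is implicit in the $(m_0,\mu_0)$-dependence recorded in \eqref{eqY7.12}--\eqref{eq:newform}: both sides there factor through the same Hecke L-value. Once this independence is granted, the BFH input immediately produces the required infinite family.
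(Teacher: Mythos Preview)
Your approach is essentially the same as the paper's: reduce (ii) to non-vanishing of $L(G,\chi_D,1)$ via the Waldspurger-type formula of \cite{GKZ} (and \cite{Sk}), then invoke \cite{BFH} to produce infinitely many such $D$ satisfying the local splitting constraints in (i). The paper's proof is just a two-line citation of exactly these two ingredients, so you have correctly reconstructed it with more detail.

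Two small remarks. First, your aside restricting to ``squarefree level'' when invoking \cite{BFH} is unnecessary and not assumed in the paper; the non-vanishing results apply for general $N$. Second, your resolution of the ``all $\mu$'' issue via \eqref{eqY7.12}--\eqref{eq:newform} is not quite the right pointer: those identities say $\calS_{m_0,\mu_0}(g)=b(m_0,\mu_0)G$, which does not by itself show that $|b(m_0,\mu_0)|$ is independent of the choice of $\mu_0$. The clean way is to quote the Waldspurger formula in the form used later in the paper (proof of Corollary~\ref{GZ2}),
\[
b(m,\mu)^2=\frac{\|g\|^2}{8\pi\sqrt{N}\,\|G\|^2}\sqrt{|D|}\,L(G,\chi_D,1),
\]
valid for fundamental $D$ coprime to $N$; the right-hand side manifestly depends only on $D$, so non-vanishing of $L(G,\chi_D,1)$ forces $b(m,\mu)\neq 0$ for every admissible $\mu$. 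You also correctly observe that condition~(i) (all $q\mid N$ split in $\Q(\sqrt D)$) together with $D<0$ already forces the twist to have root number $+1$, so no extra sign hypothesis is needed.
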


\begin{proof}
This is a consequence of the non-vanishing theorem for the central critical values of quadratic twists of Hecke $L$-functions proved in \cite{BFH}, together with the Waldspurger type formula for Jacobi forms,
see \cite[Chapter II, Corollary 1]{GKZ} and \cite{Sk}.
\end{proof}
An alternative proof could probably be given by employing the relationship between vector valued modular forms (respectively Jacobi forms) and scalar valued modular forms and using the non-vanishing result proved in \cite{BrHmbg}.

\subsection{The Gross-Zagier Formula}

\label{sect:gz}

Let $\mathcal Y_0(N)$ (respectively $\calX_0(N)$) be the moduli stack
over $\mathbb Z$ of cyclic isogenies of degree $N$ of elliptic
curves (respectively generalized elliptic curves) $\pi: E \rightarrow
E'$ such that $\ker \pi$ meets every irreducible component of each
geometric fiber as in \cite{KM}.
Then ${\calX}_0(N)(\mathbb C) =X_0(N)$. The stack ${\mathcal X}_0(N)$
is a proper flat curve over $\Z$. It is smooth over $\Z[1/N]$ and
regular except at closed supersingular points $\underline{x}$ in
characteristic $p$ dividing $N$ where $\Aut(\underline{x})\neq \{\pm
1\}$ (see \cite[Chapter 3, Proposition 1.4]{GZ}).

Let  $\mathcal Z(m, \mu)$ be the DM-stack representing  the moduli
problem which assigns to  a base scheme $S$ over $\mathbb Z$ the set
of  pairs $(\pi: E \rightarrow E',\iota)$ where
\begin{itemize}
\item[(i)]
$\pi: E \rightarrow E'$ is a cyclic isogeny of two elliptic curves $E$ and
$E'$ over $S$ of degree $N$,
\item[(ii)]
$\iota: \OO_D \hookrightarrow \End(\pi)=\{
\alpha \in \End(E);\;  \pi  \alpha \pi^{-1} \in \End(E')\}
$
is an $\OO_D$ action on $\pi$ such that
$\iota(\mathfrak n) \ker \pi =0$.
\end{itemize}
Here $\mathfrak n=[N, \frac{r+\sqrt D}2]$ is  one ideal  of
$k=\mathbb Q(\sqrt D)$ above $N$ and $\mu_r =\mu$ (recall that
$D=-4Nm$ and $\mu_r =\diag (\frac{r}{2N}, -\frac{r}{2N})$).
Moreover, $\OO_D$ denotes the order of discriminant $D$ in $k$.

 The  forgetful map $(\pi: E \rightarrow E',
\iota) \mapsto (\pi:E \rightarrow E')$ is a finite \'etale map from
$\mathcal Z(m, \mu)$ into $\mathcal Y_0(N)$, which is generically
$2$ to $1$,
and its direct image is the flat closure
of $Z(m, \mu)$ in $\mathcal X_0(N)$.
It does not intersect with the
boundary $\mathcal X_0(N)\bs \mathcal Y_0(N)$, and lies in the
regular locus of $\mathcal X_0(N)$ (see \cite[Lemma 2.2 and Remark
2.3]{Co}).
In
particular, we may use intersection theory for these divisors and
for cuspidal divisors on $\mathcal X_0(N)$ even though $\mathcal
X_0(N)$ is not regular.

Let $f\in H_{1/2,\bar\rho_L}$, and denote the Fourier expansion of
$f$ as in \eqref{deff}. Assume that the principal part of $f$ has
coefficients in $\R$
and that $c^+(0,0)=0$. There is a divisor
$C(f)$ on $X_0(N)$ supported at the cusps such that $\Phi(z,h,f)$ is
a Green function for the divisor
\[
Z^c(f)=Z(f)+C(f)
\]
of degree $0$ on $X_0(N)$. Let $\calZ^c(f)$ be the flat  closure of
$Z^c(f)$  in $\mathcal X_0(N)$.
We write $\hat{\calZ}^c(f)$ for the
arithmetic divisor given by the pair
\[
\big(\calZ^c(f), \Phi(\cdot,f)\big)\in \cha^1(\mathcal
X_0(N))_\R.
\]
For $m\in \Q_{>0}$ and $\mu\in L'/L$
we define
\begin{align}
y(m,\mu)=Z(m,\mu)-\frac{\deg Z(m,\mu)}{2}( (\infty)+(0)).
\end{align}
This divisor has degree $0$ and is invariant under the Fricke
involution. Moreover, using the principal part of the weak Maass
form $f$, we put
\begin{align}
y(f)=\sum_{\mu\in L'/L}\sum_{m>0} c^+ (-m,\mu) y(m,\mu).
\end{align}
We let $\mathcal Y(m, \mu)$ and $\mathcal Y(f)$ denote their
flat closures in $\mathcal X_0(N)$. Note that for primes $p$ not dividing the discriminant $D=-4Nm$, the divisor $\mathcal Y(m,\mu)$ has zero intersection with every fibral component of $\calX_0(N)$ over $\F_p$, see e.g.~\cite[Chapter IV.4, Proposition 1]{GKZ}.

Let
$J=J_0(N)$ be the Jacobian of $X_0(N)$, and let $J(F)$ denote its
points over any number field $F$. They correspond to divisor classes
of degree zero on $X_0(N)$ which are rational over $F$.
Note that  $y(f)$ is a divisor
of degree $0$ which differs from $Z^c(f)$ by a divisor of degree
zero on $X_0(N)$ which is supported at the cusps. By the
Manin-Drinfeld theorem, $Z^c(f)$ and $y(f)$ define the same point in
the Mordell-Weil space $J(\Q)\otimes_\Z\C$.

We now fix some notation for the rest of this subsection.
Let $G\in S_2^{new,-}(N)$ be a normalized newform defined over the
number field $F_G$. Let $g\in S_{3/2,\rho_L}^{new}$ be a cusp form
corresponding to $G$ under the Shimura correspondence with
coefficients $b(m,\mu)\in F_G$.  Let $f\in H_{1/2,\bar\rho_L}$ be a harmonic
weak Maass form as in
Lemma \ref{goodf}.

We now consider the generating series
\begin{equation}
A(\tau) =
\sum_{\mu\in L'/L} \sum_{m>0
} y(m,\mu)q^m\phi_\mu.
\end{equation}
By the Gross-Kohnen-Zagier theorem, $A(\tau)$ is a modular form with
values in $J(\Q)\otimes_\Z\C$. Borcherds gave a different proof for this
result using Borcherds products associated to weakly holomorphic
modular forms in $M_{1/2,\bar\rho_L}^!$, see \cite{Bo2}.

We may look at
the projection $A^G(\tau)$  of $A(\tau)$ to the $G$-isotypical
component of $J(\Q)\otimes_\Z\C$. So the coefficients of $A^G(\tau)$
are the projections $y^G(m,\mu)$ of the Heegner divisors $y(m,\mu)$
to the $G$-isotypical component.
\cite[Theorem 7.7]{BO} describes this generating series as follows.

\begin{theorem}
\label{thm:isogen}
Let $f$, $g$, and $G$ be as above.
We have
the identity
\[
A^G(\tau)= g(\tau)\otimes y(f)\in S_{3/2,\rho_L}\otimes_\Z J(\Q).
\]
In particular, the divisor $y(f)$ lies in the $G$-isotypical
component of  $J(\Q)\otimes_\Z\C$.
\end{theorem}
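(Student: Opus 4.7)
The strategy I would follow is to combine the modularity of the generating series of Heegner divisors with multiplicity one for newforms in $S_{3/2,\rho_L}$ and to extract $y(f)$ by pairing against $f$.

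By the Gross--Kohnen--Zagier theorem in Borcherds' formulation \cite{Bo2}, $A(\tau)$ is a cusp form of weight $3/2$ with representation $\rho_L$ taking values in $J(\Q)\otimes_\Z\C$. The Hecke algebra acts compatibly on $S_{3/2,\rho_L}$ and on $J(\Q)\otimes_\Z\C$, so $A$ decomposes into isotypical components under this action. By multiplicity one for newforms in $S_{3/2,\rho_L}$ \cite{SZ}, the $G$-isotypical component of $S_{3/2,\rho_L}^{new}$ is one-dimensional and spanned by $g$. The Hecke-equivariance of the assignment $(m,\mu)\mapsto y(m,\mu)$ then forces the existence of an element $P\in J(\Q)\otimes_\Z\C$ lying in the $G$-isotypical part such that
\[
A^G(\tau) = g(\tau)\otimes P.
\]

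Next, I would extend the Petersson-type pairing $\{\cdot,\cdot\}$ between $H_{1/2,\bar\rho_L}$ and $S_{3/2,\rho_L}$ componentwise to cusp forms taking values in $J(\Q)\otimes_\Z\C$. Formula \eqref{pairalt} applied coefficient-wise to $A$ gives
\[
\{A,f\} \;=\; \sum_{\mu\in L'/L}\sum_{m>0} c^+(-m,\mu)\, y(m,\mu) \;=\; y(f).
\]
On the other hand, write $A = A^G + A^\perp$, where $A^\perp$ is the complement of $A^G$ in the orthogonal decomposition coming from newforms attached to other Hecke eigensystems together with the oldform space. Since $\xi(f) = \|g\|^{-2}g$ is a newform corresponding to $G$, orthogonality of distinct Hecke eigenforms (newforms against newforms with different eigensystems, and newforms against oldforms) implies $(g',\xi(f))_{Pet}=0$ for every Hecke eigenform $g'$ orthogonal to $g$. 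Hence $\{A^\perp,f\}=0$ and
\[
y(f) \;=\; \{A^G,f\} \;=\; \{g,f\}\cdot P \;=\; \bigl(g,\|g\|^{-2}g\bigr)_{Pet}\cdot P \;=\; P.
\]
This proves both that $y(f)$ lies in the $G$-isotypical component of $J(\Q)\otimes_\Z\C$ and the identity $A^G(\tau) = g(\tau)\otimes y(f)$.

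The main point to verify carefully is that the Hecke decomposition of $A(\tau)$ indeed places $A^G$ in the newform direction, so that multiplicity one applies and produces the clean factorization $g(\tau)\otimes P$. Once that is in place, the rest reduces to the formal properties of the pairing $\{\cdot,\cdot\}$ and the orthogonality of distinct Hecke eigenforms in $S_{3/2,\rho_L}$, together with the key input $\xi(f)=\|g\|^{-2}g$ provided by Lemma \ref{goodf}.
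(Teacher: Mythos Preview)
Your proposal is correct and follows essentially the same route as the paper. The paper does not give a self-contained proof here; it cites \cite[Theorem~7.7]{BO} and summarizes the argument as resting on ``a comparison of the action of the Hecke algebra on the Jacobian and on harmonic weak Maass forms, and on multiplicity one for the space $S_{3/2,\rho_L}^{new}$.'' Your outline---modularity of $A(\tau)$ via Borcherds' approach to GKZ, Hecke-equivariance to force $A^G=g\otimes P$ by multiplicity one, and then the pairing $\{A,f\}=y(f)$ together with $\{g,f\}=(g,\|g\|^{-2}g)_{Pet}=1$ to identify $P=y(f)$---is exactly this strategy spelled out, and you correctly flag the Hecke-equivariance of $A$ (equivalently, that projecting the coefficients to $J(\Q)^G$ agrees with projecting in the $\tau$-variable to the $g$-line) as the substantive point requiring verification.
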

The proof is based on a comparison of the action of the Hecke
algebra on the Jacobian and on harmonic weak Maass forms, and on
multiplicity one for the space $ S_{3/2,\rho_L}^{new}$.


\begin{theorem} \label{theoY7.8}
\label{GZ1} Let $G$ be a normalized  cuspidal new form of weight
$2$, level $N$ whose $L$-function has an odd functional equation.
Let
$f$ and $g$ be associated to $G$ as above.
Then  the Neron-Tate height of $y(f)$ is given by
\[
\langle y(f), y(f)\rangle_{NT}=  \frac{2\sqrt{N}}{\pi\|g\|^2}
L'\big(G,1).
\]
\end{theorem}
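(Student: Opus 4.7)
The plan is to compute $\langle y(f),y(f)\rangle_{NT}$ by pairing $y(f)$ against a single auxiliary Heegner divisor $y(m_0,\mu_0)$. By Theorem~\ref{thm:isogen} the $G$-isotypical component of the generating series of Heegner divisors equals $g(\tau)\otimes y(f)$, so the projection of $y(m_0,\mu_0)$ onto that component is $b(m_0,\mu_0)\,y(f)$. Since $y(f)$ already lies in the $G$-component this yields
\begin{equation}\label{eq:GZredux}
\langle y(f),\,y(m_0,\mu_0)\rangle_{NT}=b(m_0,\mu_0)\,\langle y(f),y(f)\rangle_{NT},
\end{equation}
so it will be enough to evaluate the left-hand side for a single $(m_0,\mu_0)$ with $b(m_0,\mu_0)\neq 0$.

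Next, by Lemma~\ref{lemY7.8} I would pick a negative fundamental discriminant $D_0=-4Nm_0$ and a class $\mu_0\in L'/L$ such that $b(m_0,\mu_0)\neq 0$ and such that every rational prime dividing $N$, or dividing $4Nm$ for some index $(m,\mu)$ with $c^+(-m,\mu)\neq 0$, splits in $k=\Q(\sqrt{D_0})$. Proposition~\ref{cor:zcomp} then identifies the CM cycle $Z(U)$ attached to the vector in \eqref{eq:standard} with $Z(m_0,\mu_0)$. Two crucial vanishings follow from the splitting hypothesis. First, by Theorem~\ref{theoY2.5} the coefficients $\kappa(m,\mu)$ appearing in $\calE_N$ are supported at primes inert or ramified in $k$, so that
\[
\CT(\langle f^+,\theta_P\otimes\calE_N\rangle)=0.
\]
Second, the finite intersection $\langle\calZ^c(f),\mathcal Y(m_0,\mu_0)\rangle_{fin}$ on $\mathcal X_0(N)$ vanishes: on the generic fiber the two cycles have disjoint supports (distinct CM orders), and at each prime $p$ that occurs in the support of $\calZ(f)$ the reductions of the points of $\mathcal Y(m_0,\mu_0)$ are ordinary (since $p$ splits in $k$), whereas any coincidence with the support of $\calZ^c(f)$ would force supersingular reduction; the cuspidal divisor $C(f)$ is arranged precisely so that $\Phi(\cdot,f)$ is a Green function for the degree-zero divisor $Z^c(f)$, and therefore contributes nothing away from the cusps.

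Feeding both vanishings into Theorem~\ref{theoY4.6}, and using $\xi(f)=\|g\|^{-2}g$, one obtains
\[
\tfrac12\Phi(Z(U),f)=\frac{2}{\vol(K_T)\,\|g\|^2}\,L'(g,U,0).
\]
Combining Lemma~\ref{prop:l1} with Lemma~\ref{lemY6.1} gives $L'(g,U,0)=\frac{\sqrt N\,\vol(K_T)}{\pi}\,b(m_0,\mu_0)\,L'(G,1)$, so
\[
\tfrac12\Phi(Z(U),f)=\frac{2\sqrt N}{\pi\,\|g\|^2}\,b(m_0,\mu_0)\,L'(G,1).
\]
The Hriljac--Faltings identification of Faltings heights of arithmetic divisors on $\mathcal X_0(N)$ with the Neron--Tate pairing on $J_0(N)$ then rewrites the left-hand side as $\langle y(f),y(m_0,\mu_0)\rangle_{NT}$, since the finite and boundary contributions have been killed. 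Substituting into \eqref{eq:GZredux} and dividing by $b(m_0,\mu_0)\neq 0$ finishes the proof.

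The hard part will be the rigorous verification of the finite-intersection vanishing: one must handle the non-regular supersingular closed points of $\mathcal X_0(N)$ where $\#\mathrm{Aut}\neq 2$, check that the cuspidal corrections in $C(f)$ really cancel the boundary contributions at $\infty$ and $0$, and make precise the variant of Hriljac--Faltings that relates $\langle\hat x,y\rangle_{Fal}$ to $\langle x,y\rangle_{NT}$ on this (possibly non-regular) arithmetic surface. Everything else---modularity from Theorem~\ref{thm:isogen}, the CM value formula from Theorem~\ref{theoY4.6}, the $L$-function factorisation from Lemma~\ref{prop:l1}, and the non-vanishing of Jacobi coefficients from Lemma~\ref{lemY7.8}---enters mechanically once that bookkeeping is complete; in particular, no direct computation of finite intersections of Heegner divisors at supersingular primes is required.
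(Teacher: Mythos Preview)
Your overall strategy---reduce to $\langle y(f),y(m_0,\mu_0)\rangle_{NT}$ via Theorem~\ref{thm:isogen}, rewrite as a Faltings height, apply Theorem~\ref{theoY4.6}, and use Lemma~\ref{prop:l1}---is exactly the paper's. The gap is in your two vanishing claims, which are both false as stated.

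The assertion that $\CT(\langle f^+,\theta_P\otimes\calE_{\calN}\rangle)=0$ does not follow from the splitting hypothesis. Theorem~\ref{theoY2.5} says each $\kappa(m,\nu)$ is a $\Q$-linear combination of $\log p$ over primes $p$ non-split in $k_0=\Q(\sqrt{D_0})$; but there are infinitely many such primes, and the constant term is a genuine linear combination of these logarithms with no reason to cancel. Your splitting condition only constrains the primes dividing $N$ and the discriminants $-4Nm$ of the Heegner divisors in $Z(f)$---it says nothing about the infinitely many other primes inert or ramified in $k_0$. The same objection applies to the finite intersection $\langle\calZ(f),\calZ(m_0,\mu_0)\rangle_{fin}$: two CM divisors meet at a prime $p$ whenever $p$ is non-split in \emph{both} CM fields, and this can happen at primes having nothing to do with the discriminants you have controlled. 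A single choice of $D_0$ cannot kill contributions at all primes simultaneously.

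The paper's remedy is to take \emph{two} pairs $(m_0,\mu_0)$, $(m_1,\mu_1)$ and form the combination $Z=d(m_1,\mu_1)Z(m_0,\mu_0)-d(m_0,\mu_0)Z(m_1,\mu_1)$, which is degree zero and supported away from the cusps (this also disposes of the cuspidal bookkeeping you flagged). One then obtains
\[
\langle y(f),y(f)\rangle_{NT}=\frac{2\sqrt N}{\pi\|g\|^2}L'(G,1)+\sum_p\beta_p\log p,
\]
where the left side, and hence each $\beta_p$, is \emph{independent} of the auxiliary choices. Now fix a prime $p$ and choose $D_0,D_1$ so that $p$ splits in both $k_0$ and $k_1$ (Lemma~\ref{lemY7.8}); for \emph{that} computation the $\log p$ contribution to both the $\CT$ term and the finite intersection vanishes, forcing $\beta_p=0$. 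Varying $p$ kills every term. The argument also needs the coefficient $c=d(m_1,\mu_1)b(m_0,\mu_0)-d(m_0,\mu_0)b(m_1,\mu_1)$ to be nonzero, which the paper arranges using Siegel's lower bound for $d(m_1,\mu_1)$ against the Iwaniec--Duke subconvexity bound for $b(m_1,\mu_1)$.
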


\begin{proof}
According to Theorem
\ref{thm:isogen}, we have $b(m,\mu) y (f)=y^G(m,\mu)$, and therefore
\begin{align*}
\langle y(f), y(f) \rangle_{NT} b(m,\mu)
&=\langle y(f),
y(m,\mu)\rangle_{NT}\\
&=\langle Z^c(f),
y(m,\mu)\rangle_{NT}
\end{align*}
for all $(m, \mu)$. Here we have also used the Manin-Drinfeld theorem.
Set $d(m, \mu)=\deg Z(m, \mu)$. For two pairs $(m_0,\mu_0)$ and $(m_1,\mu_1)$ which we will specify appropriately later, we put
$$
c=c(m_0, m_1, \mu_0, \mu_1) = d(m_1, \mu_1) b(m_0, \mu_0) - d(m_0,
\mu_0) b(m_1, \mu_1).
$$
We consider the degree zero divisor
\begin{align*}
Z&=
d(m_1, \mu_1) y(m_0, \mu_0) -d(m_0, \mu_0) y(m_1,
\mu_1)\\
& =d(m_1, \mu_1) Z(m_0, \mu_0) -d(m_0, \mu_0) Z(m_1, \mu_1).
\end{align*}
%
on $X_0(N)$. This divisor is supported outside the cusps.
Let $M$ be the least common multiple of the discriminants of the Heegner divisors in the support of $Z(f)$.
We assume that  $D_i=-4Nm_i$ is coprime to $MN$. This implies that $Z^c(f)$ and $Z$ are relatively prime. Moreover, it implies
that for every prime $p$, the divisor $\calZ^c(f)$ or the flat closure of $Z$ has zero intersection with every fibral component of $\calX_0(N)$ over $\F_p$.
By means of
\cite[Section~3]{Gr}, we find
\begin{align*}
c \langle y(f) , y(f) \rangle_{NT}
 &=\langle Z^c(f), d(m_1, \mu_1) Z(m_0, \mu_0)-d(m_0, \mu_0)  Z(m_1, \mu_1)\rangle_{NT}
   \\
   &=d(m_1, \mu_1)\langle \hat{\mathcal Z}^c(f), \calZ(m_0, \mu_0)\rangle_{Fal}
   -d(m_0, \mu_0) \langle \hat{\mathcal Z}^c(f), \calZ(m_1, \mu_1)
   \rangle_{Fal}.
\end{align*}
Notice that   $\mathcal C(f)$,   the flat closure in $\mathcal X_0(N)$ of the cuspidal
part $C(f)$,   lies in the cuspidal part of
$\mathcal X_0(N)$ and thus does not intersect with $\calZ(m, \mu)$.
One has by Theorem \ref{theoY4.6}, and Lemma \ref{prop:l1}:
\begin{align}
\label{eqY7.18}
&\langle \hat{\mathcal Z}^c(f), \calZ(m_0, \mu_0)\rangle_{Fal}
\\
&=\frac{1}{2} \Phi(Z(m_0, \mu_0), f)
  + \langle \calZ(f), \calZ(m_0, \mu_0)\rangle_{fin} +\langle \mathcal C(f),
  \calZ(m_0, \mu_0)\rangle_{fin} \notag
  \\
  &=\frac{2}{\vol(K_{T_0})}  L'( \xi(f),U_0, 0)
   + \frac{2}{\vol(K_{T_0})} \CT\langle f^+, \theta_{\calP_0}\otimes
   \calE_{\calN_0}\rangle  + \langle \calZ(f), \calZ(m_0,
   \mu_0)\rangle_{fin} \notag
   \\
   &=\frac{2\sqrt N}{\pi \|g\|^2} b(m_0, \mu_0) L'(G, 1)+\frac{2}{\vol(K_{T_0})} \CT\langle f^+, \theta_{\calP_0}\otimes
   \calE_{\calN_0}\rangle  + \langle \calZ(f), \calZ(m_0,
   \mu_0)\rangle_{fin}.  \notag
  \end{align}
Here  the subscript $0$ in $\calP_0$, $\calN_0$, $U_0$, and $T_0$ indicates
its relation to $D_0$.
So we see that
$$
c \langle y(f), y(f) \rangle_{NT} =c\frac{2\sqrt N}{\pi \|g\|^2}
L'(G, 1) + \sum_{\text{$p$ prime}} \alpha_p \log p
$$
with coefficients $\alpha_p \in F_G$.

We claim that we can choose $D_0$ and $D_1$ such that $c\neq 0$.
In fact, according to Lemma~\ref{lemY7.8}, we may fix a pair $(m_0,\mu_0)$ such that $D_0$ is coprime to $MN$ and such that $b(m_0,\mu_0)\neq 0$.
We let $(m_1,\mu_1)$ run through the pairs such that $D_1$ is is a square modulo $4N$ and coprime to $MN$.
By Siegel's lower bound for the class numbers we have for any $\eps>0$ that
\[
d(m_1,\mu_1)\gg_\eps m_1^{1/2-\eps}, \quad m_1\to \infty.
\]
On the other hand, by Iwaniec's bound for the coefficients of half integral weight modular forms as refined by Duke \cite{Iw}, \cite{Du}, we have
\[
b(m_1,\mu_1)\ll_\eps m_1^{1/2-1/28+\eps}, \quad m_1\to \infty.
\]
This implies that $c\neq 0 $ for $m_1$ sufficiently large.
Hence we find hat
\begin{equation}
\label{eqY7.19}
\langle y(f), y(f) \rangle_{NT} =\frac{2\sqrt N}{\pi \|g\|^2} L'(G,
1) + \sum_{p} \beta_p \log p
\end{equation}
for some coefficients $\beta_p \in F_G$ independent of all choices that we made above.

Now
we prove that $\beta_p=0$ for every $p$. Let $p$ be any fixed prime.
According to Lemma~\ref{lemY7.8}, we may fix a pair $(m_0,\mu_0)$ such that $D_0$ is coprime to $MN$, $p$ splits in $\Q(\sqrt{D_0})$, and such that $b(m_0,\mu_0)\neq 0$.
We let $(m_1,\mu_1)$ run through the pairs such that $D_1$ is is a square modulo $4N$ coprime to $MN$, and such that $p$ splits in $\Q(\sqrt{D_1})$.
As above, we have $c\neq 0$ when $m_1$ is sufficiently large.
In view of (\ref{eqY7.18}) and
(\ref{eqY7.19}), one has
$$
\beta_p =\frac{2d(m_1, \mu_1)}{c\vol(K_{T_0})} a_{0, p} - \frac{2d(m_0,
\mu_0)}{c\vol(K_{T_1})} a_{1, p}+\frac{d(m_1, \mu_1)}c b_{0, p} -
\frac{d(m_0, \mu_0)}c b_{1, p}.
$$
Here we write
$$
\CT\langle f^+, \theta_{\calP_i}\otimes
\calE_{\calN_i}\rangle =\sum_{\text{$q$ prime}} a_{i, q} \log q
$$
by Theorem \ref{theoY2.5}, and
$$
\langle \calZ(f), \calZ(m_i, \mu_i) \rangle_{fin} =\sum_{\text{$q$ prime}} b_{i, q}
\log q
$$
by definition. By Theorem \ref{theoY2.5}, one sees immediately that
$a_{i,p}=0$ since $p$ is split in $\kay_i=\mathbb Q(\sqrt{D_i})$. On
the other hand, if $x=(\pi: E \rightarrow E', \iota) \in \mathcal
Z(m_0, \mu_0)(\bar{\mathbb F}_p)$,
then $E$ and $E'$ are ordinary since $p$ is split in $\kay_0$. This
means that $\iota$ is an isomorphism. So  there is no action of
$\OO_{D}$ on $E$ if $D/D_0$ is not a square. This implies
$$
\langle \calZ(m, \mu), \calZ(m_0, \mu_0) \rangle_p=0
$$
if $m/m_0=D/D_0$ is not a square. Consequently,
$$
\langle \calZ(f), \calZ(m_0, \mu_0)\rangle_p=0,
$$
that is, $b_{0, p}=0$.
For the same reason, $b_{1, p}=0$ and thus $\beta_p=0$. This proves
the theorem.
\end{proof}

\begin{corollary} (Gross-Zagier formula \cite[Theorem I.6.3]{GZ})
\label{GZ2} For any  any $\mu\in L'/L$ and any positive $m\in Q(\mu)+\Z$ we have
\[
\langle y^G(m,\mu),y^G(m,\mu) \rangle_{NT} = \frac{\sqrt{|D|
}}{4\pi^2\|G\|^2} L(G,\chi_D,1) L'\big(G,1).
\]
Here $D=-4Nm$ and $\|G\|$ denotes the Petersson norm of $G$.
\end{corollary}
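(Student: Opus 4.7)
The plan is to combine Theorem \ref{GZ1} with Theorem \ref{thm:isogen} and the Waldspurger-type formula of Gross--Kohnen--Zagier for the Fourier coefficients of the half-integral weight newform $g$.

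First, by Theorem \ref{thm:isogen} we have the factorization $y^G(m,\mu) = b(m,\mu)\, y(f)$ in $J(\Q)\otimes_{\Z}\C$. Taking Neron--Tate heights and using bilinearity gives
\[
\langle y^G(m,\mu),\, y^G(m,\mu)\rangle_{NT} = |b(m,\mu)|^2\, \langle y(f),\, y(f)\rangle_{NT}.
\]
Here the coefficients $b(m,\mu)$ are real (they lie in $F_G\subset \R$), so $|b(m,\mu)|^2 = b(m,\mu)^2$. Next, I would insert Theorem \ref{GZ1} to obtain
\[
\langle y^G(m,\mu),\, y^G(m,\mu)\rangle_{NT} = |b(m,\mu)|^2 \cdot \frac{2\sqrt N}{\pi\|g\|^2}\, L'(G,1).
\]

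The remaining step is to convert $|b(m,\mu)|^2$ into the special value $L(G,\chi_D,1)$ via the Waldspurger-type formula of Gross--Kohnen--Zagier \cite[Chapter II, Corollary 1]{GKZ} (see also \cite{Sk}). For a fundamental discriminant $D=-4Nm$ and $\mu\in L'/L$ with $Q(\mu)\equiv m\pmod{1}$, this formula reads
\[
|b(m,\mu)|^2 \;=\; \frac{\|g\|^2\,\sqrt{|D|}}{8\pi\,\|G\|^2\,\sqrt N}\, L(G,\chi_D,1),
\]
under the normalization of $g$ and $G$ fixed in Section \ref{sect:5.1}. I would briefly verify this normalization by comparing the Dirichlet series identity \eqref{eq:dirid} together with \eqref{eq:newform} with the Waldspurger formula as stated for Jacobi newforms in \cite{GKZ, Sk}, tracking the $\sqrt N$ factor that enters through the rescaling of the quadratic form $Q(x)=N\det(x)$ on $V$.

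Substituting this back yields
\[
\langle y^G(m,\mu),\, y^G(m,\mu)\rangle_{NT} \;=\; \frac{\|g\|^2\,\sqrt{|D|}}{8\pi\,\|G\|^2\,\sqrt N}\,L(G,\chi_D,1) \cdot \frac{2\sqrt N}{\pi\|g\|^2}\,L'(G,1) \;=\; \frac{\sqrt{|D|}}{4\pi^2\|G\|^2}\,L(G,\chi_D,1)\,L'(G,1),
\]
which is the desired identity. The substantive content is already packaged in Theorems \ref{GZ1} and \ref{thm:isogen}; the main obstacle is therefore purely bookkeeping---getting the constants in the Waldspurger-type formula precisely right under our chosen normalization of the lattice $L$, the Petersson inner product on $S_{3/2,\rho_L}$, and the quadratic form on $V$. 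Once these normalizations are pinned down (by comparing with the scalar-valued formulation in \cite{GKZ}), the derivation above gives the Gross--Zagier formula immediately.
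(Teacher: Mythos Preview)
Your proposal is correct and follows essentially the same route as the paper: use $y^G(m,\mu)=b(m,\mu)\,y(f)$ from Theorem~\ref{thm:isogen}, insert Theorem~\ref{GZ1}, and then apply the Waldspurger-type formula $b(m,\mu)^2=\frac{\|g\|^2\sqrt{|D|}}{8\pi\sqrt{N}\|G\|^2}L(G,\chi_D,1)$ from \cite{GKZ}, \cite{Sk}. The only thing the paper adds beyond your outline is the explicit normalization remark that $\|g\|=2N^{1/4}\|\phi\|$ for the corresponding Jacobi form $\phi$ (via \cite[Theorem~5.3]{EZ}, with a missing factor of $2$ noted there), which is exactly the ``bookkeeping'' you flagged.
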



\begin{proof}
This follows from Theorem \ref{GZ1} using the fact that
$y^G(m,\mu)=b(m,\mu) y(f)$ and the Waldspurger type formula
\[
b(m,\mu)^2=\frac{\|g\|^2}{8\pi
\sqrt{N}\|G\|^2}\sqrt{|D|}L(G,\chi_D,1),
\]
see \cite[Chapter II, Corollary 1]{GKZ}, and \cite{Sk}. Here we
have also used the fact that the Petersson norm $\|g\|$ is equal to
$2N^{1/4}\|\phi\|$, where $\phi$ is the Jacobi form of weight $2$
corresponding to $g$ and $ \|\phi\|$ is its Petersson norm, see
\cite[Theorem 5.3]{EZ}. (Notice that a factor of $2$ is missing in \cite{EZ} which is due to the fact that the element $(-1,0)$ of the Jacobi group acts as $(\tau,z)\mapsto (\tau,-z)$ on $\H\times \C$.)
\end{proof}

\subsection{Pull-back of Heegner divisors}

We continue to use the notation of Section \ref{sect:5.1}. Given two
cycles $\calZ (m_i, \mu_i)$ in $\mathcal Y_0(N)$, let $D_i=-4Nm_i$
and $r_i \in \mathbb Z/2N\Z$ with $\mu_i =\mu_{r_i}$ as before. We
assume that $D_0$ is prime to $2N$ and is  fundamental, and that
$D_0D_1$ is not a square so that $\mathcal Z(m_0, \mu_0)$ and
$\mathcal Z(m_1, \mu_1)$ intersect properly.
%
In this setting,
Conjecture \ref{conj4.8} is just the following theorem.

\begin{theorem} \label{theoY7.11}
\label{thm:finite}
Under the above assumptions on $D_0$ and $D_1$, the finite
intersection pairing   $\langle
\calZ(m_1,\mu_1),\calZ(m_0,\mu_0)\rangle_{fin}$ is equal to the
$(m_1,\mu_1)$-th coefficient of $\frac{-2}{\vol(K_T)}
\theta_{\calP_0}(\tau)\otimes \calE_{\calN_0}(\tau)$. That is,
\[
\langle
\calZ(m_1,\mu_1),\calZ(m_0,\mu_0)\rangle_{fin}=-\frac{2}{\vol(K_T)}
\sum_{\substack{l\in \Q\\ lx_0\in \calP_0'\\l^2 m_0 \leq m_1}}
\sum_{\substack{\nu\in \calN_0'/\calN_0\\
\nu +lx_0\equiv \mu_1\;(L)}} \kappa(m_1-l^2m_0,\nu).
\]
\end{theorem}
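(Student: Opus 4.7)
The plan is to reduce the computation of the finite intersection to an arithmetic degree calculation on the stack $\mathcal{C}$ of Section \ref{sect6}, where Theorem \ref{theoY2.1} is available. The key observation is that $\calZ(m_0,\mu_0)$ is generically a double cover of $\mathcal{C}$ (or, more precisely, of its image in $\mathcal{Y}_0(N)$) via the functor that forgets the isogeny $\pi$: given $(\pi:E\to E',\iota)$ defining a point of $\calZ(m_0,\mu_0)$, the pair $(E,\iota)$ sits in $\mathcal{C}$, and conversely any $(E,\iota)\in \mathcal{C}$ recovers $\pi$ via $E'=E/\iota(\mathfrak{n}_0)[E]$, with two choices of $\iota$ differing by complex conjugation. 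Under the identification of Lemma \ref{lemY2.2}, the lattice $\calN_0$ becomes an ideal of $\mathcal{O}_{D_0}$ with its normalized norm form, so the cycles $\calZ(m_2,\mathfrak{a},\nu)$ of Section \ref{sect6} are defined on $\mathcal{C}$ with respect to $\calN_0$.

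First I would pull back the cycle $\calZ(m_1,\mu_1)$ along $i:\calZ(m_0,\mu_0)\hookrightarrow \mathcal{Y}_0(N)$ and work on $\mathcal{C}$ via the identification above. A geometric point in the intersection $\calZ(m_0,\mu_0)\cap \calZ(m_1,\mu_1)$ must lie in a supersingular fiber for a prime non-split in $k_0=\mathbb{Q}(\sqrt{D_0})$; there, $\End(\pi)$ is an order in a quaternion algebra $\mathbb{B}$, and the two CM data produce trace-zero special endomorphisms $\boldsymbol{\beta}_0,\boldsymbol{\beta}_1$ with $\norm(\boldsymbol{\beta}_i)=m_i\norm(\mathfrak{a})$. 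The orthogonal decomposition $\boldsymbol{\beta}_1=l\boldsymbol{\beta}_0+\boldsymbol{\gamma}$ with $l\in \mathbb{Q}$ and $\boldsymbol{\gamma}\perp\boldsymbol{\beta}_0$ (of norm $(m_1-l^2m_0)\norm(\mathfrak{a})$) then matches the splitting $V=\calP_0\otimes\mathbb{Q}\oplus\calN_0\otimes\mathbb{Q}$ under Lemma \ref{lemY2.2}. The integrality condition $\mu_1+\boldsymbol{\beta}_1\in \OO_E\mathfrak{a}$ forces $l x_0\in \calP_0'$ and the component of $\boldsymbol{\gamma}$ in $\calN_0'/\calN_0$ to equal some $\nu$ with $\nu+lx_0\equiv \mu_1\pmod{L}$. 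This gives a decomposition of the pullback
\[
i^*\calZ(m_1,\mu_1)=\sum_{l,\nu}\calZ\big(m_1-l^2m_0,\mathfrak{a},\nu\big)
\]
with the sum extended precisely over the indices in the theorem.

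Next I would invoke the projection formula for the finite map $i$ (of generic degree $2$) and the multiplicity conventions of Lemma \ref{lemY1.1}, together with Theorem \ref{theoY2.1}, to convert the pullback into
\[
\langle \calZ(m_1,\mu_1),\calZ(m_0,\mu_0)\rangle_{fin}=\sum_{l,\nu}\widehat{\deg}\,\calZ\big(m_1-l^2m_0,\mathfrak{a},\nu\big)=-\frac{2}{\vol(K_T)}\sum_{l,\nu}\kappa(m_1-l^2m_0,\nu),
\]
which is exactly the desired expression for the $(m_1,\mu_1)$-th Fourier coefficient of $-\frac{2}{\vol(K_T)}\theta_{\calP_0}\otimes \calE_{\calN_0}$, since the $(m_1',\mu_1')$-coefficient of $\theta_{\calP_0}$ equals $1$ for $\mu_1'=lx_0$ with $l^2m_0=m_1'$ and $0$ otherwise. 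Here the hypothesis that $D_0$ is fundamental and coprime to $2N$ and $D_0D_1$ is not a square guarantees proper intersection, so that the pullback construction gives the correct finite intersection pairing without deformation-theoretic corrections.

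The main obstacle will be the careful bookkeeping of normalizations: matching the generic degree of $i:\calZ(m_0,\mu_0)\to \mathcal{Y}_0(N)$, the automorphism-weighted multiplicities $2/w_{K,T}$ in the definition \eqref{eq:zu} of $\calZ(U)$, and the identification $\calZ(U)=\calZ(m_0,\mu_0)$ from Proposition \ref{cor:zcomp} on the one hand, with the $\vol(K_T)$-normalization of Lemma \ref{lemY6.1} on the other. In particular one must verify that, under the moduli-theoretic identification $\calZ(m_0,\mu_0)\cong \mathcal{C}$ (together with its $\pm\iota$-twist), the appropriate $2$'s and $w_{k_0}$'s cancel so that the factor $-\frac{2}{\vol(K_T)}$ emerges unchanged from Theorem \ref{theoY2.1}. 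The remaining verification—checking that the local integrality conditions defining $\calZ(m_1,\mu_1)$ and $\calZ(m_0,\mu_0)$ together are equivalent to the union of moduli problems defining $\calZ(m_1-l^2m_0,\mathfrak{a},\nu)$ on $\mathcal{C}$ indexed by $(l,\nu)$ as above—is a local computation at each non-split prime $p$, analogous to the matching argument in the proof of Theorem \ref{theoY2.1} itself.
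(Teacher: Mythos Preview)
Your strategy is the paper's own: identify $\calZ(m_0,\mu_0)\cong\mathcal{C}$ via Lemma~\ref{lemY7.12}, pull back $\calZ(m_1,\mu_1)$, decompose the second CM action along $k_0\oplus V(E,\iota)$ at each supersingular geometric point, check that deformations on both sides match, and then invoke Theorem~\ref{theoY2.1}. The only caveat is terminological: your $\boldsymbol{\beta}_0$ commutes with $\iota$ and hence is not a ``special endomorphism'' in the Section~\ref{sect6} sense, so the correct decomposition is the paper's $\phi\bigl(\tfrac{r_1+\sqrt{D_1}}{2}\bigr)=\alpha+\boldbeta$ with $\alpha\in k_0$ and $\boldbeta\in V(E,\iota)$; the paper then parametrizes the pullback by an integer $n\equiv r_0r_1\pmod{2N}$ with $n^2\le D_0D_1$ (Proposition~\ref{propY7.14}) and only afterwards matches this with your $(l,\nu)$-sum in a separate combinatorial lemma, which is exactly the bookkeeping you anticipate.
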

In this subsection, we prove the result by pulling the intersection
back to the CM stack $\mathcal C$ studied in Section \ref{sect6} and
using Theorem \ref{theoY2.1}.
Let $\mathfrak n_i=[N,
\frac{r_i+\sqrt{D_i}}2]$. Let $\mathcal C$ be the moduli stack of CM
elliptic curves associated to the quadratic field $k_0=\mathbb
Q(\sqrt{D_0})$ defined in Section~\ref{sect6}. For a CM elliptic
curve $(E, \iota) \in \mathcal C(S)$, let $E_{\mathfrak n_0}
=E/E[\mathfrak n_0]$ and let $\pi: E\rightarrow E_{\mathfrak n_0}$
be the natural map. Write
$$
\OO_{E, \mathfrak n_0} =\End_S(\pi)=\{ \alpha \in \OO_E; \; \pi
\alpha \pi^{-1} \in \End_{S} (E_{\mathfrak n_0})\}.
$$
 The starting point is

\begin{lemma} \label{lemY7.12} There is a natural isomorphism of stacks
$$
j: \mathcal C \rightarrow \mathcal Z(m_0, \mu_0), \quad j(E, \iota)
= (\pi: E \rightarrow E_{\mathfrak n_0}, \iota).
$$
\end{lemma}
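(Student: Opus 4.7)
The plan is to construct an explicit inverse to $j$ and check both assignments define morphisms of stacks. Since $D_0$ is a fundamental discriminant coprime to $2N$, the order $\OO_{D_0}$ coincides with $\OO_{k_0}$, and the ideal $\mathfrak n_0=[N,\frac{r_0+\sqrt{D_0}}{2}]$ is an invertible $\OO_{k_0}$-ideal of norm $N$ which is \emph{proper} (i.e.\ locally principal) in $\OO_{k_0}$, and in particular coprime to its complex conjugate $\bar{\mathfrak n}_0$.

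For well-definedness of $j$, start with $(E,\iota)\in\mathcal C(S)$. Via $\iota$, the ring $\OO_{k_0}$ acts on $E$, so we may form the finite flat subgroup scheme $E[\mathfrak n_0]\subset E$. Using that $\mathfrak n_0$ is invertible and $(\mathfrak n_0,\bar{\mathfrak n}_0)=\OO_{k_0}$, a standard local computation (passing to a Zariski cover where $\mathfrak n_0$ is principal, say $\mathfrak n_0=(\alpha)$, so that $E[\mathfrak n_0]=\ker\iota(\alpha)$) shows that $E[\mathfrak n_0]$ is locally free of rank $\norm(\mathfrak n_0)=N$ and that the quotient isogeny $\pi:E\to E_{\mathfrak n_0}:=E/E[\mathfrak n_0]$ is cyclic of degree $N$. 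Since $\OO_{k_0}$ is commutative, $\iota(\OO_{k_0})$ preserves $E[\mathfrak n_0]$, so $\iota$ descends to an $\OO_{k_0}$-action on $E_{\mathfrak n_0}$ and hence lands in $\End(\pi)$; by construction $\iota(\mathfrak n_0)$ annihilates $\ker\pi=E[\mathfrak n_0]$. This shows $(\pi,\iota)\in\mathcal Z(m_0,\mu_0)(S)$, and the assignment is clearly functorial in $S$.

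For the inverse $j^{-1}$, send $(\pi:E\to E',\iota)\in\mathcal Z(m_0,\mu_0)(S)$ to $(E,\iota)$, where we view $\iota$ as an $\OO_{k_0}$-action on $E$ via the inclusion $\End(\pi)\subset\End_S(E)$. The main involution on $\End_S(E)$ restricted to $\iota(\OO_{k_0})$ gives complex conjugation on $\OO_{k_0}$ (this follows, for instance, from the fact that the Rosati involution on a CM elliptic curve is complex conjugation on the CM order), so $(E,\iota)\in\mathcal C(S)$. The remaining point is to verify $j\circ j^{-1}=\id$, i.e.\ that the subgroup $\ker\pi\subset E$ equals $E[\mathfrak n_0]$. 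The inclusion $\ker\pi\subset E[\mathfrak n_0]$ is exactly the condition $\iota(\mathfrak n_0)\ker\pi=0$, and both sides are locally free of the same rank $N$ (the former by cyclicity of $\pi$ of degree $N$, the latter by the computation above), so the inclusion is an equality. The identity $j^{-1}\circ j=\id$ is immediate from the definitions.

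I would expect the main obstacle to be purely bookkeeping on the stack side: carefully checking that $E[\mathfrak n_0]$ has the right structure in families over an arbitrary base $S$ (not just over a field), and that automorphisms on both sides match. This is handled by reducing to the principal case via an fppf cover where $\mathfrak n_0$ becomes principal, and by noting that an automorphism of $(E,\iota)$ is an element of $\iota(\OO_{k_0})^\times=\OO_{k_0}^\times$ which automatically preserves $E[\mathfrak n_0]$ and hence lifts uniquely to an automorphism of $(\pi,\iota)$, giving an isomorphism on automorphism groups.
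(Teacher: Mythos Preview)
Your proof is correct and follows the same approach as the paper, but with considerably more detail. The paper's proof is three sentences: it notes that $j(E,\iota)\in\mathcal Z(m_0,\mu_0)(S)$ because $\iota(\mathfrak n_0)E[\mathfrak n_0]=0$ and $\iota(\OO_{D_0})\subset\OO_{E,\mathfrak n_0}$, then asserts that $j$ is ``obviously a bijection'' and that $\Aut_S(E,\iota)=\Aut_S(j(E,\iota))$. You have supplied the content behind ``obviously'': constructing the inverse explicitly and checking $\ker\pi=E[\mathfrak n_0]$ via the inclusion from the moduli condition together with equality of ranks.

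One small wording issue: your phrase ``passing to a Zariski cover where $\mathfrak n_0$ is principal'' is off, since $\mathfrak n_0$ is a fixed ideal in the number ring $\OO_{k_0}$ and does not vary over $S$. What you want is simply that $E[\mathfrak n_0]$, defined as the scheme-theoretic intersection $\ker\iota(N)\cap\ker\iota(\tfrac{r_0+\sqrt{D_0}}{2})$, is finite locally free of rank $N$ over $S$; this follows from the Serre tensor construction (or directly, using coprimality of $\mathfrak n_0$ and $\bar{\mathfrak n}_0$ to split $E[N]$). This is cosmetic and does not affect the argument.
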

\begin{proof} Since $\iota(\mathfrak n_0) \ker \pi =\iota(\mathfrak
n_0) E[\mathfrak n_0]=0$, and $\iota(\OO_{D_0}) \subset \OO_{E,
\mathfrak n_0}$, one has for $(E, \iota) \in \mathcal C(S)$ that $j(E, \iota) \in \mathcal Z(m_0, \mu_0)(S)$. The map $j$ is obviously a
bijection. It is also easy to check that $\Aut_S(E, \iota) = \Aut_S(j(E,
\iota))$. So $j$ is an isomorphism.
\end{proof}

Combining this map with the natural map from $\mathcal Z(m_0, \mu_0)$
to $\mathcal X_0(N)$, we obtain a natural map from $\mathcal C$ to
$\mathcal X_0(N)$, still denoted by $j$. Its direct image is the
cycle $\mathcal Z(m_0, \mu_0)$. So
\begin{equation*}
\langle \mathcal Z(m_1, \mu_1), \mathcal Z(m_0, \mu_0) \rangle_{fin} =
\widehat{\deg} (j^* \mathcal Z(m_1, \mu_1)).
\end{equation*}

Looking at the fiber product diagram
\begin{equation*}
\xymatrix{
 j^*\mathcal Z(m_1, \mu_1) =\mathcal Z(m_1, \mu_1) \times_{\mathcal X_0(N)} \mathcal C \ar[r] \ar[d]  &\mathcal C\ar[d]
 \cr
 \mathcal Z(m_1, \mu_1)\ar[r] &\mathcal X_0(N)\cr},
 \end{equation*}
one sees that $j^*\mathcal Z(m_1, \mu_1)(S)$ consists of triples $(E, \iota,
\phi)$ where $(E, \iota)\in \mathcal C(S)$, and
\begin{equation*}
\phi: \, \OO_{D_1}   \hookrightarrow \OO_{E, \mathfrak n_0}
\end{equation*}
such that
\begin{equation*}
\phi(\mathfrak n_1) E[\mathfrak n_0] =0.
\end{equation*}

\begin{proposition}
\label{propY7.14}
One has
$$
\langle \mathcal Z(m_1, \mu_1), \mathcal Z(m_0, \mu_0)\rangle_{fin}
= -\frac{2}{\vol
(K_T)} \sum_{\substack{n \equiv r_0 r_1 \pmod{2N}\\
n^2 \le D_0 D_1}} \kappa(\frac{D_0D_1-n^2}{4N |D_0|},\frac{\tilde
 2 n }{ \sqrt{D_0}}).
$$
Here $\tilde 2  \in \mathbb Z/D_0\Z$ is determined by the condition
$2\cdot \tilde 2
\equiv 1 \pmod{D_0}$.
\end{proposition}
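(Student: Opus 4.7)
The plan is to compute $\langle\calZ(m_1,\mu_1),\calZ(m_0,\mu_0)\rangle_{fin}$ as $\widehat{\deg}\bigl(j^*\calZ(m_1,\mu_1)\bigr)$, to decompose the fiber product $j^*\calZ(m_1,\mu_1)$ as a disjoint union of cycles of the type $\calZ(m,\frakn_0,\mu)$ treated in Section~\ref{sect6}, and finally to invoke Theorem~\ref{theoY2.1} term by term on each summand.

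First I would analyze an $S$-point $(E,\iota,\phi)$ of the fiber product. By Lemma~\ref{lemY7.12} such a point is supported in non-split characteristics, where $\End(E)\otimes\Q$ is the rational quaternion algebra $\mathbb{B}$ and admits the orthogonal decomposition $\iota(k_0)\oplus V(E,\iota)\otimes\Q$, with $V(E,\iota)$ anti-commuting with $\iota(k_0)$. Writing uniquely
\[
\phi\!\left(\tfrac{r_1+\sqrt{D_1}}{2}\right)=\iota(a)+\boldbeta,\qquad a\in k_0,\ \boldbeta\in V(E,\iota)\otimes\Q,
\]
the trace condition forces $a=\tfrac{r_1}{2}+\tfrac{n}{2\sqrt{D_0}}$ for some rational $n$, while the norm identity $\norm(\iota(a)+\boldbeta)=a\bar a+\norm(\boldbeta)$ (whose cross terms cancel by the anti-commutation $\iota(a)\boldbeta=\boldbeta\iota(\bar a)$) yields
\[
\norm(\boldbeta)=\frac{D_0D_1-n^2}{4|D_0|}=\frac{D_0D_1-n^2}{4N|D_0|}\cdot\norm(\frakn_0),
\]
so positivity of the reduced norm gives $n^2\le D_0D_1$.

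The heart of the argument is to pin down, for each such $(E,\iota,\phi)$, the residue of $n$ modulo $2N$, the image of $\boldbeta$ modulo $\frakn_0$, and the integrality of $n$. Integrality comes from $\iota(a)+\boldbeta\in\OO_E$ combined with the description of $\OO_E$ as the subset of pairs in $\partial^{-1}\oplus V(E,\iota)\otimes\partial^{-1}$ satisfying a compatibility modulo $\OO_{k_0}$. The level condition $\phi(\frakn_1)E[\frakn_0]=0$, expressed using the explicit generators $\tfrac{r_i+\sqrt{D_i}}{2}$ of $\frakn_i$, translates into the congruence
\[
n\equiv r_0r_1\pmod{2N}.
\]
The same compatibility fixes the coset class of $\boldbeta$ in $V(E,\iota)\partial^{-1}\frakn_0/V(E,\iota)\frakn_0$: under the identification $\partial^{-1}\frakn_0/\frakn_0\cong\partial^{-1}/\OO_{k_0}\cong\Z/|D_0|\Z$ used in Section~\ref{sect6}, it is read off from $a\bmod\OO_{k_0}$ and equals $\mu_n=\tfrac{\tilde2\,n}{\sqrt{D_0}}$, where $\tilde2$ is the inverse of $2$ modulo $|D_0|$ (which exists because $D_0$ is odd). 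This yields an isomorphism of stacks
\[
j^*\calZ(m_1,\mu_1)\;\cong\;\coprod_{\substack{n\in\Z\\ n\equiv r_0r_1\,(2N)\\ n^2\le D_0D_1}}\calZ\!\left(\tfrac{D_0D_1-n^2}{4N|D_0|},\,\frakn_0,\,\mu_n\right).
\]

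Finally I would apply Theorem~\ref{theoY2.1} to each summand and sum, which gives the stated formula after recalling (from Section~\ref{sect6} and Lemma~\ref{lemY6.1}) that the normalization of $\vol(K_T)$ is common to every summand. The main obstacle is the middle paragraph: extracting the correct congruence $n\equiv r_0r_1\pmod{2N}$ and the correct level datum $\mu_n$ simultaneously from the two independent level structures coming from $\frakn_0$ and $\frakn_1$, while tracking integrality of $n$ through the CRT-type compatibility defining $\OO_E$ inside $\iota(k_0)\oplus V(E,\iota)\otimes\Q$. This is primarily a careful bookkeeping task, but it requires normalizations compatible with those fixed both in Section~\ref{sect:5.1} and in Section~\ref{sect6}, and it is here that the hypotheses that $D_0$ is fundamental, odd, and prime to $2N$ enter essentially.
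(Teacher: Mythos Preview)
Your proposal is correct and follows essentially the same route as the paper: reduce to $\widehat{\deg}\bigl(j^*\calZ(m_1,\mu_1)\bigr)$, decompose $\phi\bigl(\tfrac{r_1+\sqrt{D_1}}{2}\bigr)=\iota(a)+\boldbeta$ in the quaternion algebra, extract the congruence $n\equiv r_0r_1\pmod{2N}$ and the coset $\mu_n$ from the level condition $\phi(\frakn_1)E[\frakn_0]=0$, and then apply Theorem~\ref{theoY2.1} summand by summand. The only point the paper makes explicit that you leave implicit is the deformation step---checking that $(E,\iota,\phi)$ lifts to a local $W(\bar{\mathbb F}_p)$-algebra $R$ if and only if $(E,\iota,\boldbeta)$ does---which is what upgrades the bijection on geometric points to an equality of arithmetic degrees (lengths of local rings), so you should flag this when you write out the details.
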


\begin{proof}
First we look at geometric points $(E, \iota, \phi) \in
j^* \mathcal Z(m_1, \mu_1)(F)$, with $F=\mathbb C$ or
$F=\bar{\mathbb F}_p$. Then  $\OO_{E, \mathfrak n_0}$ contains
$\iota( \OO_{\kay_0})$ and $\phi(\OO_{\kay_1})$, and is thus  at
least of rank four over $\mathbb Z$. This implies $p$ is non-split
in $k_i$, $i=0,1$, and $E$ is supersingular. Assuming this,  let
$\mathbb B$ be the quaternion algebra over $\mathbb Q$ ramified
exactly at $p$ and $\infty$, and let
$$
\iota_0:  \kay_0 \hookrightarrow \mathbb B
$$
be a fixed embedding. Choose a prime $p_0\nmid 2pD_0$ such that (as
in Section \ref{sect6})
$$
\hbox{inv}_l \mathbb B= \begin{cases}
    (D_0, -p_0 p)_l &\ff p \hbox{ inert in  } \kay_0,
    \\
     (D_0, -p_0)_l  &\ff p \hbox{ ramified in  } \kay_0
     \end{cases}
     $$
     for every prime $l$.
     In particular, $p_0=\mathfrak p_0 \overline{\mathfrak p}_0$ is
     split in $\kay_0$. Let $\kappa_\mathbb B = -p_0 p$ or $-p_0$
     depending on whether $p$ is inert or ramified in $\kay_0$, and
     let $\delta_\mathbb B\in \mathbb B^*$ such that
     $\delta^2=\kappa_\mathbb B$ and $\delta_\mathbb B \alpha
     =\bar\alpha \delta_\mathbb b$ for $\alpha \in \kay_0$. Here we
     identify $\alpha \in \kay_0$ with $\iota_0(\alpha) \in \mathbb
     B$.
Then $\OO_E=\End E$ is a maximal  order of $\mathbb B$. Write
\begin{equation}
\label{Yeq2.7}
\phi(\frac{r_1+ \sqrt{D_1}}2) =\alpha +\boldbeta \in \OO_{E,
\mathfrak n_0}
\end{equation}
with $\alpha \in \kay_0$ and $\boldbeta \in \delta_{\mathbb
B}\kay_0$. The condition $\phi(\mathfrak n_1) E[\mathfrak n_0]=0$ is
the same as
\begin{equation*}
\phi(\frac{r_1+ \sqrt{D_1}}2)
E[\mathfrak n_0] =0,
\end{equation*}
which is the same  as
\begin{equation*}
\alpha +\boldbeta \in \OO_{E} \mathfrak n_0.
\end{equation*}
 In particular, $\alpha \in \partial_0^{-1}
\mathfrak n_0$. One sees from (\ref{Yeq2.7}) that
$$
\phi(\sqrt{D_1}) = \alpha_1 + 2 \boldbeta
$$
with $\alpha_1 = -r_1 + 2 \alpha \in \partial_0^{-1}$ and $\tr
\alpha_1=0$.
We write
$$\alpha_1 =\frac{n}{\sqrt{D_0}}, \quad \alpha
=\frac{1}{\sqrt{D_0}}(a N + b \frac{r_0+\sqrt{D_0}}2).
$$
Then we
see
$$
n=-r_1 \sqrt{D_0} + 2a N + br_0 +b \sqrt{D_0}
$$
and thus $b=r_1$, and
$$n =2aN+r_0 r_1 \equiv r_0 r_1 \pmod{2N}.
$$
Moreover,
$$
D_1 = \alpha_1^2 -4 \norm(\boldbeta)=\frac{n^2}{D_0} -4 \norm(\boldbeta),
$$
and so
$$
\norm(\boldbeta) =\frac{D_0D_1 -n^2}{4|D_0|}=\frac{D_0D_1-n^2}{4N
|D_0|} \norm(\mathfrak n_0) \in \frac{1}{|D_0|} \mathbb Z_{>0}.
$$
This implies that
\begin{equation}
(E, \iota, \boldbeta) \in \mathcal Z(\frac{D_0D_1-n^2}{4N |D_0|},
\mathfrak n_0, \frac{n+ r_1\sqrt{D_0}}{2 \sqrt{D_0}})(\bar{\mathbb
F}_p).
\end{equation}

Conversely, if $(E, \iota, \boldbeta) \in \mathcal
Z(\frac{D_0D_1-n^2}{4N }, \mathfrak n_0, \frac{n+ r_1\sqrt{D_0}}{2
\sqrt{D_0}})(\bar{\mathbb F}_p)$ for some $n \equiv r_0 r_1 \pmod{2N}$, then
$$\boldbeta \in \delta_\mathbb B
\partial_0^{-1}\mathfrak n_0, \quad  \norm(\boldbeta) =
\frac{D_0D_1-n^2}{4N|D_0| }\norm(\mathfrak n_0)
$$
and
$$
\alpha + \boldbeta \in \OO_E \mathfrak n_0
$$
with $\alpha =\frac{n+r_1\sqrt{D_0}}{2 \sqrt{D_0}}$.
If we write $n = r_0
r_1 +2a N$,
then
$$
\alpha =\frac{n+ r_1\sqrt{D_0}}{2 \sqrt{D_0}}
=\frac{1}{\sqrt{D_0}}(aN +r_1 \frac{r_0 +\sqrt{D_0}}2) \in
\partial_0^{-1} \mathfrak n_0.
$$
So $\phi(\frac{r_1 +\sqrt{D_1}}2) =\alpha + \boldbeta \in
\OO_E\mathfrak n_0$ gives $(E, \iota, \phi) \in j^*\mathcal Z(m_1,
\mu_1)(\bar{\mathbb F}_p)$. Hence we have proved an isomorphism
\begin{equation}
j^*\mathcal Z(m_1, \mu_1) (\bar{\mathbb F}_p) \cong
\bigsqcup_{\substack{n \equiv r_0 r_1 \pmod{2N}\\
n^2 \le D_0 D_1}}\mathcal Z(\frac{D_0D_1-n^2}{4N |D_0|}, \mathfrak
n_0, \frac{n+ r_1\sqrt{D_0}}{2 \sqrt{D_0}})(\bar{\mathbb F}_p),
\end{equation}
given by $( E, \iota, \phi) \mapsto (E, \iota, \boldbeta)$ via the
relation
$$
\phi(\frac{r_1 +\sqrt{D_1}}2) =\frac{n+ r_1\sqrt{D_0}}{2 \sqrt{D_0}}
+\boldbeta.
$$
Let $W=W(\bar{\mathbb F}_p)$ be the Witt ring of $\bar{\mathbb
F}_p$. It is not hard to check that for any locally complete
$W$-algebra $R$ with residue field $\bar{\mathbb F}_p$, $(E, \iota,
\phi)$ lifts to an element in $j^*\mathcal Z(m_1, \mu_1)(R)$ if and
only if $(E, \iota, \boldbeta)$ lifts to an element in $\mathcal
Z(\frac{D_0D_1-n^2}{4N }, \mathfrak n_0, \frac{n+2 r_1\sqrt{D_0}}{2
\sqrt{D_0}})(R)$. So we have by Theorem \ref{theoY2.1} that
\begin{align*}
\langle \mathcal Z(m_1, \mu_1), \mathcal Z(m_0, \mu_0) \rangle
 &=\widehat{\deg} (j^*\mathcal Z(m_1, \mu_1))
 \\
 &= \sum_{\substack{n \equiv r_0 r_1 \pmod{2N}\\
n^2 \le D_0 D_1}} \widehat{\deg}\mathcal Z(\frac{D_0D_1-n^2}{4N
|D_0|}, \mathfrak n_0, \frac{n+ r_1\sqrt{D_0}}{2 \sqrt{D_0}})
\\
 &= -\frac{2}{\vol(K_T)} \sum_{\substack{n \equiv r_0 r_1 \pmod{2N}\\
n^2 \le D_0 D_1}} \kappa (\frac{D_0D_1-n^2}{4N |D_0|},\frac{n+
r_1\sqrt{D_0}}{2 \sqrt{D_0}}).
\end{align*}
Since $\frac{n+ r_1\sqrt{D_0}}{2 \sqrt{D_0}} \equiv \frac{\tilde 2
n}{\sqrt{D_0}} \pmod{\OO_{D_0}}$, this concludes the proof of the proposition.
\end{proof}

Now Theorem \ref{theoY7.11} follows from the above proposition and the following lemma.

\begin{lemma} Let the notation be as above. Then one has
\begin{equation}
\sum_{\substack{ l\in \mathbb Q \\ l x_0 \in \calP_0' \\ l^2 m_0 \le
m_1}} \sum_{\substack{ \nu \in \calN_0'/\calN_0 \\ \nu +  l x_0
\equiv \mu_1 \;(L)}} \kappa_\nu(m_1-l^2m_0) = \sum_{\substack{n\in
\mathbb Z, \, n^2 \le  D_0 D_1\\ n \equiv r_0 r_1 \;(2N)}}
\kappa(\frac{D_0D_1-n^2}{4 N |D_0|}, {\frac{\tilde 2
n}{\sqrt{D_0}}}).
\end{equation}
\end{lemma}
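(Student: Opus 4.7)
The plan is to establish the identity by constructing an explicit bijection between the indexing sets of the two sums and matching summands term by term. First I would observe that the assumption that $D_0$ is coprime to $2N$ forces $t_0 := \gcd(r_0, 2N) = 1$: if a prime $p$ divided both $r_0$ and $2N$, then $p \mid r_0^2 \equiv D_0 \pmod{4N}$ would give $p \mid \gcd(D_0, 2N) = 1$. Consequently $\calP_0 = 2N \mathbb{Z} \cdot x_0$ and $\calP_0' = (1/|D_0|)\mathbb{Z} \cdot x_0$, so the condition $lx_0 \in \calP_0'$ is equivalent to $l = n/D_0$ for some $n \in \mathbb{Z}$. Under this parameterization, $l^2 m_0 = n^2/(4N|D_0|)$ and hence $m_1 - l^2 m_0 = (D_0 D_1 - n^2)/(4N|D_0|)$, while $l^2 m_0 \le m_1$ becomes $n^2 \le D_0 D_1$. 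So the index sets agree up to the congruence condition, which I would extract next.

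For the inner sum, I would argue that for each $l$ there is at most one $\nu \in \calN_0'/\calN_0$ with $\nu + lx_0 \equiv \mu_1 \pmod{L}$. Indeed, two solutions would differ by an element of $L \cap \calN_0' \subset L \cap U = \calN_0$. For existence, decompose $\mu_1 = \mu_1^+ + \mu_1^-$ orthogonally with $\mu_1^+ = (r_0 r_1/D_0) x_0 \in \calP_0'$; writing $M = \calP_0 \oplus \calN_0$, the congruence is equivalent to $(\mu_1^+ - lx_0, \mu_1^- - \nu)$ lying in $L/M$ inside $M'/M = \calP_0'/\calP_0 \oplus \calN_0'/\calN_0$. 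A direct computation using $\ell = \begin{pmatrix} b & -a/N \\ c & -b \end{pmatrix} \in L$ gives $\ell^+ = -(2NA/D_0)\, x_0$ where $A = a(D_0-r_0^2)/(4N) - br_0 - c$ runs through $\mathbb{Z}$, so $\pi_P(L/M) = \mathbb{Z} \cdot (2N/D_0)\, x_0 \pmod{\calP_0}$. The requirement that $(r_0 r_1 - n)/D_0 \cdot x_0$ lie in this subgroup modulo $\calP_0$ unpacks to exactly $n \equiv r_0 r_1 \pmod{2N}$, matching the RHS.

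It remains to identify the unique $\nu$ explicitly. Writing $n = r_0 r_1 + 2Ns$ and choosing $\ell = \begin{pmatrix} 0 & 0 \\ -s & 0 \end{pmatrix} \in L$ so that $\ell^+ = \mu_1^+ - lx_0$, I would set $\nu = \mu_1 - lx_0 - \ell$ and apply $f^{-1}$ from Lemma \ref{lemY2.2} to obtain $z := f^{-1}(\nu) = r_1/2 + n/(2\sqrt{D_0}) \in \mathfrak n_0 \partial_0^{-1}$. Using that $D_0 \equiv 1 \pmod 4$ is odd fundamental and $\sqrt{D_0} = 2\omega - 1$ with $\omega = (1+\sqrt{D_0})/2$, one computes $\sqrt{D_0}\, z = (n-r_1)/2 + r_1 \omega$, which lies in $\OO_{D_0}$ because $n \equiv r_0 r_1 \equiv r_1 \pmod 2$ (the parity of $r_0, r_1$ being forced by $r_i^2 \equiv D_i \pmod 4$). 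Reducing modulo $\sqrt{D_0}$ and using $\omega \equiv \tilde 2 \pmod{\sqrt{D_0} \OO_{D_0}}$ yields $\sqrt{D_0}\, z \equiv \tilde 2\, n \pmod{\sqrt{D_0}\OO_{D_0}}$, i.e.\ $z \equiv \tilde 2 n/\sqrt{D_0}$ in $\partial_0^{-1}/\OO_{D_0}$, matching the RHS.

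The principal obstacle I anticipate is making the identification $\calN_0'/\calN_0 \cong \mathfrak n_0 \partial_0^{-1}/\mathfrak n_0 \cong \partial_0^{-1}/\OO_{D_0} \cong \mathbb{Z}/|D_0|\mathbb{Z}$ fully canonical when $\mathfrak n_0$ is non-principal, since no single generator is available. A clean way around this is to pass through the cyclic $\OO_{D_0}$-module structure: both $\mathfrak n_0 \partial_0^{-1}/\mathfrak n_0$ and $\partial_0^{-1}/\OO_{D_0}$ are annihilated exactly by $\partial_0 = \sqrt{D_0}\, \OO_{D_0}$, and the map $z \mapsto \sqrt{D_0}\, z \pmod{\sqrt{D_0}\OO_{D_0}}$ provides a canonical isomorphism to $\OO_{D_0}/\sqrt{D_0}\OO_{D_0} \cong \mathbb{Z}/|D_0|\mathbb{Z}$; this reduces the final verification to the concrete computation above. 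Putting the four steps together gives a term-by-term match of the two sums, which concludes the proof.
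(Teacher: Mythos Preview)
Your proof is correct and follows essentially the same route as the paper's. Both arguments set up the bijection $l\leftrightarrow n$ via $l=n/D_0$, extract the congruence $n\equiv r_0r_1\pmod{2N}$, pin down the unique class $\nu$, and match the $\kappa$-arguments via the isomorphism $f$ of Lemma~\ref{lemY2.2}.

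The only genuine difference is organizational. The paper parametrizes $\calN_0'/\calN_0$ explicitly by $\nu(a)=f(Na/\sqrt{D_0})$, writes the matrix $\nu(a)+lx_0$ out, and reads off the two congruences $n\equiv 2Na\pmod{D_0}$ and $n\equiv r_0r_1\pmod{2N}$ directly from the entries. You instead use the orthogonal decomposition: projecting $\mu_1-lx_0-\nu\in L$ onto $V_+$ gives $\mu_1^+ - lx_0\in\pi_{V_+}(L)=(2N/D_0)\Z\cdot x_0$, which is exactly $n\equiv r_0r_1\pmod{2N}$, and then you recover $\nu$ by subtracting a specific $\ell\in L$. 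This buys a little conceptual clarity (uniqueness of $\nu$ falls out of $L\cap\calN_0'=\calN_0$, and the congruence is visibly the obstruction to lifting along $\pi_{V_+}$), at the cost of having to check separately that the resulting $\nu$ lands in $\calN_0'$ rather than merely in $U$ --- which does follow since $\mu_1,\ell\in L'\subset\calP_0'\oplus\calN_0'$ and $lx_0\in\calP_0'$.

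Your element $z=f^{-1}(\nu)=r_1/2+n/(2\sqrt{D_0})$ is exactly the quantity $\frac{n+r_1\sqrt{D_0}}{2\sqrt{D_0}}$ appearing in the paper's Proposition~\ref{propY7.14}, and your reduction $\sqrt{D_0}z\equiv\tilde 2 n\pmod{\partial_0}$ matches the paper's assertion that this element is $\equiv\tilde 2 n/\sqrt{D_0}\pmod{\OO_{D_0}}$. Both treatments are equally informal about why one may pass freely between $\partial_0^{-1}\frakn_0/\frakn_0$ and $\partial_0^{-1}/\OO_{D_0}$ in the second argument of $\kappa$; the justification is that $\frakn_0$ is coprime to $\partial_0$, so the two discriminant groups are canonically identified locally at each prime $q\mid D_0$, and by Theorem~\ref{thm:kappaexplicit} the value $\kappa(m,\mu)$ depends only on these local components.
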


\begin{proof}
It is clear  that
$l x_0 \in \calP_0'$ if and only if $l =\frac{n}D$ with $n \in \mathbb
Z$. The inequality $l^2 m_0 \le m_1$ is the same as $n^2 \le D_0
D_1$. By Lemma \ref{lemY2.2}, one sees that
\begin{equation}
\label{eqY2.3}
\nu(a) =f(\frac{Na}{\sqrt D_0})=\frac{a}{D_0} \kzxz {-r_0} {-2}
{\frac{D_0+r_0^2}2} {r_0}, \quad  a \in \mathbb Z/D_0\Z
\end{equation}
gives a complete set of representatives of $\calN_0'/\calN_0$.
Write
$$
\nu(a) +l x_0 =  \kzxz {\frac{ur_0}{2N}} {\frac{u}N} {aD_0
+\frac{D_0-r_0^2}{4N} u} {- \frac{ur_0}{2N}}
$$
with  $ u =\frac{n-2N
a}{D_0}$. So $\nu(a) + l x_0 \in  \mu_1 +L $  if and only if
$$
u=\frac{n-2N a}{D_0} \in \mathbb Z, \quad \hbox{ and } \quad  ur_0
\equiv r_1 \pmod{2N}.
$$
Since $(D_0, 2N)=1$, and $D_0 \equiv r_0^2 \pmod{4N}$, one sees that
the above condition is equivalent to
$$
n \equiv 2 N a \pmod{D_0}, \quad  n\equiv r_0 r_1 \pmod{2N}.
$$
So $\nu(a) + l x_0 \in \mu_1 + L$  if and only if $n \equiv r_0
r_1 \pmod{2N}$ and $Na  \equiv \tilde 2 n \pmod{D_0}$. In such a case,
Lemma \ref{lemY2.2} implies
$$
\kappa(t, {\nu(a)}) = \kappa(t, {\frac{Na}{\sqrt {D_0}}})=\kappa
(t, {\frac{\tilde 2 n}{\sqrt{D_0}}}).
$$
Finally, one checks
$$
m_1 -l^2 m_0 =-\frac{D_1}{4N} +\frac{n^2}{D_0^2} \frac{D_0}{4N}
=\frac{D_0D_1-n^2}{4N|D_0|}.
$$
Putting this together, one proves the proposition.
\end{proof}

Now Conjecture \ref{conjY5.2} becomes the following theorem in our
setting.

\begin{theorem}
\label{thm:fh}

Assume that $D_0$ is a fundamental discriminant coprime to $2N$. Let $f$ be any element of
$
H_{1/2, \bar{\rho}_L}$.
Then
\begin{align}
\label{eq:7.15}
\langle \hat{\mathcal Z}^c(f),   \mathcal Z(m_0,\mu_0)
\rangle_{Fal}= \frac{2}{\vol(K_T)}\left( c^+(0,0)\kappa(0,0)+ L'( \xi(f),U, 0)\right).
\end{align}
\end{theorem}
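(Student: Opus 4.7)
My plan is to decompose the Faltings pairing as
$$\langle \hat{\mathcal Z}^c(f), \mathcal Z(m_0, \mu_0)\rangle_{Fal} = \tfrac{1}{2}\Phi(Z(m_0, \mu_0), f) + \langle \mathcal Z^c(f), \mathcal Z(m_0, \mu_0)\rangle_{fin}$$
and evaluate the two pieces separately. For the Archimedean piece, since $D_0$ is fundamental and coprime to $2N$, Proposition \ref{cor:zcomp} identifies $Z(U) = Z(m_0, \mu_0)$, so Theorem \ref{theoY4.6} yields
$$\tfrac{1}{2}\Phi(Z(m_0, \mu_0), f) = \tfrac{2}{\vol(K_T)}\bigl(\CT\langle f^+, \theta_{\calP_0}\otimes \calE_{\calN_0}\rangle + L'(\xi(f), U, 0)\bigr).$$
The $m=0$ Fourier coefficient of $\theta_{\calP_0}\otimes \calE_{\calN_0}$ equals $\kappa(0,0)\delta_{\mu,0}$, because $r(0,\mu_1)=\delta_{\mu_1,0}$ on the positive-definite line $\calP_0$ and $\kappa(0,\mu_2)=\kappa(0,0)\delta_{\mu_2,0}$ by the discussion following Theorem \ref{thm:kappaexplicit}. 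This isolates the $c^+(0,0)\kappa(0,0)$ term in the constant-term pairing.

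For the finite piece, $\mathcal C(f)$ is supported on cuspidal components disjoint from $\mathcal Z(m_0, \mu_0)$, so by linearity
$$\langle \mathcal Z^c(f), \mathcal Z(m_0, \mu_0)\rangle_{fin} = \sum_{m_1 > 0,\, \mu_1} c^+(-m_1, \mu_1)\, \langle \mathcal Z(m_1, \mu_1), \mathcal Z(m_0, \mu_0)\rangle_{fin}.$$
For those $(m_1, \mu_1)$ with $D_0 D_1 = 16 N^2 m_0 m_1$ not a square, Theorem \ref{theoY7.11} evaluates the individual intersection as $-\tfrac{2}{\vol(K_T)}$ times the $(m_1, \mu_1)$-Fourier coefficient of $\theta_{\calP_0}\otimes \calE_{\calN_0}$. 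Summing these and combining with the Archimedean formula cancels the non-constant part of $\CT\langle f^+, \theta_{\calP_0}\otimes \calE_{\calN_0}\rangle$, leaving precisely the asserted right-hand side, for any $f$ whose principal part avoids square-ratio indices.

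The main obstacle is the remaining case $m_1 = t^2 m_0$ for some $t \in \mathbb Q_{>0}$: then $D_0 D_1$ is a square, the cycles $\mathcal Z(m_1, \mu_1)$ and $\mathcal Z(m_0, \mu_0)$ share geometric components, and Theorem \ref{theoY7.11} is inapplicable as stated. I would handle this uniformly by pulling back the full arithmetic divisor to $\mathcal C$ along the closed immersion $j: \mathcal C \hookrightarrow \mathcal X_0(N)$ of Lemma \ref{lemY7.12}. The projection formula gives
$$\langle \hat{\mathcal Z}^c(f), \mathcal Z(m_0, \mu_0)\rangle_{Fal} = \widehat{\deg}(j^* \hat{\mathcal Z}^c(f)),$$
which is defined regardless of the properness of the original intersection. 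The pullback $j^* \hat{\mathcal Z}^c(f)$ on $\mathcal C$ has Archimedean contribution $\tfrac{1}{2}\Phi(Z(U), f)$, and a finite part decomposing, by the extension of Proposition \ref{propY7.14} to all index pairs, as a sum of $n=0$ Heegner cycles $\mathcal Z(m', \mathfrak n_0, \mu')$ on $\mathcal C$ together with a new ``diagonal'' $m'=0$ contribution arising precisely from the square-ratio indices. Applying Theorem \ref{theoY2.1} to the $m'>0$ terms and the Chowla--Selberg formula $\kappa(0,0) = \log|D_0| - 2\Lambda'(\chi_{D_0}, 0)/\Lambda(\chi_{D_0}, 0)$ to the $m'=0$ term, and matching each with a Fourier coefficient of $\theta_{\calP_0}\otimes \calE_{\calN_0}$ via the same index-counting lemma that closes the proof of Theorem \ref{theoY7.11}, recovers the desired formula including the $c^+(0,0)\kappa(0,0)$ correction.
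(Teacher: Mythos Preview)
Your treatment of the proper-intersection case is essentially the paper's own: identify $Z(U)=Z(m_0,\mu_0)$ via Proposition~\ref{cor:zcomp}, plug in Theorem~\ref{theoY4.6} for the Archimedean side, plug in Theorem~\ref{theoY7.11} for the finite side, and add. You are even more careful than the paper in isolating the $c^+(0,0)\kappa(0,0)$ term inside the constant term pairing; the paper's displayed formula for $\langle\mathcal Z^c(f),\mathcal Z(m_0,\mu_0)\rangle_{fin}$ tacitly drops this term and then recovers it when summing.

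Where you diverge is the improper case $D_0D_1$ a perfect square, and here there is a real gap. Your claim that $\widehat{\deg}(j^*\hat{\mathcal Z}^c(f))$ is ``defined regardless of the properness of the original intersection'' is not correct: the pullback $j^*$ of an arithmetic divisor along a closed immersion is only defined when the image of $j$ is not contained in the support of the divisor, which is exactly what fails when $\mathcal Z(m_0,\mu_0)$ is a component of $\mathcal Z(f)$. In that situation the Green function $\Phi(\cdot,f)$ has its logarithmic singularity along $Z(U)$, and on the finite side $j^*\mathcal Z(m_1,\mu_1)$ is a self-intersection, not a cycle class computable by the moduli description underlying Proposition~\ref{propY7.14}. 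Your proposed ``extension of Proposition~\ref{propY7.14} to all index pairs'' and the ``diagonal $m'=0$ contribution'' are therefore not established steps but new claims that would require a separate analysis (essentially a computation of arithmetic self-intersections of CM points, which goes well beyond what is available here).

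The paper bypasses all of this with a one-line moving argument: choose $f'\in M^!_{1/2,\bar\rho_L}$ weakly holomorphic with $c^+(0,0)=0$ so that $\mathcal Z(f+f')$ meets $\mathcal Z(m_0,\mu_0)$ properly; for such $f'$ the left side of \eqref{eq:7.15} vanishes because the Borcherds lift of $f'$ exhibits $\hat{\mathcal Z}^c(f')$ as rationally trivial in $\cha^1(\mathcal X_0(N))_\R$, and the right side vanishes because $\xi(f')=0$ and $c^+(0,0)=0$. Linearity in $f$ then finishes the proof. This reduction is both simpler and avoids the undefined pullback; you should replace your improper-case plan with it.
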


\begin{proof}
We first assume that $\mathcal Z(f)$ and $\mathcal
Z(m_0, \mu_0)$ intersect properly.
According to Proposition \ref{cor:zcomp}, the assumption on $D_0$
implies that $Z(U)=Z(m_0,\mu_0)$. Hence Theorem \ref{thm:fund}
says that
\begin{align*}
\langle \hat{\mathcal Z}^c(f),   \mathcal Z(m_0,\mu_0) \rangle_\infty &=\frac{1}{2}\Phi(Z(U),f)\\
&=\frac{2}{\vol(K_T)}\left( \CT\left(\langle f,\,
\theta_{\calP_0}(\tau)\otimes \calE_{\calN_0}(\tau)\rangle\right) +
L'(
\xi(f),U,0)\right) .
\end{align*}
According to Theorem
\ref{thm:finite}, we have
\begin{align*}
\langle \mathcal Z^c(f),   \mathcal Z(m_0,\mu_0) \rangle_{fin} &=
\sum_{m>0, \mu \in L'/L}  c^+(-m, \mu) \langle \mathcal Z(m,\mu),
\mathcal Z(m_0,\mu_0) \rangle_{fin}\\
&= -\frac{2}{\vol(K_T)} \CT\left(\langle f,\,
\theta_{\calP_0}(\tau)\otimes \calE_{\calN_0}(\tau)\rangle\right).
\end{align*}
Adding the two identities together, we obtain the assertion in the case when $\mathcal Z(f)$ and $\mathcal
Z(m_0, \mu_0)$ intersect properly.
Finally, for general $f$, we notice that there always exists a weakly holomorphic modular form $f'\in M^!_{1/2,\bar\rho_L}$ with vanishing constant term
such that
$\mathcal Z(f+f')$ and $\mathcal
Z(m_0, \mu_0)$ intersect properly.
For $f'$ both sides of the claimed identity \eqref{eq:7.15} vanish. Hence, the general case follows from the linearity of
\eqref{eq:7.15} in $f$.
\end{proof}

Notice that Theorem \ref{thm:fh} and Lemma \ref{prop:l1} can be used to give another proof of
the Gross-Zagier  formula in Theorem \ref{GZ1}.

\section{The case $n=2$}

\label{sect:8}

In this section, we verify a very special case of Conjecture
\ref{conj4.8} when $n=2$.
We plan to study the case $n=2$
systematically in a sequel to this paper.

Let $F=\Q(\sqrt\Delta)$ be a real quadratic field with prime
discriminant $\Delta \equiv 1 \pmod 4$.  We denote by $\calO_F$ the ring of integers in $F$, and write $\partial_F$ for the different of $F$.
Let $V$ be the quadratic space
\begin{equation}
V =\{ A \in M_2(F);\; A'=A^t\} =\{ A=\kzxz {a} {\lambda} {\lambda'}
{b};\; a, b \in \Q,\; \lambda \in F \}
\end{equation}
with the quadratic form $Q(A) =\det A$, which has signature $(2,
2)$. We consider the even lattice
$L=V\cap M_2(\OO_F)$. The dual lattice is
$$
L'= \{ A = \kzxz {a} {\lambda} {\lambda'} {b};\;  a, b \in \mathbb
Z,\;  \lambda \in \partial_F^{-1}\}.
$$

In this case, $$H=\Gspin(V)=\{ g \in \GL_2(F); \; \det g \in \mathbb
Q^*\}$$  acts on $V$ via
$$
 g.A=\frac{1}{\det g} g
A\, {}^tg'.
$$
Take
$$
K = H (\hat{\mathbb Z}) =\{ g \in \GL_2(\hat{\OO}_F);\;  \det g \in
\hat{\mathbb Z}\}.
$$
The following identification is well-known
\begin{equation} \label{eqn=21.2}
(\mathbb H^\pm)^2 \rightarrow \mathbb D, \quad z=(z_1, z_2) \mapsto
U=\mathbb  R \kzxz {1} {x_1} {x_2} {x_1x_2 -y_1y_2} \oplus
    \mathbb R \kzxz {0} {-y_1} {-y_2} {-x_1y_2-x_2y_1}.
    \end{equation}
Since $H(\A_f) = H(\mathbb Q)^+ K$, one can show that
$$
X_K=H(\mathbb Q) \backslash \mathbb D\times H(\A_f)/K \cong
\SL_2(\OO_F)\backslash \mathbb H^2,
$$
which  we will denote simply by $X$ in this section.

\subsection{The CM cycle $Z(U)$}

There are many CM $0$-cycles $Z(U)$. Here we choose a special one
for simplicity.  Let $k_D=\mathbb Q(\sqrt D)$ be an imaginary
quadratic field with fundamental discriminant $D$ and assume
$(D, 2\Delta)=1$.
The oriented negative $2$-plane associated to
the CM point $z=(\frac{D+\sqrt D}2, \frac{D+\sqrt D}2)
\in X$  via (\ref{eqn=21.2})
is actually rational and is given by
\begin{equation}
U=\mathbb Q f_1 \oplus \mathbb Q f_2, \quad f_1 =\kzxz {0} {1} {1}
{D},  \quad f_2 = \kzxz {2} {D} {D} {\frac{D^2+D}2}.
\end{equation}
The lattice $N=U\cap L$ is isomorphic to
\begin{equation}
(N, Q) \cong (\OO_{D}, -\norm ), \quad  f_1 \mapsto 1,\quad  f_2 \mapsto
\sqrt D,
\end{equation}
where $\calO_D$ is the ring of integers in $k_D$.
It is easy to check that
\begin{align*}
V_+&=U^\perp= \mathbb Q e_1 \oplus \mathbb Q e_2,\\
P&=V_+ \cap L
=\mathbb Z e_1 \oplus \mathbb Z e_2 \cong (\mathfrak{\Delta},
 \frac{\norm}{\Delta}),
\end{align*}
where
$$
e_1=\kzxz {0} {\sqrt\Delta} {-\sqrt\Delta} {0}, \quad e_2 =\kzxz {1}
{\frac{D+\sqrt\Delta}2} {\frac{D-\sqrt\Delta}2}  {\frac{D^2-D}4},
$$
and $\mathfrak\Delta$ is the ideal of $k_{D\Delta}=\mathbb
Q(\sqrt{D\Delta})$ over $\Delta$. Let $P_i=\mathbb Z e_i$ and
$$
M=P_1^\perp \cap L =\{ A= \kzxz {a} {b} {b} {c};\; a, b, c \in
\mathbb Z\} \cong \{ A= \kzxz {b} {a} {c} {-b} ;  \; a , b, c \in
\mathbb Z \}.
$$
So the cycle $Z(e_1, 1)$ defined in (\ref{eqY4.3}) is naturally isomorphic
 to the modular curve $Y_0(1)$ defined in Section \ref{sect7}. The inclusion
$N \subset M \subset L$ gives rise to natural morphisms
\begin{equation}
\xymatrix{
  Z(U)\ar[r]^-{j_0} &Y_0(1) \ar[r]^-{j_1} &X.}
\end{equation}
In terms of coordinates in upper half planes, they are given by
$$
j_0( [z_U^\pm, h]) = \frac{b+\sqrt D}{2a},  \quad j_1(z) =(z, z),
$$
if $[a, \frac{b+\sqrt D}2]$ is the ideal of $k_D$ associated to
$h$. The morphism $j_0$ is two-to-one, and $j_1$ is an injection. It is not
hard to check  for a non-square  integer $m>0$ that
\begin{equation} \label{eq8.7}
j_1^*Z(m, \mu)  = \sum_{\substack{ \mu_1 \in P_1'/P_1, \, \mu_2
\in M'/M \\ \mu_1 + \mu_2 \equiv \mu \;(L) \\ m_1 +m_2 =m, m_i
\ge 0}} r_{P_1}(m_1, \mu_1)  Z(m_2, \mu_2)_{Y_0(1)}.
\end{equation}
Here we use the subscript $P_1$ to indicate the dependence of
Fourier coefficients $r_{P_1}(m, \mu)$ on $P_1$, and the subscript
$Y_0(1)$ to indicate the cycles in $Y_0(1)$. We remark that $Z(m,
\mu)$ is basically the Hirzebruch-Zagier divisor $T_{m\Delta}$.

\subsection{Integral Model.}

Let  $\mathcal X$ be the Hilbert moduli stack  assigning to a
base scheme  $S$ over $\mathbb Z$   the set of the triples $(A,
\iota, \lambda)$, where
\begin{enumerate}
\item[(i)]
$A$ is a abelian surface over $S$.
\item[(ii)]
$\iota: \OO_F \hookrightarrow \End_S(A)$ is real
multiplication of $\OO_F$ on $A$.
\item[(iii)]
$\lambda: \partial_F^{-1} \rightarrow P(A)=\Hom_{\OO_F}(A,
A^\vee)^{\hbox{sym}}$ is a $\partial_F^{-1}$-polarization (in the
sense of Deligne-Papas) satisfying the condition:
$$
\partial_F^{-1}\otimes A  \rightarrow A^\vee, \quad  r \otimes a
\mapsto \lambda(r)(a)
$$
is an isomorphism.
\end{enumerate}
(See \cite[Chapter 3]{Go} and \cite[Section 3]{Vo}.)
Then it is well-known that $\mathcal X(\mathbb C) =X$.   Let $\mathcal Z(m,
 \mu)$ be the flat closure of $Z(m, \mu)$ in $\mathcal
 X$, and let $\mathcal Z(U)$ be the flat closure of $Z(U)$ in
 $\mathcal X$. Let $\mathcal C$ and $\mathcal Y_0(1)$ be as in
 Sections \ref{sect6} and \ref{sect7}.
Let $j_0: \mathcal C \to \mathcal Y_0(1)$ be the map defined in Lemma
\ref{lemY7.12} (with abuse of notation).
The map
  $j_1$ extends integrally to a closed immersion  $j_1$ from $\mathcal
 Y_0(1)$ to $\mathcal X$ defined in \cite[Lemma 2.2]{Yam=1}. Let
 $j=j_1 \circ j_0$ be the map from $\mathcal C$ to $\mathcal X$. Then the direct image of $\mathcal C$ is
 $\mathcal Z(U)$, so $j$ can be viewed as the integral extension
of the map $j$ defined in (8.5).
Taking the flat closures on both sides of
 (\ref{eq8.7}), one sees that (\ref{eq8.7}) holds also integrally.
 So we have by Theorem~\ref{theoY7.11} and Lemma~\ref{lemY7.12} that
 \begin{align*}
 \langle \mathcal Z(U), \mathcal Z(m, \mu) \rangle_{\mathcal X}
  &=\langle  (j_{0})_*\mathcal C, j_1^*\mathcal Z(m, \mu) \rangle_{\mathcal Y_0(1)}
  \\
   &= \sum_{\substack{ \mu_1 \in P_1'/P_1,\, \mu_2 \in
M'/M \\ \mu_1 + \mu_2 \equiv \mu \pmod L \\ m_1 +m_2 =m,\, m_i \ge
0}} r_{P_1}(m_1, \mu_1) \langle \mathcal Z(-\frac{D}4, \frac{D}2 ),
\mathcal Z(m_2, \mu_2) \rangle_{\mathcal Y_0(1)}
\\
 &=c\sum_{\substack{ \mu_1 \in P_1'/P_1,\,
\mu_2 \in M'/M \\ \mu_1 + \mu_2 \equiv \mu \pmod L \\ m_1 +m_2 =m,\,
m_i \ge 0}} r_{P_1}(m_1, \mu_1) \sum_{\substack{ \mu_3 \in
P_2'/P_2,\, \mu_4 \in N'/N\\ \mu_3 + \mu_4 \equiv \mu \pmod M \\ m_3
+m_4 =m_2,\, m_i \ge 0}} r_{P_2}(m_3, \mu_3) \kappa_N(m_4, \mu_4)
\\
 &=c\sum_{\substack{ \mu_1 \in (P_1+P_2)'/(P_1+P_2),\,
\mu_2 \in N'/N \\ \mu_1 + \mu_2 \equiv \mu \pmod L \\ m_1 +m_2 =m,\,
m_i \ge 0}} r_{P_1+P_2}(m_1, \mu_1) \kappa_N(m_2, \mu_2).
\end{align*}
Here $c=-\frac{2}{\vol(K_T)} = -\frac{2h_D}{w_D}$ as in Lemma
\ref{lemY6.1}. Since
$$
P_1 \oplus P_2 \oplus N \subset P \oplus N
\subset L \subset L' \subset P' \oplus N' \subset (P_1 \oplus P_2)'
\oplus N',
$$
it is easy to see that for $\mu_1 \in (P_1 \oplus P_2)'$ and $\mu_2
\in N'$, the condition $\mu_1 + \mu_2 \in L'$ implies that $\mu_1 \in P'$. So we
 have proved Conjecture \ref{conj4.8} in this special
case, which we state as a theorem.

\begin{theorem}
\label{thm:n=2}
Let $F=\mathbb Q(\sqrt\Delta)$ be a real quadratic
field with prime discriminant $\Delta \equiv 1 \pmod{4}$, and let
$\mathcal X$ be the associated Hilbert modular surface. Let $U$ be
as above, and assume that $m >0$ is not a square.  Then
$$
\langle \mathcal Z(U), \mathcal Z(m, \mu) \rangle_{fin}
 = -\frac{2h_D}{w_D}\sum_{\substack{ \mu_1 \in P'/P,\,
\mu_2 \in N'/N \\ \mu_1 + \mu_2 \equiv \mu \pmod L \\ m_1 +m_2 =m,\,
m_i \ge 0}} r_{P}(m_1, \mu_1) \kappa_N(m_2, \mu_2)
$$
is $-\frac{2h_D}{w_D}$ times the $(m, \mu)$-th Fourier coefficient
of $\theta_P(\tau) \otimes  \mathcal E_N(\tau)$.
\end{theorem}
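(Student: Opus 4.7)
The plan is to reduce the computation of $\langle \mathcal Z(U), \mathcal Z(m,\mu)\rangle_{fin}$ on the Hilbert modular surface $\mathcal X$ to the analogous intersection on the modular curve $\mathcal Y_0(1)$ treated in Section~\ref{sect7}, using the factorization $j = j_1 \circ j_0 : \mathcal C \to \mathcal Y_0(1) \to \mathcal X$ whose image is $\mathcal Z(U)$. Since $\mathcal Z(U) = j_*\mathcal C = (j_1)_* (j_0)_*\mathcal C$ and $j_1$ is a closed immersion integrally by \cite[Lemma~2.2]{Yam=1}, the projection formula gives
\begin{equation*}
\langle \mathcal Z(U), \mathcal Z(m,\mu)\rangle_{\mathcal X} = \langle (j_0)_*\mathcal C,\; j_1^*\mathcal Z(m,\mu)\rangle_{\mathcal Y_0(1)},
\end{equation*}
provided that $j_1^*\mathcal Z(m,\mu)$ is well-defined as a cycle on $\mathcal Y_0(1)$, i.e., that $Z(e_1,1) = Y_0(1)$ does not appear as a component of $\mathcal Z(m,\mu)$. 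This last requirement is exactly where the hypothesis that $m$ is not a square is used.

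First I would verify, by an elementary computation on cosets exactly analogous to the one used to prove \eqref{eq8.7} on the complex fiber, that
\begin{equation*}
j_1^*\mathcal Z(m,\mu) = \sum_{\substack{\mu_1 \in P_1'/P_1,\,\mu_2 \in M'/M \\ \mu_1+\mu_2 \equiv \mu\;(L) \\ m_1+m_2=m,\, m_i\ge 0}} r_{P_1}(m_1,\mu_1)\,\mathcal Z(m_2,\mu_2)_{\mathcal Y_0(1)},
\end{equation*}
where the identity of flat closures is inherited from the corresponding analytic identity because both sides are flat over $\operatorname{Spec}\Z$ and agree on the generic fiber. Next I would apply Theorem~\ref{theoY7.11} to each term $\langle (j_0)_*\mathcal C, \mathcal Z(m_2,\mu_2)_{\mathcal Y_0(1)}\rangle$, noting that $(j_0)_*\mathcal C = \mathcal Z(-D/4,D/2)$ (the relevant CM cycle on $\mathcal Y_0(1)$, in the notation of Section~\ref{sect7}), and that the hypothesis $(D,2\Delta)=1$ together with the non-square condition on $m$ guarantees that the fundamental discriminants $D$ and $-4m_2$ (resp.\ the modular data) are compatible with the assumptions of Theorem~\ref{theoY7.11}. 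This produces a nested sum
\begin{equation*}
c \sum_{m_1+m_2=m}\sum_{\mu_1,\mu_2} r_{P_1}(m_1,\mu_1)\!\!\sum_{m_3+m_4=m_2}\sum_{\mu_3,\mu_4} r_{P_2}(m_3,\mu_3)\,\kappa_N(m_4,\mu_4),
\end{equation*}
with $c=-2h_D/w_D$ provided by Lemma~\ref{lemY6.1}.

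Finally, I would collapse the two inner convolutions using the product formula $\theta_{P_1}\otimes \theta_{P_2} = \theta_{P_1\oplus P_2}$ together with the trace map $\operatorname{tr}_{P/(P_1\oplus P_2)}$ from Lemma~\ref{sublattice}. Concretely, since $P_1\oplus P_2$ sits inside $P$ with finite index and $N$ is orthogonal to $P$, the combined sum equals the $(m,\mu)$-th Fourier coefficient of $\theta_{P_1\oplus P_2}\otimes \mathcal E_N$, and replacing $\theta_{P_1\oplus P_2}$ by $\theta_P$ is precisely the content of the observation (made at the end of the excerpt) that for $\mu_1\in (P_1\oplus P_2)'$ and $\mu_2 \in N'$ the condition $\mu_1+\mu_2\in L'$ forces $\mu_1 \in P'$, so the extra cosets contribute zero. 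The main obstacle I anticipate is not the lattice bookkeeping, which is formal, but rather the careful verification that the integral pullback formula for $j_1^*\mathcal Z(m,\mu)$ holds on $\mathcal X$ — in particular, that the flat closures of Hirzebruch--Zagier-type divisors on the Hilbert modular surface restrict correctly to the diagonal modular curve. For this one must invoke Yamauchi's integral description of the diagonal embedding, and confirm that the characteristic-$p$ contributions to the intersection factor through the $n=1$ case on $\mathcal Y_0(1)$ without loss.
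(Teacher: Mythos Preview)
Your approach is essentially identical to the paper's: the same factorization $j=j_1\circ j_0$, the same projection formula reduction to $\mathcal Y_0(1)$, the same integral pullback formula inherited from \eqref{eq8.7} via flat closures, the same application of Theorem~\ref{theoY7.11}, and the same lattice observation that $\mu_1+\mu_2\in L'$ with $\mu_2\in N'$ forces $\mu_1\in P'$. One minor correction: the integral description of $j_1$ you cite as ``Yamauchi's'' is due to T.~H.~Yang (reference \cite{Yam=1}); the label is a bibliographic key, not an author name.
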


As discussed in Section \ref{sect:fal}, this implies Conjecture
\ref{conjY5.2}. We also remark that the $L$-series  $L(\xi(f), U,
s)$ is the Rankin-Selberg $L$-function of a cusp  form of weight
$2$, level $\Delta$ and {\it non-trivial} Nebentypus $\chi_{\Delta}$
with a theta function of weight $1$.  This is new in the sense that
it is associated to the Jacobian of $X_1(\Delta)$.

\end{document}